\documentclass[a4paper, 11pt]{article}

\usepackage[francais,english]{babel}
\usepackage[latin1]{inputenc}
\usepackage[T1]{fontenc}
\usepackage{ae,aecompl,aeguill}
\usepackage[dvips,final]{graphicx}
\usepackage{amssymb,amsfonts,amsthm,amsmath}
\usepackage{geometry}
\usepackage{multirow}

\usepackage{babelbib}
\bibliographystyle{babplai3-fl}

\newtheorem{thm}{Théorème}[section]
\newtheorem{lem}[thm]{Lemme}
\newtheorem{prop}[thm]{Proposition}
\newtheorem{coro}[thm]{Corollaire}
\newtheorem{ex}[thm]{Exemple}
\newtheorem{rem}[thm]{Remarque}

\def\matg{{\left(\begin{smallmatrix} a & b\\c & d\end{smallmatrix}\right)}}
\newcommand\mat[4]{{\left(\begin{smallmatrix} #1 & #2\\#3 & #4\end{smallmatrix}\right)}}

\def\Z{{\mathbb{Z}}}
\def\C{{\mathbb{C}}}
\def\R{{\mathbb{R}}}
\def\Q{{\mathbb{Q}}}

\def\pte{{\mathbb{P}^1(\mathbb{Q})}}
\def\H{{\mathbb{H}}}
\def\G{{SL_2(\Z)}}

\renewcommand{\Re}{\mathrm{Re}}
\renewcommand{\d}{\, \mathrm{d}}
\newcommand{\ra}{\Rightarrow}

\def\Hom{{\mathrm{Hom}}}
\renewcommand{\dim}{\mathrm{dim}}
\renewcommand{\Im}{\mathrm{Im}}
\def\Ker{{\mathrm{Ker}}}
\def\Per{{\mathrm{Per}}}
\def\tr{{\mathrm{tr}}}
\def\harm{{\mathrm{harm}}}

\setlength{\textwidth}{170 mm}
\setlength{\hoffset}{0mm}
\setlength{\voffset}{0mm}
\setlength{\textheight}{255mm}
\setlength{\evensidemargin}{5mm}
\setlength{\oddsidemargin}{5mm}
\setlength{\topmargin}{-10mm}
\setlength{\headheight}{0mm}
\setlength{\headsep}{0mm}
\geometry{a4paper,margin=.8in}

\title{Relations de Manin d'ordre $2$ vérifiées par le polynôme des bi-périodes d'un couple de formes modulaires
}
\author{Nicolas Provost
\protect\footnote{Cet article fait suite à un travail de recherche d'étude doctorale effectué sous la direction de Loïc Merel à l'université Paris VII - Denis Diderot.}
}
\begin{document}

\selectlanguage{francais}

\maketitle

\abstract{
Les relations de Manin caractérisent principalement les polynômes des périodes d'une forme modulaire. Nous proposons dans cet article un nombre fini de relations d'ordre $2$ vérifiées par le polynôme des bi-périodes d'un couple de formes. En construisant une partie Eisenstein d'ordre $2$, nous précisons l'écart entre ces polynômes des bi-périodes et ceux annulés par les relations établies. En donnant une description calculatoire de cette partie, nous démontrons un résultat d'irrationalité du quotient des périodes pour certains poids.
}

\selectlanguage{english}

\abstract{
Manin's relations characterize period polynomials of a modular form. In this paper, we propose a finite set of relations checked by bi-period polynomials of a couple of forms. Then we construct an Eisenstein part of depth $2$ which essentially completed the set of bi-periods polynomials among polynomials canceled by the relations. By giving a computational description of that part, we prove a result of irrationality of ratio of the periods for some weights.
}

\selectlanguage{francais}


\section{Introduction}

Manin \cite{Ma1} introduit des intégrales itérées d'une famille de formes modulaires. Il montre que ces intégrales, pouvant être vu comme des périodes multiples, sont liées entre elles par des relations de mélange ainsi que des relations modulaires.
Le polynôme des périodes permet dans le cas classique, voir Zagier \cite{Za91}, de structurer l'ensemble de ces valeurs pour une forme.
Les relations de Manin permettent alors de décrire l'espace contenant ce polynôme, voir Haberland \cite{Hab83}.
En considérant une généralisation du polynôme des périodes pour deux formes modulaires, on va étudier les relations qu'ils vérifient.
Les résultats de cet article sont ainsi un prolongement des travaux de Manin \cite{Ma2}, Eichler \cite{Eich57} et Shimura \cite{Sh59} pour le polynôme des bi-périodes associé à un couple de formes modulaires.\par

Notons $\Gamma=P\G$ le groupe modulaire. Il agit sur le demi-plan de Poincaré $\H$.

Soit $k\geq 4$ un entier pair. Considérons $S_k$ l'ensemble des formes holomorphes modulaires paraboliques pour $\Gamma$ de poids $k$. 

Notons $w=k-2$ et $V_w^{\Q}=\Q_{w}[X]$ l'anneau des polynômes à coefficients rationnels et de degré au plus $w$.
De plus, notons simplement $V_w=V_w^{\Q}\otimes\C$ le $\C$-espace vectoriel muni de la $\R$-structure donné par $V_w^{\Q}\otimes\R$. Il possède donc une conjugaison complexe.
Tout élément $\gamma=\pm\matg\in\Gamma$ agit à gauche sur $V_w^{\Q}$ par : 
\begin{equation}
\gamma. P(X)=P|_{\gamma^{-1}}(X)=(-cX+a)^{w}P\left(\frac{\phantom{-}dX-b}{-cX+a}\right).
\end{equation}

Pour $f\in S_k$, considérons le polynôme des périodes $P_f\in V_w$ défini par:
\begin{equation}
P_f(X)=\int_0^{\infty}f(it)(X-it)^{w}i\d t=\sum_{m=0}^{w}\binom{w}{m}\Lambda(f,m+1)\frac{X^{w-m}}{i^m}.
\end{equation}
Les \textit{périodes} sont les valeurs aux entiers critiques $1\leq n\leq k-1$ du prolongement holomorphe de la fonction $\Lambda(f,s)=\int_0^{\infty} f(it) t^{s-1}\d t$ définie pour $\Re(s)>k/2+1$.

Les relations de Manin définissent l'idéal de $\Z[\Gamma]$:
\begin{equation}
\mathcal{I}_1=\left\langle 1+S,1+U+U^2\right\rangle,\quad
\text{avec }S=\pm\left(\begin{smallmatrix} 0 & -1\\ 1 & \phantom{-}0\end{smallmatrix}\right)
\text{ et }U=\pm\left(\begin{smallmatrix} \phantom{-}0 & 1\\-1 & 1\end{smallmatrix}\right).
\end{equation}

Notons $W_w^{\Q}=V_w^{\Q}[\mathcal{I}_1]$, l'espace des polynômes annulés par cet idéal. Il contient un élément distingué $P_{G_{w+2}}^+=1-X^{w}$ fourni par la partie paire du polynôme des périodes de la série d'Eisenstein $G_{w+2}$ de poids $w+2$. Notons $\Per_w=\{P_f\text{ pour }f\in S_{w+2}\}$ l'ensemble des polynômes des périodes et $W_w$ l'extension de $W_w^{\Q}$ au corps des complexes.

Le théorème d'Eichler-Shimura \cite{Eich57,Sh59} donne la décomposition en somme directe de $\C$-espaces vectoriels par:
\begin{equation}\label{Wsom}
W_{w}=\Per_w\oplus \overline{\Per_w}\oplus E_w.
\end{equation}

Ainsi $W_w^{\Q}$ est le plus petit $\Q$-espace vectoriel contenant $E_{w}^{\Q}=\Q (1-X^{w})$ et dont l'extension au corps des complexes contienne $\Per_w$.

Soient $w_1,w_2\geq 2$ des entiers pairs.

Notons $V_{w_1,w_2}^{\Q}=\Q_{w_1,w_2}[X_1,X_2]$ l'espace des polynômes en deux indéterminées $X_1$ et $X_2$, à coefficients rationnels et de degrés en $X_1$ et $X_2$ respectivement bornés par $w_1$ et $w_2$. On l'identifiera librement à $V_{w_1}^{\Q}\otimes V_{w_2}^{\Q}$.

De plus, notons simplement $V_{w_1,w_2}=V_{w_1,w_2}^{\Q}\otimes\C$. Ce dernier hérite d'une $\R$-structure donnée par $V_{w_1,w_2}^{\Q}\otimes\R$. 

Le groupe $\Gamma^2$ opère diagonalement à gauche sur $V_{w_1,w_2}^{\Q}$. 
Pour $\gamma_1=\pm\mat{a_1}{b_1}{c_1}{d_1}, \gamma_2=\pm\mat{a_2}{b_2}{c_2}{d_2} \in\Gamma$ et $P\in V_{w_1,w_2}^{\Q}$, on note:
\begin{multline}
(\gamma_1,\gamma_2). P(X_1,X_2)=P(X_1|_{\gamma_1^{-1}},X_2|_{\gamma_2^{-1}})\\
=(-c_1X_1+a_1)^{w_1}(-c_2X_2+a_2)^{w_2}P\left(\frac{\phantom{-}d_1X_1-b_1}{-c_1X_1+a_1},\frac{\phantom{-}d_2X_2-b_2}{-c_2X_2+a_2}\right).
\end{multline}

Soit $(f_1,f_2)\in S_{w_1+2}\times S_{w_2+2}$. Le \textit{polynôme des bi-périodes} $P_{f_1,f_2}\in V_{w_1,w_2}$ du couple des formes $f_1$ et $f_2$ est défini par:
\begin{align}
P_{f_1,f_2}&(X_1,X_2)=\int_{0<t_1<t_2} f_1(it_1)f_2(it_2)(X_1-it_1)^{w_1}(X_2-it_2)^{w_2}i\d t_1i\d t_2\\
&=\sum_{m_1=0}^{w_1}\sum_{m_2=0}^{w_2}\binom{w_1}{m_1}\binom{w_2}{m_2}\Lambda(f_1,f_2;m_1+1,m_2+1)\frac{X_1^{w_1-m_1}X_2^{w_2-m_2}}{i^{m_1+m_2}}.
\end{align}
Les \textit{bi-périodes} sont les valeurs aux couples d'entiers $(n_1,n_2)$ tels que $1\leq n_j\leq k_j-1$ du prolongement analytique à $\C^2$ de l'application définie pour $\Re(s_j)>k_j$ par:
\begin{equation}
\Lambda(f_1,f_2;s_1,s_2)=\int_{0<t_1<t_2} f_1(it_1)f_2(it_2)t_1^{s_1-1}t_2^{s_2-1}\d t_1\d t_2.
\end{equation}

Introduisons $\Per_{w_1,w_2}$ le sous-$\C$-espace vectoriel de $V_{w_1,w_2}$ engendré par les polynômes des bi-périodes, défini par:
\begin{equation}
\Per_{w_1,w_2}=\big\langle P_{f_1,f_2}\text{ tel que }(f_1,f_2)\in S_{w_1+2}\times S_{w_2+2}\big\rangle.
\end{equation}

Définissons un idéal à gauche $\mathcal{I}_2$ de $\Z[\Gamma^2]$ engendré par :
\begin{align}
&(1+S,1+S),
(S,S)+(S,US)+(US,US)+(1,U)-(U^2,U^2),\label{defI2}\\
&(1+U+U^2,1)[(1,1)+(S,S)]\text{ et }
(1,1+U+U^2)[(1,1)+(S,S)].\nonumber
\end{align}
Nous nommerons ces éléments les \textit{relations de Manin d'ordre }$2$.

Considérons le sous-module de $V_{w_1,w_2}^{\Q}$ des polynômes annulés par cet idéal:
\begin{equation}
W_{w_1,w_2}^{\Q}=\big\{P\in V_{w_1,w_2}^{\Q}\text{ tel que }g.P=0,\, \forall g\in\mathcal{I}_2\big\}.
\end{equation}

\begin{prop}\label{prop1}
On a : $\Per_{w_1,w_2}\subset W_{w_1,w_2}=W_{w_1,w_2}^{\Q}\otimes\C$.
\end{prop}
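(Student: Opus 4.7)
The statement decomposes into two parts. The equality $W_{w_1,w_2} = W_{w_1,w_2}^{\Q} \otimes \C$ is formal: since $\mathcal{I}_2$ admits generators in $\Z[\Gamma^2]$, the annihilator is defined by $\Q$-linear equations, and scalar extension commutes with taking the kernel.

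For the inclusion $\Per_{w_1,w_2} \subset W_{w_1,w_2}$, I would verify that each of the four generators annihilates every bi-period polynomial $P_{f_1,f_2}$. Setting $\omega_j = f_j(it_j)(X_j - it_j)^{w_j}\, i\, \d t_j$, one writes $P_{f_1,f_2} = \int_{0<t_1<t_2} \omega_1 \omega_2$. For each $(\gamma_1, \gamma_2) \in \Gamma^2$, the action $(\gamma_1, \gamma_2).P_{f_1,f_2}$ is computed by substituting $X_j \mapsto \gamma_j X_j$ in the integrand, followed by the natural change of variable $it_j \mapsto \gamma_j^{-1}(it_j)$ in each factor. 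Modularity of $f_j$ together with the parity of $w_j$ ensures that each $\omega_j$ is preserved up to a sign depending only on $\gamma_j$ ($-1$ for $S$, $+1$ for $U$), so that every term of a generator becomes an integral of $\omega_1 \omega_2$ over a transformed subregion of $\R_{>0}^2$. The identity $g.P_{f_1,f_2} = 0$ then reduces to the geometric statement that the signed sum of the characteristic functions of these regions vanishes.

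The first relation $(1+S, 1+S)$ is verified by observing that the curves $t_1 = t_2$ and $t_1 t_2 = 1$ partition the quadrant into four pieces, and the four transforms produce the regions $\{t_1<t_2\}, \{t_1 t_2>1\}, \{t_1 t_2<1\}, \{t_2<t_1\}$ with respective signs $+,-,-,+$. Since the first and fourth are complementary, as are the second and third, the signed sum cancels. The two relations $(1+U+U^2,1)\bigl[(1,1)+(S,S)\bigr]$ and $(1,1+U+U^2)\bigl[(1,1)+(S,S)\bigr]$ reduce to the classical single-variable Manin identity: the operator $(1,1)+(S,S)$ completes the integration region from $\{t_1<t_2\}$ to the full quadrant $\R_{>0}^2$, so that $\bigl[(1,1)+(S,S)\bigr].P_{f_1,f_2}$ factors as $P_{f_1}(X_1) \cdot P_{f_2}(X_2)$; applying $(1+U+U^2)$ in one variable then annihilates the corresponding factor by the order $1$ relation.

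The main obstacle will be the asymmetric mixed relation $(S,S)+(S,US)+(US,US)+(1,U)-(U^2,U^2)$. Here the five translates of $\{0<t_1<t_2\}$ couple the inversion $S$ with the three-cycle $U$ and interact non-trivially with the ordering constraint, so the corresponding regions are no longer delimited by a single pair of curves as in the first relation. Verifying that the signed sum of their characteristic functions vanishes will require a careful geometric analysis, which I expect to carry out by subdividing $\{0<t_1<t_2\}$ along preimages of the Farey tessellation under the $\gamma_j$ and matching the resulting sub-simplices with the five prescribed translates. This combinatorial identity, going genuinely beyond a product of order $1$ relations, carries the essential novelty of the order $2$ relations.
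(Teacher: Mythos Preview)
Your direct-verification strategy is sound for three of the four generators, but it diverges substantially from the paper's argument and leaves a genuine gap on the mixed relation. The paper does not check the relations one by one against the integral; instead it introduces a homological ideal $\mathcal{J}_2=\{g\in\Z[\Gamma^2]:g.\delta_2\tau_2\in H^0+V^0+D^0\}$, where $H,V,D$ are the transverse sub-modules of $M_1^{pte}(\H^2,\Z)$ coming from the maps $\varphi_0(z)=(z,\infty)$, $\varphi_1(z)=(z,z)$, $\varphi_2(z)=(0,z)$. One then shows in one stroke (Stokes' theorem together with $\varphi_j^*\omega_{f_1,f_2}=0$ and $H_2(\H^2)=0$) that every $g\in\mathcal{J}_2$ annihilates $P_{f_1,f_2}$, and finally proves $\mathcal{I}_2=\mathcal{J}_2$ by exhibiting, for each generator, an explicit algebraic decomposition inside each of $I_H$, $I_V$, $I_D$. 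For the mixed relation this decomposition reads for instance $(S,US)+(1,U)=(1,U)[(1,1)+(S,S)]$ and $(S,S)+(US,US)-(U^2,U^2)=[(1,1)+(U,U)][(1,1)+(S,S)]-[(1,1)+(U,U)+(U^2,U^2)]$ in $I_D$, with analogous identities in $I_H$ and $I_V$.

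The concrete problem with your plan for the mixed relation is that your ``region in $\R_{>0}^2$'' picture relies on each $\gamma_j$ preserving the imaginary axis. This holds for $S$ (hence your treatment of $(1+S,1+S)$ and of $(1,1)+(S,S)$ as a prefactor is fine), but $U$ sends $it$ to $(1+it)/(1+t^2)$, which is not purely imaginary; the translated simplex $(1,U).\tau_2$ has the cusp $(0,1)$ as a vertex and cannot be homotoped into the product of imaginary axes. Thus the five terms of the mixed relation are $2$-chains in $\H^2$ that do not project to planar regions at all, and no ``signed characteristic function'' identity in $(t_1,t_2)$-space is available. What actually makes the integral vanish is that $g.\tau_2$ lies in $M_2^0(\H^2,\Z)$: its boundary is a sum of \emph{closed} transverse pieces, each of which can be filled in by a degenerate $2$-chain lying in the image of some $\varphi_j$ (on which $\omega_{f_1,f_2}$ pulls back to zero), leaving a closed $2$-chain that bounds in the contractible $\H^2$. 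Your Farey-subdivision idea does not supply this mechanism; you would need to either reproduce the transverse-boundary framework or find the paper's algebraic identities directly.
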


Définissons le sous-espace de $V_{w_1,w_2}^{\Q}$ des polynômes annulés par les relations de Manin diagonales par:
\begin{equation}\label{defVID}
V_{w_1,w_2}^{\Q}[I_D]=\big\{P\in V_{w_1,w_2}^{\Q}\text{ tel que }[(1,1)+(S,S)].P=[(1,1)+(U,U)+(U^2,U^2)].P=0\big\}.
\end{equation}

Par analogie avec le cas d'une forme nécessitant les formes non paraboliques, définissons la partie Eisenstein d'ordre $2$ de $W_{w_1,w_2}^{\Q}$ par:
\begin{equation}\label{defEkk}
E_{w_1,w_2}^{\Q}=\left(W_{w_1}^{\Q}\otimes 1\right)+ V_{w_1,w_2}^{\Q}[I_D]+ \left(X_1^{w_1}\otimes W_{w_2}^{\Q}\right).
\end{equation}

\begin{thm}\label{thm1}
L'espace vectoriel $W_{w_1,w_2}^{\Q}$ est le plus petit sous-$\Q$-espace-vectoriel de $V_{w_1,w_2}^{\Q}$ contenant $E_{w_1,w_2}^{\Q}$ tel que son extension au corps des complexes contient $\Per_{w_1,w_2}$.
\end{thm}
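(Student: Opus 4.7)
The strategy is to reduce the theorem to the complexified decomposition
\begin{equation*}
W_{w_1,w_2} \;=\; \Per_{w_1,w_2} + \overline{\Per_{w_1,w_2}} + E_{w_1,w_2}^{\Q}\otimes\C, \qquad (\star)
\end{equation*}
after which minimality over $\Q$ will follow from a complex-conjugation argument. I would carry out the plan in three stages.

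First, I would verify $E_{w_1,w_2}^{\Q}\subset W_{w_1,w_2}^{\Q}$ by direct computation on the three summands of (\ref{defEkk}) against each of the four generators of $\mathcal{I}_2$. For polynomials of the form $Q(X_1)\otimes 1$ and symmetrically $X_1^{w_1}\otimes Q(X_2)$ with $Q\in W_{w_i}^{\Q}$, the vanishing reduces to the one-variable Manin relations $(1+S).Q=0$ and $(1+U+U^2).Q=0$ applied in the appropriate factor, combined with small identities in $\Z[\Gamma^2]$. For elements of $V_{w_1,w_2}^{\Q}[I_D]$, the four generators of $\mathcal{I}_2$ are rewritten as left multiples of the diagonal relations $(1,1)+(S,S)$ and $(1,1)+(U,U)+(U^2,U^2)$ from (\ref{defVID}). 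Combined with Proposition \ref{prop1} and the stability of $W_{w_1,w_2}$ under the complex conjugation inherited from $V_{w_1,w_2}^{\Q}\otimes\R$ (since $\mathcal{I}_2$ has integer entries), this yields the easy inclusion $\Per_{w_1,w_2}+\overline{\Per_{w_1,w_2}}+E_{w_1,w_2}^{\Q}\otimes\C\subset W_{w_1,w_2}$.

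The main obstacle is the reverse inclusion in $(\star)$, which is the two-variable analogue of the Eichler-Shimura theorem (\ref{Wsom}) and would require a dimension count. On one side, $\dim_{\C}\Per_{w_1,w_2}=(\dim S_{w_1+2})(\dim S_{w_2+2})$ follows from linear independence of bi-period polynomials of pairs of Hecke eigenforms, and the intersections $\Per_{w_1,w_2}\cap\overline{\Per_{w_1,w_2}}$ and $(\Per_{w_1,w_2}+\overline{\Per_{w_1,w_2}})\cap E_{w_1,w_2}$ vanish by restricting to each Hecke component and invoking the one-variable decomposition (\ref{Wsom}). On the other side, $\dim_{\Q}W_{w_1,w_2}^{\Q}$ is computed directly from the kernel of the map $V_{w_1,w_2}^{\Q}\to\bigoplus_g V_{w_1,w_2}^{\Q}$ given by the action of the generators of $\mathcal{I}_2$. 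The critical combinatorial step is then to show that the counts $2\dim_{\C}\Per_{w_1,w_2}+\dim_{\Q}E_{w_1,w_2}^{\Q}$ and $\dim_{\Q}W_{w_1,w_2}^{\Q}$ agree, which requires a careful inclusion-exclusion on the three summands of (\ref{defEkk}); the delicate point is to describe the intersections of the tensor factors $W_{w_1}^{\Q}\otimes 1$ and $X_1^{w_1}\otimes W_{w_2}^{\Q}$ with $V_{w_1,w_2}^{\Q}[I_D]$, where the distinguished one-variable Eisenstein polynomials $1-X_i^{w_i}$ play a decisive role.

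With $(\star)$ in hand, minimality is formal. Let $F\subset V_{w_1,w_2}^{\Q}$ be any $\Q$-subspace containing $E_{w_1,w_2}^{\Q}$ and satisfying $\Per_{w_1,w_2}\subset F\otimes\C$. Since $F$ has a $\Q$-structure, $F\otimes\C$ is stable under complex conjugation, so it also contains $\overline{\Per_{w_1,w_2}}$; invoking $(\star)$ gives $F\otimes\C\supset W_{w_1,w_2}$, whence $F\supset W_{w_1,w_2}^{\Q}$. Combined with the first stage and Proposition \ref{prop1}, this establishes $W_{w_1,w_2}^{\Q}$ as the smallest such space.
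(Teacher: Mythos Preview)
Your overall architecture is sound, and your minimality argument in Stage~3 via complex conjugation is correct and in fact cleaner than the paper's route through Burnside's theorem (Proposition~\ref{propnew}). Stage~1 is also fine, though the verification that the four generators of $\mathcal{I}_2$ act as left $\Z[\Gamma^2]$-multiples of the diagonal relations is not as mechanical as you suggest; see the explicit identities in the proof of Theorem~\ref{thm3}.

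The genuine gap is Stage~2. Your proposed dimension count requires an independent closed-form computation of $\dim_{\Q}W_{w_1,w_2}^{\Q}$ for arbitrary $(w_1,w_2)$, and you offer no mechanism for this beyond ``kernel of a linear map''. That kernel is precisely the object whose size the theorem is meant to determine; there is no a~priori combinatorial formula for it, and the numerical table in the paper confirms that these dimensions were obtained case by case, not by a general expression. Likewise, your claimed vanishing of $(\Per_{w_1,w_2}+\overline{\Per_{w_1,w_2}})\cap E_{w_1,w_2}$ by ``restricting to each Hecke component and invoking~(\ref{Wsom})'' does not go through: the one-variable decomposition controls the image under $\Phi_S$, not membership in $E_{w_1,w_2}$ itself.

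The paper bypasses all of this. Instead of a dimension count, it proves the reverse inclusion of $(\star)$ conceptually: using the non-degenerate pairing between $\Omega_{k_1,k_2}$ and $M_2^{\tr}(\H^2,\Z)$ together with Proposition~\ref{incljkk}, it shows that every $P\in W_{w_1,w_2}$ is realised as $\int_{\tau_2}\omega$ for some $\omega\in\Omega_{k_1,k_2}$. The Hodge-type decomposition of Lemma~\ref{lem1} then writes $\omega=\omega_++\omega_-+\omega_0$ with $\omega_0$ exact, and integrating each piece over $\tau_2$ lands respectively in $\Per_{w_1,w_2}$, $\overline{\Per_{w_1,w_2}}$, and $E_{w_1,w_2}$ (the last via Stokes on $\partial\tau_2$ decomposed along the transverse pieces $\varphi_j(\tau_1)$). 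This is the missing idea: the equality $(\star)$ is an analytic statement about differential forms on $\Gamma^2\backslash\H^2$, not a combinatorial one about the generators of $\mathcal{I}_2$.
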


La suite exacte de $\Q$-espaces vectoriels suivantes précise l'écart qu'il existe entre les espaces $E_{w_1,w_2}^{\Q}$ et $W_{w_1,w_2}^{\Q}$:
\begin{equation}\label{suiteex}
0\to E_{w_1,w_2}^{\Q} \to W_{w_1,w_2}^{\Q} \stackrel{\Phi_{S}}{\longrightarrow} W_{w_1}^{\Q}/E_{w_1}^{\Q} \otimes W_{w_2}^{\Q}/E_{w_2}^{\Q},
\end{equation}
avec $\Phi_{S} : P\mapsto\left[[(1,1)+(S,S)].P\right]$.

L'application $\Phi_S$ n'est pas toujours surjective. Dans les cas de dimension raisonnable ($w\leq 24$), le calcul numérique montre que $\Phi_S$ est surjective sauf lorsque $w_1=w_2$ et $S_{w_1+2}\neq\{0\}$. La généralisation pour des poids quelconques n'est pas démontré dans cet article. Toutefois, la surjectivité observée numériquement couplé au théorème \ref{thm1} permettent d'obtenir un résultat sur l'irrationalité des périodes.

Pour $w\in\{10, 14, 16, 18, 20, 24\}$, l'espace $S_{w+2}$ est de dimension $1$ et est engendré par une forme modulaire parabolique de Hecke $f_w$. La théorie de Eichler-Shimura \cite{Eich57,Sh59} permet d'écrire : 
\begin{equation}
P_{f_w}(X)=\Omega_w^{+}P_w^+(X)+i\Omega_w^{-}P_w^-(X),
\end{equation} 
avec $P_w^+(X),P_w^-(X)\in V_w^{\Q}$ respectivement pair et impair. Les réels $\Omega_w^+$ et $\Omega_w^-$ sont appelés les périodes de la forme $f_w$. Leur rapport $r_w=\frac{\Omega_w^+}{\Omega_w^-}$ ne dépend alors que du poids $w$.

\begin{thm}\label{thm2}
Soient $w_1,w_2$ des poids tels que $S_{w_1+2}$ et $S_{w_2+2}$ sont de dimension $1$. 
Alors le produit $r_{w_1}r_{w_2}$ est irrationnel.
De plus si $w_1\neq w_2$ alors le rapport $r_{w_1}/r_{w_2}$ est irrationnel.
\end{thm}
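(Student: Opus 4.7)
Mon plan consiste � calculer explicitement $\Phi_S(P_{f_{w_1},f_{w_2}})$ dans le quotient $W_{w_1}^{\Q}/E_{w_1}^{\Q} \otimes W_{w_2}^{\Q}/E_{w_2}^{\Q}$, � l'exprimer comme combinaison explicite des tenseurs $P_{w_1}^{\epsilon_1} \otimes P_{w_2}^{\epsilon_2}$ pond�r�s par les produits $\Omega_{w_1}^{\epsilon_1}\Omega_{w_2}^{\epsilon_2}$, puis � invoquer le th�or�me~\ref{thm1} conjointement � la surjectivit� observ�e num�riquement de $\Phi_S$ pour en d�duire l'ind�pendance $\Q$-lin�aire de ces coefficients.

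D'abord, en partant de la d�finition int�grale de $P_{f_1,f_2}$, le changement de variable $t_j\mapsto 1/t_j$ combin� � la transformation modulaire $f_j(i/t)=t^{k_j}f_j(it)$ doit produire l'identit� $(S,S).P_{f_1,f_2}(X_1,X_2) = i^{w_1+w_2}\,P_{f_2,f_1}(X_2,X_1)$. Combin�e � la relation de m�lange $P_{f_1,f_2}(X_1,X_2)+P_{f_2,f_1}(X_2,X_1) = P_{f_1}(X_1)P_{f_2}(X_2)$, elle fournit, modulo la partie Eisenstein, la formule $\Phi_S(P_{f_1,f_2}) \equiv P_{f_1}(X_1)\,P_{f_2}(X_2)$ lorsque $w_1+w_2 \equiv 0 \pmod 4$. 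Le cas $w_1+w_2 \equiv 2 \pmod 4$ fera intervenir un terme suppl�mentaire proportionnel � $P_{f_1,f_2}$ lui-m�me; la proposition~\ref{prop1} assure cependant que ce terme est absorb� dans le quotient, si bien que l'analyse se r�duit encore au produit des polyn�mes des p�riodes.

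Ensuite, la d�composition d'Eichler--Shimura $P_{f_w}=\Omega_w^+ P_w^+ + i\Omega_w^- P_w^-$ fournit
\[
P_{f_{w_1}}P_{f_{w_2}} = \Omega^+_{w_1}\Omega^+_{w_2}\, P^+_{w_1}\!\otimes P^+_{w_2} - \Omega^-_{w_1}\Omega^-_{w_2}\, P^-_{w_1}\!\otimes P^-_{w_2} + i\bigl(\Omega^+_{w_1}\Omega^-_{w_2}\, P^+_{w_1}\!\otimes P^-_{w_2} + \Omega^-_{w_1}\Omega^+_{w_2}\, P^-_{w_1}\!\otimes P^+_{w_2}\bigr).
\]
Puisque $\dim S_{w_i+2}=1$, les quatre tenseurs $P_{w_1}^{\epsilon_1}\otimes P_{w_2}^{\epsilon_2}$ forment une $\Q$-base de $W_{w_1}^{\Q}/E_{w_1}^{\Q} \otimes W_{w_2}^{\Q}/E_{w_2}^{\Q}$. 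Le th�or�me~\ref{thm1}, combin� � l'identification $\ker \Phi_S = E_{w_1,w_2}^{\Q}$ fournie par la suite exacte~\eqref{suiteex}, identifie alors $\Phi_S(W_{w_1,w_2}^{\Q})$ au plus petit $\Q$-sous-espace dont l'extension complexe contient le vecteur ci-dessus; sa $\Q$-dimension se calcule comme somme d'une contribution r�elle (ind�pendance $\Q$-lin�aire dans $\R$ de $\{\Omega_{w_1}^+\Omega_{w_2}^+,\,\Omega_{w_1}^-\Omega_{w_2}^-\}$, contr�l�e par $r_{w_1}r_{w_2}$) et d'une contribution imaginaire (ind�pendance de $\{\Omega_{w_1}^+\Omega_{w_2}^-,\,\Omega_{w_1}^-\Omega_{w_2}^+\}$, contr�l�e par $r_{w_1}/r_{w_2}$).

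Pour $w_1 \neq w_2$ dans la liste, la surjectivit� observ�e num�riquement de $\Phi_S$ impose que cette dimension vaille $4$, for�ant les deux ind�pendances et par suite l'irrationalit� simultan�e de $r_{w_1}r_{w_2}$ et de $r_{w_1}/r_{w_2}$. L'obstacle principal sera le cas $w_1=w_2$, o� $\Phi_S$ n'est pas surjective: je projetterais alors sur la partie sym�trique du codomaine sous l'�change $T$ des variables $X_1,X_2$ (de $\Q$-dimension $3$), observerais que $\Phi_S(P_{f_w,f_w})$ y prend ses valeurs, et utiliserais l'observation num�rique que l'image y est de dimension maximale $3$ pour conclure, par la m�me analyse de contributions, � l'irrationalit� de $r_w^2$.
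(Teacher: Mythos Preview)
Your overall strategy coincides with the paper's: compute $\Phi_S(P_{f_{w_1},f_{w_2}})$ as $P_{f_{w_1}}\otimes P_{f_{w_2}}$ in the quotient, expand via the Eichler--Shimura decomposition, and combine Theorem~\ref{thm1} with the numerically observed surjectivity to force the $\Q$-linear independence of the period-products. Your real/imaginary splitting of the coefficients is exactly the paper's splitting into parties paire and impaire under the involution $(\varepsilon,\varepsilon)$, and your treatment of the diagonal case $w_1=w_2$ via the swap-symmetric part of the codomain is a reorganisation of the paper's observation that $\Phi_S^{imp}$ then has one-dimensional image while $\Phi_S^{pair}$ remains surjective onto a two-dimensional target.

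There is however a genuine computational error in your derivation of the key identity. The factor $i^{w_1+w_2}$ is spurious: the correct formula is simply
\[
(S,S).P_{f_1,f_2}(X_1,X_2)=P_{f_2,f_1}(X_2,X_1),
\]
as one checks either by the change of variables you propose (the powers of $i$ coming from the modularity of $f_j$, from $(X_j-z_j)^{w_j}$, and from the differentials all cancel) or, more transparently, via the cycle-level identity $[(1,1)+(S,S)].\tau_2=\tau_1\times\tau_1$ used in the paper's Section~4.2. Consequently the shuffle relation gives $\Phi_S(P_{f_1,f_2})=P_{f_1}\otimes P_{f_2}$ uniformly, and your case distinction modulo~$4$ disappears. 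This matters because your proposed fix in the case $w_1+w_2\equiv 2\pmod 4$ --- invoking Proposition~\ref{prop1} to ``absorb'' a residual term proportional to $P_{f_1,f_2}$ in the quotient --- is itself incorrect: Proposition~\ref{prop1} places $P_{f_1,f_2}$ in $W_{w_1,w_2}$, not in $E_{w_1,w_2}=\ker\Phi_S$, so such a term would \emph{not} vanish in the quotient. Once the sign is corrected the issue evaporates and the rest of your argument goes through as written.
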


En particulier, ceci démontre que pour un poids $w$ tel que $S_{w+2}$ est de dimension $1$ le rapport $r_w$ est irrationnel.\par

Pour démontrer ces résultats, nous allons étudier l'idéal à gauche de $\Z[\Gamma^2]$:
\begin{equation}
\mathcal{J}(k_1,k_2)=\{g\in\Z[\Gamma^2]\text{ tel que }g.P_{f_1,f_2}=0,\text{ pour tout }(f_1,f_2)\in S_{k_1}\times S_{k_2}\}.
\end{equation}

Dans une première partie, on va donner une description, indépendante des poids $k_1$ et $k_2$, de l'idéal à gauche $\mathcal{J}(k_1,k_2)$ défini entièrement par les propriétés topologiques de $\H^2$ sous l'action de $\Gamma^2$.
Ceci donnera les arguments principaux de la démonstration du Théorème \ref{thm1}.\par

Dans une seconde partie, on va relier le calcul de $\mathcal{I}_2$ à celui de $\mathcal{I}_1$.
Puis nous montrerons que $\mathcal{I}_2$ est de type fini. 
Nous démontrons alors que les relations (\ref{defI2}) en constituent une famille de générateurs.\par

Dans la dernière partie, nous démontrons la validité de la suite exacte (\ref{suiteex}). 
Nous donnons ensuite une description calculatoire de la partie d'Eisenstein d'ordre $2$ et ainsi un calcul de sa dimension. 
Les valeurs obtenues dans les cas $w\in\{10,14,16,18,20,24\}$, nous permettrons alors de démontrer le théorème \ref{thm2}.\par


\section{Définition homologique de l'idéal des relations doubles}

\subsection{Homologie singulière relative aux pointes}

Nous introduisons une homologie singulière. Pour référence, on pourra se rapporter au livre de Hatcher \cite{Hat}.
Soit $X$ un espace topologique et $X_0$ une partie fermée de $X$. Soit $m\geq 0$ un entier. On définit le simplexe fondamental de dimension $m$ par:
\begin{equation*}
\Delta_m=\{(t_0,...,t_m)\in [0,1]^{m+1}\text{ tel que }\sum_{j=0}^m t_j =1\},
\end{equation*}
et l'ensemble de ses sommets par: $\Delta_m^0=\Delta_m\cap \Z^{m+1}=\{e_j^m=(...,0,1,0,...),j=0...n\}$.\par
Définissons $M_m(X,X_0,\Z)$ le $\Z$-module libre engendré par les $m$-cycles de $X$ aux sommets dans $X_0$ à homotopie près. C'est-à-dire la classe des applications continues:
\begin{equation*}
C:\Delta_m\to X,\text{ telle que }C(\Delta_m^0)\subset X_0.
\end{equation*}
On dit que $C_0$ et $C_1$ sont homotopes s'il existe une application continue:
\begin{equation*}
h:\Delta_m\times [0,1]\to X\text{ tel que }h(u,0)=C_0(u)\text{ et }h(u,1)=C_1(u)\text{ pour tout }u\in\Delta_m,
\end{equation*}
et pour tout $t\in [0,1]$ et $p_0\in\Delta_m^0$, on a $h(p_0,t)\in X_0$.\par
Ceci permet de considérer les applications de bord:
\begin{equation}
\delta_m: M_{m}(X,X_0,\Z)\to M_{m-1}(X,X_0,\Z),\quad [C] \mapsto \sum_{j=0}^{m} (-1)^j [C\circ\delta_j^m],
\end{equation}
où $\delta_j^m:\Delta_{m-1}\to\Delta_{m}, (t_0,...,t_{m-1})\mapsto (t_0,...,t_{j-1},0,t_{j},...,t_{m-1})$.
Elles vérifient pour tout entier $m\geq 0$, $\delta_m\circ\delta_{m+1}=0$. Ceci permet de considérer les groupes d'homologie singulière:
\begin{equation}
H_{m}(X,X_0,\Z)=\Ker\left(\delta_m|M_{m}(X,X_0,\Z)\right)/\Im\left(\delta_{m+1}|M_{m}(X,X_0,\Z)\right).
\end{equation}

Dans notre cadre, nous étudierons des parties $X$ de $\H$ ou $\H^2$. Elles seront associées à des ensembles de sommets $X_0$ retreints à leurs pointes respectif $X_0=X\cap\pte$ ou $X_0=X\cap\pte^2$. Pour toute partie $X$ relative à $X_0$ de ce type nous noterons simplement:
\begin{equation}
M_m^{pte}(X,\Z)=M_m(X,X_0,\Z)\text{ et }H_m^{pte}(X,\Z)=H_m(X,X_0,\Z).
\end{equation}

Nous remarquons que ces espaces de sommets $X_0$ sont discrets. La continuité des restrictions $h_0:\Delta_m^0\times [0,1]\to X_0$ démontre alors qu'elles sont constantes suivants la seconde variable. Ainsi dans le cadre de notre étude, les images des sommets ne dépendent pas du représentant mais seulement de la classe d'homotopie.

De plus, on peut préciser le bord formel de $\H$. Il est donné par $\partial\H=\pte$ de sorte qu'il apparaisse comme la limite des $\gamma z$ lorsque $\gamma\in\Gamma$ et $z\to 0$.
On adopte alors les notations $(p:q)=\frac{p}{q}$ lorsque $q\neq 0$ et $(1:0)=\infty$.\par

L'exemple principal utile dans cet article est le groupe $M_2^{pte}(\H^2,\Z)$.  Il est le $\Z$-module librement engendré par les $2$-cycles de $\H^2$ aux sommets dans $\pte^2$. Un tel $2$-cycle est une classe d'équivalence, aux homotopies fixant les sommets près, des applications continues :
$$C:\Delta_2=\{(t_0,t_1,t_2)\in[0,1]^3\text{ tel que }t_0+t_1+t_2=1\}\to \H^2,$$
vérifiant $C(1,0,0),C(0,1,0),C(0,0,1)\in\pte^2$. La classe de l'application:
\begin{equation}
\Delta_2\to\H^2,\quad(t_0,t_1,t_2)\mapsto \left(-i\log(t_1+t_2),-i\log(t_2)\right),
\end{equation}
définit un élément $\tau_2$ de $M_2^{pte}(\H^2,\Z)$ car on a dans $\pte^2$ :
$$\tau_2(1,0,0)=(\infty,\infty),\quad \tau_2(0,1,0)=(0,\infty)\text{ et }\tau_2(0,0,1)=(0,0).$$

Le polynôme des bi-périodes du couple $(f_1,f_2)$ peut être écrit en fonction du $2$-cycle $\tau_2$ par:
\begin{equation}
P_{f_1,f_2}(X_1,X_2)=\int_{\tau_2} f_1(z_1)(X_1-z_1)^{w_1}f_2(z_2)(X_2-z_2)^{w_2}\d z_1\d z_2. 
\end{equation}

On remarque alors l'intérêt d'étudier les $2$-formes différentielles $\omega_{f_1,f_2}:\H^2\to V_{w_1,w_2}$ définies par:
\begin{equation}
\omega_{f_1,f_2}(z_1,z_2) = f_1(z_1)(X_1-z_1)^{w_1}f_2(z_2)(X_2-z_2)^{w_2}\d z_1\d z_2.
\end{equation}

\subsection{Les formes différentielles associées aux formes modulaires}

Soit $\Omega_{par}^2(\H^2,\C)$ le $\C$-espace vectoriel des $2$-formes différentielles harmoniques de $\H^2$ et nulles aux pointes $\pte^2$. 
Cette propriété permet notamment d'obtenir la convergence de l'intégration d'une telle forme le long de $\tau_2$.\par
Soient $(\gamma_1,\gamma_2)\in\Gamma^2$. Introduisons $\psi_{\gamma_1,\gamma_2}:\H^2\to\H^2$ définie pour $(z_1,z_2)\in\H^2$ par :
\begin{equation}
\psi_{\gamma_1,\gamma_2}(z_1,z_2)=(\gamma_1 z_1,\gamma_2 z_2).
\end{equation}
Considérons une action à gauche de $\Gamma^2$ sur $\Omega_{par}^2(\H^2,\C)$, en posant:
\begin{equation}
(\gamma_1,\gamma_2).\omega (z_1,z_2) = \left(\psi_{\gamma_1^{-1},\gamma_2^{-1}}^* \omega\right)(z_1,z_2) = \omega(\gamma_1^{-1}z_1,\gamma_2^{-1}z_2).
\end{equation}

D'autre part, l'action diagonale de $\Gamma^2$ sur $\H^2$ fournit une action sur $M_2^{pte}(\H^2,\Z)$.\par

Considérons la forme bilinéaire définie par:
\begin{equation}\label{defacc}
\Omega_{par}^2(\H^2,\C)\times M_2^{pte}(\H^2,\C)\to\C,\quad
(\omega,C)\mapsto \langle\omega, C\rangle=\int_{C}\omega.
\end{equation}

Cette forme est non dégénérée et invariante sous l'action de $\Gamma^2$. Plus précisément, pour toute $2$-forme différentielle $\omega\in\Omega^2_{par}(\H^2,\C)$, toute $2$-chaîne $C\in M_2^{pte}(\H^2,\Z)$ et $\gamma\in\Gamma^2$, on a :
\begin{equation}
\langle\gamma.\omega ,\gamma.C \rangle=\int_{\gamma.C}\omega(\gamma^{-1}z)=\int_{C}\omega(z)=\langle\omega,C\rangle.\label{relacc31}
\end{equation}

Pour tous les couples $(f_1,f_2)\in S_{w_1+2}\times S_{w_2+2}$ et $(m_1,m_2)\in\Z^2$ vérifiant $0\leq m_j\leq w_j$), les $2$-formes :
$$f_1(z_1)z_1^{m_1}f_2(z_2)z_2^{m_2}\d z_1\wedge \d z_2\in\Omega_{par}^2(\H^2,\C)$$
peuvent être indexée par $X_1^{w_1-m_1}X_2^{w_2-m_2}$. Après renormalisation, elles définissent bien :
\begin{equation*}
\omega_{f_1,f_2}(z_1,z_2)=\sum_{m_1=0}^{w_1}\sum_{m_2=0}^{w_2}\binom{w_1}{m1}\binom{w_2}{m_2} f(z_1)z_1^{m_1}f_2(z_2)z_2^{m_2}\d z_1\wedge \d z_2 (-X_1)^{w_1-m_1}(-X_2)^{w_2-m_2}.
\end{equation*}
Ainsi les formes $\omega_{f_1,f_2}$ sont des éléments de  $\Omega_{par}^2(\H^2,\C)\otimes_{\Q} V_{w_1,w_2}=\Omega_{par}^2(\H^2,V_{w_1,w_2})$.\par

On remarquera que cette construction est valide dans le cas classique où l'on défini pour $f\in S_{w+2}$ la $1$-forme : 
\begin{equation}
\omega_f(z)=f(z)(X-z)^{w}\d z \in\Omega_{par}^1(\H,V_{w}).
\end{equation}
En particulier, on a : $\omega_{f_1,f_2}(z_1,z_2)=\omega_{f_1}(z_1)\wedge\omega_{f_2}(z_2)$.\par

On peut observer une propriété d'invariance sous l'action de $\Gamma$. Pour tout $\gamma=\pm\matg\in\Gamma$ et toute forme $f\in S_{w+2} $, la forme différentielle associée $\omega_f$ vérifie:
\begin{equation}
\gamma.\omega_f(\gamma^{-1}z,X) = (-cX+a)^w f(\gamma^{-1}z)\left(\gamma^{-1}z-\gamma^{-1}X\right)^w\d (\gamma^{-1}z) =\omega_f(z,X).\label{relmod31}
\end{equation}

Ceci permet de déduire la $\Gamma^2$-invariance des formes $\omega_{f_1,f_2}$. Et ainsi de transporter l'action sur $V_{w_1,w_2}$ en une action sur $M_2^{pte}(\H^2,\Z)$ dans le cas des polynômes des bi-périodes.

\begin{prop}\label{prop37}
Les actions de $\Gamma^2$ sur $V_{w_1,w_2}$ et $ M_2^{pte}(\H^2,\Z)$ sont liées par:
\begin{equation}
g.P_{f_1,f_2}=\langle\omega_{f_1,f_2}, g.\tau_2\rangle,\text{ pour tout } g\in\Z[\Gamma^2].
\end{equation}
\end{prop}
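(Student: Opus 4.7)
Par $\Z$-lin�arit� des deux membres en $g$, il suffit d'�tablir l'identit� pour un g�n�rateur $g=(\gamma_1,\gamma_2)\in\Gamma^2$. Le cas $(1,1)$ sera imm�diat : l'�galit� $P_{f_1,f_2}=\langle\omega_{f_1,f_2},\tau_2\rangle$ n'est que la r��criture de la d�finition du polyn�me des bi-p�riodes comme int�grale de $\omega_{f_1,f_2}$ le long de $\tau_2$.

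Pour $(\gamma_1,\gamma_2)$ g�n�ral, je partirai du membre de droite $\langle\omega_{f_1,f_2},(\gamma_1,\gamma_2).\tau_2\rangle$ et proc�derai en deux �tapes. J'�tendrai d'abord la formule d'invariance (\ref{relacc31}) aux formes � valeurs dans $V_{w_1,w_2}$ par lin�arit� sur les coefficients polynomiaux, ce qui transf�rera l'action de $\Gamma^2$ du $2$-cycle vers une action par tir-en-arri�re sur la forme :
\begin{equation*}
\langle\omega_{f_1,f_2},(\gamma_1,\gamma_2).\tau_2\rangle=\langle(\gamma_1^{-1},\gamma_2^{-1}).\omega_{f_1,f_2},\tau_2\rangle.
\end{equation*}
J'invoquerai ensuite la relation (\ref{relmod31}) appliqu�e s�par�ment � chacun des facteurs du produit tensoriel $\omega_{f_1,f_2}=\omega_{f_1}\wedge\omega_{f_2}$ : celle-ci affirme que l'action diagonale combinant tir-en-arri�re et action polynomiale stabilise chaque $\omega_{f_j}$, donc aussi leur produit. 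J'en d�duirai que $(\gamma_1^{-1},\gamma_2^{-1}).\omega_{f_1,f_2}$ (au sens du tir-en-arri�re) co�ncide avec $(\gamma_1,\gamma_2).\omega_{f_1,f_2}$ (au sens de l'action polynomiale sur le facteur $V_{w_1,w_2}$). Cette derni�re action ne portant que sur les coefficients polynomiaux, elle commutera avec l'int�gration contre $\tau_2$, et j'obtiendrai $(\gamma_1,\gamma_2).\langle\omega_{f_1,f_2},\tau_2\rangle=(\gamma_1,\gamma_2).P_{f_1,f_2}$, ce qui conclura.

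Le point principal � surveiller sera la distinction soigneuse entre les deux actions de $\Gamma^2$ mises en jeu (action polynomiale sur $V_{w_1,w_2}$ d'une part, action par tir-en-arri�re sur $\Omega_{par}^2(\H^2,\C)$ d'autre part) et la v�rification que leur combinaison, triviale sur $\omega_f$ d'apr�s (\ref{relmod31}), passe correctement au produit tensoriel des deux facteurs. Tout le reste rel�ve d'une manipulation purement formelle.
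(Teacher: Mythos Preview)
Your argument is correct and follows essentially the same route as the paper: reduce by $\Z$-linearity to a single $(\gamma_1,\gamma_2)\in\Gamma^2$, then combine the invariance of the pairing (\ref{relacc31}) with the $\Gamma$-invariance of each factor $\omega_{f_j}$ given by (\ref{relmod31}). The only cosmetic difference is that the paper starts from the left-hand side $(\gamma_1,\gamma_2).P_{f_1,f_2}$ and applies (\ref{relmod31}) before (\ref{relacc31}), whereas you start from the right-hand side and apply them in the opposite order; the content is identical.
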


\begin{proof}
Soit $(\gamma_1,\gamma_2)\in\Gamma^2$. Son action sur le polynôme des bi-périodes devient:
\begin{align*}
{(\gamma_1,\gamma_2)}.P_{f_1,f_2}(X_1,X_2)
&=\langle \gamma_1.\omega_{f_1}(z_1,X_1)\wedge\gamma_2.\omega_{f_2}(z_2,X_2),\tau_2\rangle\\
&=\langle \omega_{f_1}(\gamma_1.z_1,X_1)\wedge\omega_{f_2}(\gamma_2.z_2,X_2),\tau_2\rangle\text{ par (\ref{relmod31})}\\
&=\langle\omega_{f_1}(z_1,X_1)\wedge\omega_{f_2}(z_2,X_2),(\gamma_1,\gamma_2).\tau_2\rangle\text{ par (\ref{relacc31})}.
\end{align*}
La proposition s'étend par linéarité à $\Z[\Gamma^2]$.
\end{proof}

Définissons les sous-espaces suivants de $\Omega_{par}^2(\H^2,V_{w_1,w_2})$ par:
\begin{align}
\Omega_{k_1,k_2}^{+}&=\left\langle \omega_{f_1,f_2} \text{ pour }(f_1,f_2)\in S_{k_1}\times S_{k_2}\right\rangle,\\
\Omega_{k_1,k_2}^{-}&=\left\langle \omega_{f_1,f_2} \text{ pour }(\overline{f_1},\overline{f_2})\in S_{k_1}\times S_{k_2}\right\rangle.
\end{align}
Le premier est constitué de $2$-formes holomorphes en les deux variables et le second de $2$-formes anti-holomorphes en les deux variables. De plus, on peut remarquer qu'ils sont conjugués complexes l'un de l'autre. Considérons alors leur somme directe stable par conjugaison complexe:
\begin{equation}
\Omega_{k_1,k_2}^{\harm}=\Omega_{k_1,k_2}^{+}\oplus\Omega_{k_1,k_2}^{-}.\label{sumomega2}
\end{equation}

Définissons pour tout ensemble $\Omega$ de $2$-formes harmoniques sur $\H^2$, son orthogonal dans $M_2^{pte}(\H^2,\Z)$ par:
\begin{equation}
\Omega^{\bot}=\big\{C\in M_2^{pte}(\H^2,\Z)\text{ tel que }\langle \omega,C\rangle=0,\text{ pour tout }\omega\in\Omega\big\}.
\end{equation}

Ceci permet de réécrire l'idéal $\mathcal{J}(k_1,k_2)$:

\begin{coro}\label{propJkk}
On a $\mathcal{J}(k_1,k_2)=\big\{g\in \Z[\Gamma^2]\text{ tel que }g.\tau_2\in \left(\Omega_{k_1,k_2}^{\harm}\right)^{\bot}\big\}$.
\end{coro}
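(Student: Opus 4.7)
L'id�e centrale consiste � traduire directement l'annulation $g.P_{f_1,f_2}=0$ en une condition d'orthogonalit�, gr�ce � la Proposition \ref{prop37}.

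Dans un premier temps, la d�finition se d�roule imm�diatement : dire que $g$ appartient � $\mathcal{J}(k_1,k_2)$ revient � demander $g.P_{f_1,f_2}=0$ pour tout $(f_1,f_2)\in S_{k_1}\times S_{k_2}$. La Proposition \ref{prop37} permet alors de r��crire cette condition sous la forme $\langle\omega_{f_1,f_2},g.\tau_2\rangle=0$ pour tout tel couple. Par bilin�arit� du crochet et compte tenu de la d�finition de $\Omega_{k_1,k_2}^{+}$ comme sous-espace engendr� par les $\omega_{f_1,f_2}$, cette condition �quivaut � $g.\tau_2\in(\Omega_{k_1,k_2}^{+})^{\bot}$.

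L'�tape non triviale est ensuite le passage de l'orthogonalit� � $\Omega_{k_1,k_2}^{+}$ � celle � $\Omega_{k_1,k_2}^{\harm}$. Pour cela, j'exploiterais la d�composition (\ref{sumomega2}) ainsi que le fait, rappel� juste apr�s, que $\Omega_{k_1,k_2}^{-}$ est le conjugu� complexe de $\Omega_{k_1,k_2}^{+}$. Puisque $g\in\Z[\Gamma^2]$, la cha�ne $g.\tau_2$ appartient au $\Z$-module $M_2^{pte}(\H^2,\Z)$ ; on dispose ainsi de la relation $\langle\overline{\omega},g.\tau_2\rangle=\overline{\langle\omega,g.\tau_2\rangle}$ pour toute $2$-forme $\omega$. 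Par cons�quent, l'orthogonalit� � $\Omega_{k_1,k_2}^{+}$ entra�ne celle � $\Omega_{k_1,k_2}^{-}$, et donc � leur somme directe $\Omega_{k_1,k_2}^{\harm}$ ; la r�ciproque est �vidente puisque $\Omega_{k_1,k_2}^{+}\subset\Omega_{k_1,k_2}^{\harm}$.

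Le seul point demandant un peu de soin est la justification rigoureuse de l'identit� de conjugaison $\int_{C}\overline{\omega}=\overline{\int_{C}\omega}$ pour $C$ une $\Z$-combinaison de $2$-cycles ; elle r�sulte de la lin�arit� de l'int�gration et du caract�re r�el de la param�trisation par le simplexe $\Delta_2$. La cha�ne d'�quivalences pr�c�demment �tablie fournit alors directement la caract�risation annonc�e.
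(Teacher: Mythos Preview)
Your proof is correct and follows essentially the same route as the paper: apply Proposition~\ref{prop37} to rewrite $g.P_{f_1,f_2}=0$ as $\langle\omega_{f_1,f_2},g.\tau_2\rangle=0$, deduce $g.\tau_2\in(\Omega_{k_1,k_2}^{+})^{\bot}$, then use the conjugation identity $\langle\overline{\omega},C\rangle=\overline{\langle\omega,C\rangle}$ to pass from $(\Omega_{k_1,k_2}^{+})^{\bot}$ to $(\Omega_{k_1,k_2}^{\harm})^{\bot}$. The only cosmetic difference is that the paper phrases the last step as the chain of equalities $(\Omega_{k_1,k_2}^{+})^{\bot}=(\Omega_{k_1,k_2}^{-})^{\bot}$ puis $(\Omega_{k_1,k_2}^{+})^{\bot}\cap(\Omega_{k_1,k_2}^{-})^{\bot}=(\Omega_{k_1,k_2}^{\harm})^{\bot}$, whereas you argue one inclusion via conjugation and the other via $\Omega_{k_1,k_2}^{+}\subset\Omega_{k_1,k_2}^{\harm}$.
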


\begin{proof}
La Proposition \ref{prop37} donne pour tout élément $g\in\Z[\Gamma^2]$:
\begin{align*}
\forall (f_1,f_2)\in S_{k_1} \times S_{k_2} , g.P_{f_1,f_2}=0
&\Leftrightarrow \forall (f_1,f_2)\in S_{k_1} \times S_{k_2} ,\langle \omega_{f_1,f_2}, g.\tau_2\rangle=0\\
\Leftrightarrow \forall \omega\in \Omega_{k_1,k_2}^{+}, \langle \omega, g.\tau_2\rangle=0
&\Leftrightarrow g.\tau_2\in \left(\Omega_{k_1,k_2}^{+}\right)^{\bot}.
\end{align*}
On remarque que : $\left(\Omega_{k_1,k_2}^{+}\right)^{\bot}=\left(\overline{\Omega_{k_1,k_2}^{+}}\right)^{\bot}=\left(\Omega_{k_1,k_2}^{-}\right)^{\bot}$ d'après le calcul:
$$\forall\omega\in\Omega^2,\forall C\in M_2,\langle \omega,C\rangle=0 \Leftrightarrow \langle \overline{\omega},C\rangle=\overline{\langle \omega, C\rangle}=0.$$
Ce permet d'obtenir: 
$$\left(\Omega_{k_1,k_2}^{+}\right)^{\bot}
=\left(\Omega_{k_1,k_2}^{+}\right)^{\bot}\cap\left(\Omega_{k_1,k_2}^{-}\right)^{\bot}
=\left(\Omega_{k_1,k_2}^{\harm}\right)^{\bot}.$$
La dernière égalité provenant de la définition (\ref{sumomega2}).
\end{proof}

\subsection{Espaces transverses de $ M_1^{pte}(\H^2,\Z)$}

Pour calculer $(\Omega_{k_1,k_2}^{\harm})^{\bot}$, nous introduisons des \textit{espaces transverses} de $M_1^{pte}(\H^2,\Z)$.

Soient $g\in\Gamma$ et $c\in M_1^{pte}(\H,\Z)$. 
Définissons $H_g$, $V_g$ et $D_g$ des applications de $M_1^{pte}(\H,\Z)$ dans $M_1^{pte}(\H^2,\Z)$ données respectivement par les classes des applications:
\begin{align}
H_g(c):\Delta_1\to\H^2,\quad& (t_0,t_1) \mapsto (c(t_0,t_1),g.\infty),\\
V_g(c):\Delta_1\to\H^2,\quad& (t_0,t_1) \mapsto (g.0,c(t_0,t_1)),\\
\text{et }D_g(c):\Delta_1\to\H^2,\quad& (t_0,t_1) \mapsto (c(t_0,t_1),g.c(t_0,t_1)).
\end{align}

Considérons les sous-modules de $M_1^{pte}(\H^2,\Z)$ définis par :
\begin{align}
H=&\{H_g(c)\text{ pour }c\in M_1^{pte}(\H,\Z)\text{ et }g\in\Gamma\},\\
V=&\{V_g(c)\text{ pour }c\in M_1^{pte}(\H,\Z)\text{ et }g\in\Gamma\},\\
\text{et }D=&\{D_g(c)\text{ pour }c\in M_1^{pte}(\H,\Z)\text{ et }g\in\Gamma\}.
\end{align}

On qualifiera de \textit{transverses} les chaînes de $H+V+D\subset M_1^{pte}(\H^2,\Z)$.\par

Notons $H^0$, $V^0$ et $D^0$ les sous-groupes constitués par les chaînes fermées, c'est-à-dire de bord nul, de $H$, $V$ et $D$ respectivement.\par

L'action de $\Gamma^2$ sur $ M_1^{pte}(\H^2,\Z)$ respecte les ensembles $H$, $D$ et $V$ :
\begin{align}
(\gamma_1,\gamma_2).H_g(c)&=H_{\gamma_2 g}(\gamma_1.c),\label{acth}\\
(\gamma_1,\gamma_2).V_g(c)&=V_{\gamma_1 g}(\gamma_2.c),\\
\text{et }(\gamma_1,\gamma_2).D_g(c)&=D_{\gamma_2 g \gamma_1^{-1}}(\gamma_1.c).
\end{align}

Notons $\Gamma_{\infty}$ et $\Gamma_0$ les sous-groupes de $\Gamma$ stabilisant les pointes $\infty$ et $0$ respectivement.
L'action de $\Gamma$ scinde chacun de ces espaces transverses:
\begin{align}
H&=\bigoplus_{g\in\Gamma/\Gamma_{\infty}} H_g(M_1^{pte}(\H,\Z)),\label{sumh}\\
V&=\bigoplus_{g\in\Gamma/\Gamma_{0}} V_g( M_1^{pte}(\H,\Z)),\\
\text{et }D&=\bigoplus_{g\in\Gamma} D_g( M_1^{pte}(\H,\Z)).\label{sumd}
\end{align}

Les sous-groupes $H,D$ et $V$ sont deux à deux d'intersection nulle.
En effet on résout les différentes équations suivantes:
\begin{align*}
(g_1.\infty,c_1(t_0,t_1))&=(c_2(t_0,t_1),g_2.0), \forall (t_0,t_1)\in\Delta_1 &\text{pour }V\cap H,\\
(g_1.\infty,c_1(t_0,t_1))&=(c_2(t_0,t_1),g_2.c_2(t_0,t_1)), \forall (t_0,t_1)\in\Delta_1 &\text{pour }V\cap D,\\
\text{et }(c_1(t_0,t_1),g_1.c_1(t_0,t_1))&=(c_2(t_0,t_1),g_2.0), \forall (t_0,t_1)\in\Delta_1 &\text{pour }D\cap H.
\end{align*}
Chacune de ces équations fixe les chemins $c_1$ et $c_2$ comme étant constant. Donc la classe de $(c_1,c_2)$ est nulle dans $M_1^{pte}(\H^2,\Z)$.

Pour $0\leq j\leq 2$, posons $\varphi_j:\H\to\H^2$ définies pour $z\in\H$ par:
\begin{equation}\label{applitrans}
\varphi_0(z)=(z,\infty),\quad\varphi_1(z)=(z,z)\text{ et}\quad\varphi_2(z)=(0,z).
\end{equation}

Ces fonctions se prolongent bien aux pointes et on obtient les écritures:

\begin{equation*}
\psi_{\gamma_1,\gamma_2}(\varphi_0(c)) = H_{\gamma_1}(\gamma_2 c),\quad
\psi_{\gamma_1,\gamma_2}(\varphi_1(c)) = D_{\gamma_1}(\gamma_2 c)\text{ et }
\psi_{\gamma_1,\gamma_2}(\varphi_2(c)) = V_{\gamma_1}(\gamma_2 c).
\end{equation*}

On considère $\tau_1\in M_1^{pte}(\H,\Z)$ la classe de l'application définie par :
\begin{equation}
\tau_1(t_0,t_1)=-i\log (t_1)\text{ pour }(t_0,t_1)\in\Delta_1.
\end{equation}
On retrouve le polynôme des périodes d'une forme $f\in\S_k$ par : $P_f=\int_{\tau_1}\omega_f$.

\begin{rem}\label{dtau2}
La $1$-chaîne $\delta_2 \tau_2$ est la somme alternée de trois $1$-cycles de $M_1^{pte}(\H^2,\Z)$:
\begin{equation}\label{dtau2eq}
\delta_2 \tau_2=H_{id}(\tau_1)-D_{id}(\tau_1)+V_{id}(\tau_1)=\sum_{j=0}^2(-1)^j\varphi_j(\tau_1).
\end{equation}
\end{rem}

En effet les bords d'une $2$-chaine sont donnés par la formule $\sum_{j=0}^2(-1)^j\delta_2^j$ où $\delta_2^j$ consiste à imposer $t_j=0$, ainsi le bord de $\tau_2$ est déterminé par:
\begin{align*}
\tau_2(t_0,t_1,0)&=\left(-i\log(t_1),\infty\right)=\left(\tau_1(t_0,t_1),\infty\right),\\
\tau_2(t_0,0,t_2)&=\left(-i\log(t_2),-i\log(t_2)\right)=\left(\tau_1(t_0,t_2),\tau_1(t_0,t_2)\right),\\
\text{et }\tau_2(0,t_1,t_2)&=\left(0,-i\log(t_2)\right)=\left(0,\tau_1(t_1,t_2)\right).
\end{align*}
Ce qui donne la décomposition de $\delta_2\tau_2$ en fonction de $\tau_1$ après réécriture.\par

Dans un premier temps, nous introduisons l'idéal $\mathcal{J}_2$ par :
\begin{equation}\label{defI2cycle}
\mathcal{J}_2=\big\{g\in\Z[\Gamma^2]\text{ tel que }g.\delta_2\tau_2\in H^0+V^0+D^0\big\}.
\end{equation}
Cette définition est clairement indépendante des poids $(k_1,k_2)$. 
Dans la seconde section, nous démontrerons qu'il est de type fini et qu'il est bien engendré par les relations (\ref{defI2}) de Manin d'ordre $2$. 
On aura alors $\mathcal{J}_2=\mathcal{I}_2$.\par

On définit les idéaux annulateurs des chemins transverses par:
\begin{align}
I_H&=\{g\in\Z[\Gamma^2]\text{ tel que }g.H_{id}(\tau_1)\in H^0\},\label{defIH}\\ 
I_V&=\{g\in\Z[\Gamma^2]\text{ tel que }g.V_{id}(\tau_1)\in V^0\},\label{defIV}\\
\text{et }I_D&=\{g\in\Z[\Gamma^2]\text{ tel que }g.D_{id}(\tau_1)\in D^0\}.\label{defID}
\end{align}

\begin{prop}\label{inclhvd}
L'idéal $\mathcal{J}_2$ de $\Z[\Gamma^2]$ est déterminé par l'intersection des idéaux annulateurs de ces trois segments:
\begin{equation}
\mathcal{J}_2=I_H\cap I_V \cap I_D.
\end{equation}
\end{prop}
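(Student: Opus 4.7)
La preuve s'effectue par double inclusion.

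L'inclusion $I_H\cap I_V \cap I_D \subset \mathcal{J}_2$ est imm�diate. Pour $g\in I_H\cap I_V\cap I_D$, la d�composition (\ref{dtau2eq}) donne
\[g.\delta_2\tau_2=g.H_{id}(\tau_1)-g.D_{id}(\tau_1)+g.V_{id}(\tau_1),\]
et par les d�finitions (\ref{defIH})--(\ref{defID}), les trois termes appartiennent respectivement � $H^0$, $D^0$ et $V^0$. Donc $g.\delta_2\tau_2\in H^0+V^0+D^0$, c'est-�-dire $g\in\mathcal{J}_2$.

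Pour l'inclusion r�ciproque, soit $g\in\mathcal{J}_2$, et �crivons $g.\delta_2\tau_2=h_0+v_0+d_0$ avec $h_0\in H^0$, $v_0\in V^0$ et $d_0\in D^0$. En r�appliquant (\ref{dtau2eq}) et en utilisant les formules d'action (\ref{acth})--(\ref{sumd}) qui assurent $g.H_{id}(\tau_1)\in H$, $g.V_{id}(\tau_1)\in V$ et $g.D_{id}(\tau_1)\in D$, on obtient
\[\bigl(g.H_{id}(\tau_1)-h_0\bigr)+\bigl(g.V_{id}(\tau_1)-v_0\bigr)-\bigl(g.D_{id}(\tau_1)+d_0\bigr)=0,\]
o� les trois termes appartiennent respectivement � $H$, $V$ et $D$. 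Le point cl� est alors d'invoquer la d�composition en somme directe $H\oplus V\oplus D\subset M_1^{pte}(\H^2,\Z)$ pour conclure que chaque terme est nul, d'o� $g.H_{id}(\tau_1)=h_0\in H^0$, $g.V_{id}(\tau_1)=v_0\in V^0$, $g.D_{id}(\tau_1)=-d_0\in D^0$ et finalement $g\in I_H\cap I_V\cap I_D$.

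Le principal obstacle r�side donc dans l'extension de la disjonction deux � deux d�j� �tablie en une v�ritable d�composition en somme directe $H\oplus V\oplus D$. L'argument devrait reprendre le raisonnement par les supports g�om�triques employ� pour le cas des paires : les cha�nes de $H$ sont port�es par $\H\times\pte$, celles de $V$ par $\pte\times\H$, et celles de $D$ par la r�union des diagonales tordues $\{(z,gz):z\in\H\}$ pour $g\in\Gamma$; ces trois sous-ensembles ne se rencontrent deux � deux que sur un ensemble discret de $\pte^2$. Une cha�ne appartenant simultan�ment � l'un des trois sous-modules et � la somme des deux autres a donc une image incluse dans un tel ensemble discret, ce qui en fait une cha�ne constante, triviale dans $M_1^{pte}(\H^2,\Z)$. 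En isolant les composantes � l'aide des d�compositions (\ref{sumh})--(\ref{sumd}), on conclut � l'unicit� de l'�criture $h+v+d$ et donc � la somme directe recherch�e.
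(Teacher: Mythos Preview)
Your proof is correct and follows essentially the same double-inclusion strategy as the paper: the easy inclusion via the decomposition (\ref{dtau2eq}), then the reverse by separating the three transverse components. The paper phrases the separation slightly differently, writing $g.H_{id}(\tau_1)\in H\cap(H^0+V+D)=H^0$ and symmetrizing, but the substance is identical. You are in fact more explicit than the paper about the one delicate point, namely upgrading the pairwise trivial intersections to a genuine direct sum $H\oplus V\oplus D$; the cleanest justification is that $M_1^{pte}(\H^2,\Z)$ is by definition the free $\Z$-module on homotopy classes, and the pairwise argument already shows that the generating classes of $H$, $V$, $D$ are pairwise disjoint subsets of this free basis, whence the sum is automatically direct.
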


\begin{proof}
L'inclusion $I_H\cap I_V \cap I_D\subset \mathcal{J}_2$ est une conséquence directe de la décomposition de $\delta_2 \tau_2$ vu en (\ref{dtau2eq}). Réciproquement, soit $g\in\mathcal{J}_2$ alors $g.\delta_2 \tau_2\in (H^0+V^0+D^0)$. On a:
$$g.H_{id}(\tau_1)=g.\delta_2\tau_2-g.V_{id}(\tau_1)+g.D_{id}(\tau_1)\in H\cap\left(H^0+V+D\right).$$
car les chaînes transverses sont stables par $\Gamma$. Puis on a :
$$H\cap\left(H^0+V+D\right)=H^0+(H\cap V)+(H\cap D)=H^0.$$
En effet, les $\Z$-modules $H$, $V$ et $D$ sont deux à deux d'intersection nulle. De plus, ils sont sans torsions donc quitte à prendre les $\Q$-espaces vectoriels associés, on obtient bien l'égalité. Et ainsi on obtient $g\in I_H$. Ce raisonnement se symétrise et on obtient bien le résultat.
\end{proof}

Considérons alors le $\Z[\Gamma^2]$-module des $2$-cycles à bords transverses défini par:
\begin{equation}\label{defM2tr}
M_2^{\tr}(\H^2,\Z)=\delta_2^{-1}( H+V+D ) = \left\{C\in M_2^{pte}(\H^2,\Z) \text{ tel que }\delta_2 C\in H+V+D\right\}.
\end{equation}
Il contient en particulier $\tau_2$ ainsi que le sous-module suivant:
\begin{equation}\label{defM2tr0}
M_2^{0}(\H^2,\Z)=\delta_2^{-1}( H^0+V^0+D^0 ) = \left\{C\in M_2^{pte}(\H^2,\Z) \text{ tel que }\delta_2 C\in H^0+V^0+D^0\right\}.
\end{equation}

On dispose ainsi de l'écriture :
\begin{equation}\label{defI2cycletr}
\mathcal{J}_2=\big\{g\in\Z[\Gamma^2]\text{ tel que }g.\tau_2\in M_2^{0}(\H^2,\Z)\big\}.
\end{equation}

De plus, on peut démontrer que : 
\begin{equation}\label{sumM2tr0}
M_2^{0}(\H^2,\Z)=\Ker(\delta_2)+\sum_{\gamma\in\Gamma^2}\sum_{j=0}^2 \psi_{\gamma}\varphi_j\left(M_2^{pte}(\H,\Z)\right).
\end{equation}

\subsection{Formes différentielles transverses sur $\Gamma^2\backslash\H^2$}

Considérons le sous-espace $\Omega_{k_1,k_2}$ des $2$-formes $\omega\in\Omega^2_{par}(\H^2,V_{w_1,w_2})$ vérifiant les trois propriétés :
\begin{itemize}
\item $\omega$ est une forme fermée : $\d\omega=0$,
\item $\omega$ est invariante par le groupe $\Gamma^2$ : $\forall \gamma\in\Gamma^2, \psi_{\gamma}^*\omega=\gamma.\omega$,
\item $\omega$ est transverse : $\varphi_0^*\omega=\varphi_1^*\omega=\varphi_2^*\omega=0$.
\end{itemize}

De plus, on notera $\Omega_{k_1,k_2}^0$ l'ensemble des formes exactes de $\Omega_{k_1,k_2}$.

\begin{ex}
\textbf{a)}
Les $2$-formes $\omega_{f_1,f_2}$ constituent des exemples de référence. 
En effet, un calcul direct nous montre que les $2$-formes associées aux formes modulaires vérifiées les trois propriétés précédentes. 
De plus, la conjugaison complexe commute avec les trois conditions. 
Ainsi $\Omega_{k_1,k_2}^{\harm}$ est un sous-espaces de $\Omega_{k_1,k_2}$.

\textbf{b)}
On construit désormais des exemples pour les formes exactes.
On dispose de $G_{w+2}(z)$ la forme d'Eisenstein de poids $w+2$. Elle permet de construire une $1$-forme exacte sur $\H$ et à valeurs dans $V_w$ par :
\begin{equation}
\omega_w^0(z)=G_{w+2}(z)(X-z)^w\d z - G_{w+2}(-\bar{z})(X-\bar{z})^w\d \bar{z} \in \Omega_{k}^0.
\end{equation}

L'écriture en somme $G_{w+2}(z)=\sum_{\mat{*}{*}{c}{d}\in\Gamma_{\infty}\backslash\Gamma} (cz+d)^{-w-2}$ permet d'obtenir le caractère exact.
En effet, on a:
\begin{align*}
\omega_w^0(z)&=\sum_{\mat{*}{*}{c}{d}\in\Gamma_{\infty}\backslash\Gamma} \left(\frac{(X-z)^w}{(cz+d)^{w}}\frac{\d z}{(cz+d)^{2}} - \frac{(X-\bar{z})^w}{(c\bar{z}+d)^{w}}\frac{\d \bar{z}}{(c\bar{z}+d)^{2}}\right)\\
&=\sum_{\gamma=\mat{*}{*}{c}{d}\in\Gamma_{\infty}\backslash\Gamma} (cX+d)^w \left((\gamma X-\gamma z)^w\d (\gamma z) - (\gamma X-\gamma \bar{z})^w\d (\gamma \bar{z})\right)\\
&=\d\left(\sum_{\gamma=\mat{*}{*}{c}{d}\in\Gamma_{\infty}\backslash\Gamma} \frac{(cX+d)^w}{w+1}\left( (\gamma z-\gamma X)^{w+1} - (\gamma \bar{z}-\gamma X)^{w+1}\right)\right).
\end{align*}

Nous introduisons ainsi la fonction harmonique dont dérive $\omega_w^0$:
\begin{align}
F_w(z,X) &= \sum_{\gamma=\mat{*}{*}{c}{d}\in\Gamma_{\infty}\backslash\Gamma} \frac{(cX+d)^w}{w+1}\left( (\gamma z-\gamma X)^{w+1} - (\gamma \bar{z}-\gamma X)^{w+1}\right).\\
&=  \frac{z-\bar{z}}{w+1}\sum_{\alpha=0}^w \sum_{\mat{*}{*}{c}{d}\in\Gamma_{\infty}\backslash\Gamma}\frac{(z-X)^{\alpha}(\bar{z}-X)^{w-\alpha}}{(cz+d)^{\alpha+1}(c\bar{z}+d)^{w-\alpha+1}}.
\end{align}

Elle est bien à valeurs dans $V_w$ et est $\Gamma$-invariant car vérifie:
\begin{equation}
\forall g\in\Gamma, \psi_{g}^*F_w(z,X)=F_w(gz,X)=g.F_w(z,X).
\end{equation}

Cette $1$-forme exacte permet de considérer des exemples de $2$-formes exactes de $\Omega_{k_1,k_2}^0$ par:
\begin{align}
\omega_{w_1}^0\wedge\omega_{f_2} = \d\Big(F_{w_1}(z_1,X_1) f_2(z_2)(X_2-z_2)^{w_2}\d z_2\Big)\text{ pour }f_2\in S_{w_2+2},\\
\omega_{f_1}\wedge\omega_{w_2}^0 = \d\Big(f_1(z_1)(X_1-z_1)^{w_1}F_{w_2}(z_2,X_2)\d z_1\Big)\text{ pour }f_1\in S_{w_1+2}.
\end{align}
\end{ex}

On va désormais construire un produit scalaire de Petersson d'ordre $2$ sur $\Omega_{k_1,k_2}$.\par

Haberland (p.277 dans \cite{Hab83}) propose de munir l'espace $V_w^{\Q}$ d'un produit scalaire défini par :
$\left[ X^m, X^{w-m}\right] = (-1)^m \binom{w}{m}^{-1}$ et nul sur les autres paires de monômes.\par

Ce produit scalaire est invariant par l'action de $\Gamma$ :
\begin{equation}\label{invg2}
\left[ \gamma.P,\gamma.Q\right] = \left[P, Q\right],\text{ pour }P,Q\in V_w\text{ et }\gamma\in\Gamma.
\end{equation}

De plus, par construction, il vérifie:
\begin{equation}\label{Pol1}
[(X-a)^w,P] = P(a)\quad\text{ pour }P\in V_w\text{ et }a\in\C.
\end{equation}

Cette dernière remarque permet notamment de retrouver le produit scalaire de Petersson.
Pour $f,g\in S_{w+2}$, on a : $(f,g)=\int_{\Gamma\backslash\H}[\omega_f,\overline{\omega_g}]$ car :
\begin{equation}
[\omega_f,\overline{\omega_g}] = f(z)\overline{g(z)} [(X-z)^w,(X-\bar{z})^w] \d z\d \bar{z}=f(z)\overline{g(z)} (z-\bar{z})^w \d z\d \bar{z}.
\end{equation}\par

On peut alors construire par tensorisation une forme bilinéaire non dégénérée sur $V_{w_1,w_2}^{\Q}$ et invariante par $\Z[\Gamma^2]$ par:
$\left[P_1\otimes P_2, Q_1\otimes Q_2\right] = \left[P_1,Q_1\right].\left[P_2,Q_2\right]$.

Ainsi on peut définir un produit de Petersson d'ordre $2$ pour $\omega_1,\omega_2\in\Omega_{k_1,k_2}$ par:
\begin{equation}
(\omega_1,\omega_2)_{P} = \int_{\Gamma^2\backslash\H^2} [\omega_1,\overline{\omega_2}] \in\C.
\end{equation}

On peut vérifier que pour deux telles $2$-formes, la $4$-forme $[\omega_1,\overline{\omega_2}]$ est bien définie sur $\Gamma^2\backslash\H^2$ car pour $\gamma\in\Gamma^2$ :
\begin{equation}
\psi_{\gamma}^*[\omega_1,\overline{\omega_2}]=[\psi_{\gamma}^*\omega_1,\psi_{\gamma}^*\overline{\omega_2}]=[\gamma.\omega_1,\gamma.\overline{\omega_2}]=[\omega_1,\overline{\omega_2}].
\end{equation}

On remarque en particulier que pour deux couples $(f_1,f_2),(g_1,g_2)\in S_{k_1}\times S_{k_2}$, on retrouve le produit de Petersson classique :
\begin{equation}
(\omega_{f_1,f_2},\omega_{g_1,g_2})_{P} = (f_1,g_1)(f_2,g_2)
\end{equation}

D'autre part, on observe que $\Omega_{k_1,k_2}^0$ est le noyau de cette forme bilinéaire. En effet, on peut introduire un domaine fondamentale $D_2$ pour $\Gamma^2\backslash\H^2$ et pour deux formes $\d F,\omega\in\Omega_{k_1,k_2}$, on a:
\begin{equation}
(\d F,\omega)_{P} = \int_{\Gamma^2\backslash\H^2} [\d F,\overline{\omega}]=\int_{D_2} \d [F,\overline{\omega}] = \oint_{\partial D_2} [F,\omega]=0.
\end{equation}
Or l'homologie classique de $\H^2$ et donc celle de $D_2$ donne $H_2(\H^2,\Z) = 0$. 
Ainsi on obtient réciproquement pour toute forme $\omega_0$ du noyau, on a $[\omega_0,\omega]$ est une forme exacte sur $D_2$. 
Puis l'invariance par $\Gamma^2$ permet d'obtenir le caractère exacte sur $\H^2$.
Enfin le produit scalaire construit sur $V_{w_1,w_2}$ étant non dégénéré et la forme $\omega$ étant quelconque, on obtient le caractère exact de $\omega_0$ sur chacune des coordonnées:
\begin{equation*}
\omega_0 = \sum_{i_1=0}^{w_1}\sum_{i_2=0}^{w_2} \left(\d F_{i_1,i_2}\right) X_1^{i_1} X_2^{i_2} = \d\left(\sum_{i_1=0}^{w_1}\sum_{i_2=0}^{w_2} F_{i_1,i_2} X_1^{i_1} X_2^{i_2}\right).
\end{equation*}
Donc la forme $\omega_0:\H^2\to V_{w_1,w_2}$ est ainsi exacte.\par

On dispose alors d'un résultat issue du Théorème de décomposition de Hodge (Voir de Rham \cite{Rham}):	
\begin{lem}\label{lem1}
On a la décomposition en somme directe orthogonale:
\begin{equation}
\Omega_{k_1,k_2} = \Omega_{k_1,k_2}^{\harm}\oplus\Omega_{k_1,k_2}^0 =\Omega_{k_1,k_2}^{+}\oplus\Omega_{k_1,k_2}^{-}\oplus\Omega_{k_1,k_2}^0.
\end{equation}
\end{lem}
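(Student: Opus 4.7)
The plan is to combine bidegree considerations for the Petersson form $(\cdot,\cdot)_P$ with the Hodge decomposition cited from \cite{Rham}. First, I would verify that the three subspaces $\Omega_{k_1,k_2}^{+}$, $\Omega_{k_1,k_2}^{-}$ and $\Omega_{k_1,k_2}^{0}$ are pairwise orthogonal. The orthogonality of $\Omega_{k_1,k_2}^{0}$ to all of $\Omega_{k_1,k_2}$ is already furnished by the Stokes computation preceding the lemma, so only $\Omega_{k_1,k_2}^{+}\perp\Omega_{k_1,k_2}^{-}$ remains. Viewing $\H^2$ as a complex surface, any element of $\Omega_{k_1,k_2}^{+}$ has bidegree $(2,0)$ and any element of $\overline{\Omega_{k_1,k_2}^{-}}=\Omega_{k_1,k_2}^{+}$ also has bidegree $(2,0)$, so the integrand $[\omega_{+},\overline{\omega_{-}}]$ has total bidegree $(4,0)$ and vanishes identically on a surface of complex dimension two.

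Second, I would deduce the directness of the sum from the non-degeneracy of $(\cdot,\cdot)_P$ when restricted to $\Omega_{k_1,k_2}^{+}$, and likewise to $\Omega_{k_1,k_2}^{-}$. Via the formula $(\omega_{f_1,f_2},\omega_{g_1,g_2})_P=(f_1,g_1)(f_2,g_2)$ and the non-degeneracy of the classical Petersson pairings on $S_{k_1}$ and $S_{k_2}$, the tensor pairing is non-degenerate on $\Omega_{k_1,k_2}^{+}\simeq S_{k_1}\otimes S_{k_2}$; the same holds on $\Omega_{k_1,k_2}^{-}$ by complex conjugation. Given $\omega_{+}+\omega_{-}+\omega_{0}=0$ with $\omega_{\pm}\in\Omega_{k_1,k_2}^{\pm}$ and $\omega_{0}\in\Omega_{k_1,k_2}^{0}$, pairing successively against arbitrary elements of $\Omega_{k_1,k_2}^{+}$ and $\Omega_{k_1,k_2}^{-}$ forces $\omega_{+}=\omega_{-}=0$ thanks to the pairwise orthogonality already established, and then $\omega_{0}=0$.

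The remaining inclusion $\Omega_{k_1,k_2}\subset\Omega_{k_1,k_2}^{\harm}+\Omega_{k_1,k_2}^{0}$ is the heart of the matter, and is where the Hodge decomposition imported from \cite{Rham} enters. Any closed $\omega\in\Omega_{k_1,k_2}$ should admit a unique harmonic representative $\omega_{\mathrm{harm}}$ in its cohomology class on the Kähler quotient $\Gamma^2\backslash\H^2$ with $V_{w_1,w_2}$-coefficients, with $\omega-\omega_{\mathrm{harm}}$ exact. The transversality conditions $\varphi_0^{*}\omega=\varphi_1^{*}\omega=\varphi_2^{*}\omega=0$ combined with the vanishing at the cusps $\pte^2$ are designed to kill the mixed $(1,1)$ harmonic contributions that otherwise appear on a product of two copies of $\H$, leaving only the $(2,0)+(0,2)$ part; an Eichler-Shimura identification applied in each factor then matches this remaining harmonic space with $\Omega_{k_1,k_2}^{+}\oplus\Omega_{k_1,k_2}^{-}=\Omega_{k_1,k_2}^{\harm}$. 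The main obstacle is precisely this third step: since $\Gamma^2\backslash\H^2$ is non-compact, the standard compact-Kähler Hodge theorem does not apply directly, and one must invoke a cuspidal/parabolic version and verify carefully that the transversality and parabolicity conditions are exactly what selects the desired harmonic subspace — this is the technical content borrowed from de Rham's book.
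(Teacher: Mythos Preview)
Your first two steps (pairwise orthogonality via bidegree and Stokes, and directness via non-degeneracy of the classical Petersson pairing on $S_{k_1}\otimes S_{k_2}$) are correct and match what the paper uses implicitly. The difference lies in your third step. You treat the spanning inclusion $\Omega_{k_1,k_2}\subset\Omega_{k_1,k_2}^{\harm}+\Omega_{k_1,k_2}^{0}$ as requiring a cuspidal Hodge decomposition on the non-compact quotient $\Gamma^2\backslash\H^2$, together with an Eichler--Shimura argument in each factor to eliminate the $(1,1)$ component. The paper avoids this machinery entirely: it has already established, just before the lemma, that $\Omega_{k_1,k_2}^{0}$ coincides with the radical of the order-$2$ Petersson form $(\cdot,\cdot)_P$. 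With that in hand, the proof is a one-line projection: pick Hecke eigenbases $\mathcal{B}_1,\mathcal{B}_2$ of $S_{k_1},S_{k_2}$, set
\[
\omega_{\pm}=\sum_{f_1\in\mathcal{B}_1}\sum_{f_2\in\mathcal{B}_2}\frac{(\omega,\omega_{f_1,f_2}^{\pm})_P}{(f_1,f_1)(f_2,f_2)}\,\omega_{f_1,f_2}^{\pm},
\]
and observe that $\omega_0=\omega-\omega_+-\omega_-$ is orthogonal to $\Omega_{k_1,k_2}^{\harm}$, hence lies in the radical, hence in $\Omega_{k_1,k_2}^{0}$.

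What each approach buys: the paper's route is elementary once the identification ``radical $=$ exact forms'' is granted, and the transversality hypotheses $\varphi_j^*\omega=0$ play no role in the proof of the lemma itself (they are only used later). Your route would give a more conceptual explanation of \emph{why} the harmonic part is exactly $(2,0)\oplus(0,2)$ and would tie the transversality conditions directly to the vanishing of $(1,1)$ contributions, but at the cost of invoking non-compact Hodge theory and a fibrewise Eichler--Shimura identification that the paper never spells out. In short, your plan is sound but heavier than necessary; the paper's shortcut is to front-load the analytic content into the kernel characterisation and then finish by linear algebra.
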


\begin{proof}
On considère une forme $\omega\in\Omega_{k_1,k_2}$. On commence par en extraire la partie harmonique grâce au produit de Petersson d'ordre $2$.
On considère des bases $\mathcal{B}_1$ et $\mathcal{B}_2$ de $S_{k_1}$ et $S_{k_2}$ constituées de formes propres pour les opérateurs de Hecke.
On sait en particulier qu'ils constituent des bases orthogonales pour le produit scalaire de Petersson classique.
Considérons les projections orthogonales :
\begin{align}
\omega_+ &= \sum_{f_1\in\mathcal{B}_1}\sum_{f_2\in\mathcal{B}_2} \frac{(\omega,\omega_{f_1,f_2})_P}{(f_1,f_1)(f_2,f_2)}\omega_{f_1,f_2}\in \Omega_{k_1,k_2}^+,\\
\omega_- &= \sum_{f_1\in\mathcal{B}_1}\sum_{f_2\in\mathcal{B}_2} \frac{(\omega,\overline{\omega_{f_1,f_2}})_P}{(f_1,f_1)(f_2,f_2)}\overline{\omega_{f_1,f_2}}\in \Omega_{k_1,k_2}^-.
\end{align}

Reste à démontrer que $\omega_0 = \omega-\omega_+-\omega_-$ est une forme exacte pour obtenir la décomposition voulue. 
On démontre tour-à-tour que pour $(f_1,f_2)\in\mathcal{B}_1\times\mathcal{B}_2$, on a: $(\omega_0,\omega_{f_1,f_2})_P=(\omega_0,\overline{\omega_{f_1,f_2}})_P=0$.
La forme est orthogonale à $\Omega_{k_1,k_2}^{\harm}$ donc on obtient $\omega_0\in\Omega_{k_1,k_2}^0$.
\end{proof}

On peut relier ces $2$-formes aux $2$-cycles transverses et en déduire la Proposition \ref{prop1}.
\begin{prop}\label{incljkk}
On dispose de la relation d'orthogonalité pour $k_1,k_2\geq 6$: 
\begin{equation}
M_2^{0}(\H^2,\Z) = M_2^{\tr}(\H^2,\Z)\cap\left(\Omega_{k_1,k_2}\right)^{\bot}.
\end{equation}
On peut en déduire les inclusions des idéaux puis des espaces :
\begin{equation}
\mathcal{J}_2\subset\mathcal{J}(k_1,k_2)\quad\text{ et }\quad \Per_{w_1,w_2}\subset V_{w_1,w_2}[\mathcal{J}_2].
\end{equation}
\end{prop}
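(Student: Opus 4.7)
Le plan est de d\'emontrer l'orthogonalit\'e $M_2^0(\H^2,\Z) = M_2^{\tr}(\H^2,\Z)\cap(\Omega_{k_1,k_2})^\bot$ par double inclusion, puis d'en d\'eduire formellement les deux inclusions annonc\'ees.

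Pour l'inclusion directe $M_2^0\subset M_2^{\tr}\cap(\Omega_{k_1,k_2})^\bot$, soit $C\in M_2^0$~; l'appartenance \`a $M_2^{\tr}$ est imm\'ediate d'apr\`es (\ref{defM2tr}), (\ref{defM2tr0}). Pour l'orthogonalit\'e \`a toute $\omega\in\Omega_{k_1,k_2}$ ferm\'ee, on invoque la d\'ecomposition (\ref{sumM2tr0}). La composante dans $\Ker\delta_2$ s'int\`egre \`a z\'ero par Stokes~: la suite exacte longue du couple $(\H^2,\pte^2)$, jointe \`a la contractibilit\'e de $\H^2$ et \`a la discr\'etude de $\pte^2$, donne $H_2^{pte}(\H^2,\Z)=0$~; tout $2$-cycle est alors un bord de $3$-cha\^ine, et $\d\omega=0$ conclut. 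Sur une cha\^ine tranche $\psi_\gamma\varphi_j(c)$, en d\'eveloppant $\omega$ sur une base monomiale de $V_{w_1,w_2}$, la transversalit\'e $\varphi_j^*\omega=0$ entra\^ine la nullit\'e de chaque composante scalaire~; par $\Gamma^2$-invariance on obtient $(\psi_\gamma\varphi_j)^*\omega=\varphi_j^*(\gamma.\omega)=0$, et l'int\'egrale est nulle.

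Pour la r\'eciproque $M_2^{\tr}\cap(\Omega_{k_1,k_2})^\bot\subset M_2^0$, soit $C$ dans le membre de gauche et d\'ecomposons $\delta_2 C=h+v+d$ avec $h\in H$, $v\in V$, $d\in D$. On teste $\int_C\omega=0$ contre les formes exactes $\omega_{w_1}^0\wedge\omega_{f_2}$ de l'Exemple b), munies des primitives $\Gamma^2$-invariantes $\alpha_{w_1,f_2}=F_{w_1}(z_1,X_1)\omega_{f_2}(z_2)$~; Stokes fournit $\int_{\delta_2 C}\alpha_{w_1,f_2}=0$. Le facteur $\omega_{f_2}$ se restreint en z\'ero sur toute cha\^ine de $H$ (o\`u $\d z_2=0$), supprimant la contribution de $h$~; le proc\'ed\'e sym\'etrique avec $\omega_{f_1}\wedge\omega_{w_2}^0$ \'elimine celle de $v$. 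Combin\'ees \`a la relation $\delta_1 h+\delta_1 v+\delta_1 d=0$ issue de $\delta_1\delta_2 C=0$, les identit\'es r\'esultantes, en faisant varier $f_1,f_2$ dans des bases de $S_{k_1},S_{k_2}$ (non triviales gr\^ace \`a $k_1,k_2\geq 6$) ainsi que les poids $w_1,w_2$ admissibles, forcent $\delta_1 h=\delta_1 v=\delta_1 d=0$, d'o\`u $C\in M_2^0$. C'est la partie la plus d\'elicate, en raison de la gestion fine des annulations de bord entre les trois familles de cha\^ines transverses, qui partagent des extr\'emit\'es aux cusps communs.

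L'orthogonalit\'e \'etant acquise, les deux inclusions annonc\'ees en d\'ecoulent directement. D'apr\`es (\ref{defI2cycletr}) et le Corollaire \ref{propJkk}, $g\in\mathcal{J}_2$ \'equivaut \`a $g.\tau_2\in M_2^0$, et $g\in\mathcal{J}(k_1,k_2)$ \'equivaut \`a $g.\tau_2\in(\Omega_{k_1,k_2}^{\harm})^\bot$. L'inclusion directe, jointe \`a $\Omega_{k_1,k_2}^{\harm}\subset\Omega_{k_1,k_2}$, donne $M_2^0\subset(\Omega_{k_1,k_2}^{\harm})^\bot$, d'o\`u $\mathcal{J}_2\subset\mathcal{J}(k_1,k_2)$. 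Pour la seconde inclusion : tout $P_{f_1,f_2}\in\Per_{w_1,w_2}$ et tout $g\in\mathcal{J}_2\subset\mathcal{J}(k_1,k_2)$ v\'erifient $g.P_{f_1,f_2}=0$ par d\'efinition de $\mathcal{J}(k_1,k_2)$, donc $P_{f_1,f_2}\in V_{w_1,w_2}[\mathcal{J}_2]$.
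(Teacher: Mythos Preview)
Your direct inclusion $M_2^0\subset M_2^{\tr}\cap(\Omega_{k_1,k_2})^\bot$ and the final derivation of the two inclusions $\mathcal{J}_2\subset\mathcal{J}(k_1,k_2)$ and $\Per_{w_1,w_2}\subset V_{w_1,w_2}[\mathcal{J}_2]$ match the paper's argument essentially verbatim.

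The gap is in your reverse inclusion. You test $C$ only against the exact forms $\omega_{w_1}^0\wedge\omega_{f_2}$ and $\omega_{f_1}\wedge\omega_{w_2}^0$ with $f_j\in S_{k_j}$, and then claim that varying $f_1,f_2$ over bases of $S_{k_1},S_{k_2}$, ``non triviales gr\^ace \`a $k_1,k_2\geq 6$'', supplies enough functionals to force $\delta_1 h=\delta_1 v=\delta_1 d=0$. This assertion is false: one has $\dim S_k=0$ for every even $k\leq 10$, so for $k_1\in\{6,8,10\}$ or $k_2\in\{6,8,10\}$ your family of test forms is empty and the argument yields nothing. Your fallback of ``varier les poids $w_1,w_2$ admissibles'' is not available either: the statement is for fixed $(k_1,k_2)$, and $\omega_{w}^0$ takes values in $V_w$, so changing the weight leaves $\Omega_{k_1,k_2}$.

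The paper avoids this by testing against $\omega_{w_1}^0\wedge\omega$ for \emph{all} $\omega$ in the one-variable space $\Omega_{k_2}$ of closed $\Gamma$-invariant $V_{w_2}$-valued $1$-forms, not only those of the shape $\omega_{f_2}$. This space contains the Eisenstein-type form $\omega_{w_2}^0$ itself and is therefore nontrivial regardless of $\dim S_{k_2}$; the hypothesis $k_j\geq 6$ is tied to the availability of this Eisenstein construction, not to the existence of cusp forms. Stokes then gives $\langle F_{w_1}\,\omega,\,d+v\rangle=0$ for every such $\omega$, and the paper invokes the one-variable theory to deduce $d+v\in(D+V)^0$, hence $h\in H^0$; the symmetric test with $\omega\wedge\omega_{w_2}^0$ gives $v\in V^0$, and $d\in D^0$ follows. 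Your scheme of separating $h$ and $v$ via the vanishing of $\d z_2$ and $\d z_1$ is the right geometric idea, but you need the full supply of one-variable test forms $\omega\in\Omega_{k_j}$ rather than the (possibly empty) cuspidal subspace to carry it through.
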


\begin{proof}
Pour la démonstration, on utilise la forme bilinéaire non dégénérée entre $\Omega^2$ et $M_2$ définie en (\ref{defacc}).
On peut commencer par considérer une chaîne $C \in M_2^{\tr}(\H^2,\Z)$ annulé par toutes les $2$-formes de $\Omega_{k_1,k_2}$.
Son bord est transverse et s'écrit : $\delta_2 C = h + d + v \in (H+D+V)^0$.
On remarque que l'espace des formes contient au moins les formes d'Eisenstein et donc pour tout $\omega\in\Omega_{k_2}$:
\begin{equation}
\langle \omega_{w_1}^0\wedge \omega, C \rangle = 0 \ra \langle F_{w_1} \omega, d+v \rangle = 0 \ra d+v\in (D+V)^0 \ra h \in H^0.
\end{equation}
De la même manière en utilisant l'hypothèse sur les formes exactes du type $\omega\wedge\omega_{w_2}^0\in\Omega_{k_1,k_2}^0$, on retrouve la cas des $1$-formes. 
On en déduit que $h+d\in (H+D)^0$ puis $v\in V^0$. 
Ainsi on trouve également $d\in D^0$ puis $C\in M_2^{0}(\H^2,\Z)$.\par

Réciproquement, on remarque que pour tout $0\leq j \leq 2$ :
\begin{equation}
\varphi_j( M_2^{pte}(\H,\Z)) \subset \left\{\omega:\H^2\to\C|\varphi_j^*\omega=0\right\}^{\bot}.
\end{equation}
En effet, pour $\omega\in\Omega^2(\H^2,\C)$ telle que $\varphi_j^*\omega=0$ et $C\in M_2^{pte}(\H,\Z)$, on a:
$\langle \omega, \varphi_j C\rangle = \langle \varphi_j^{*} \omega, C\rangle = 0.$
Ceci donne bien $\varphi_j C\in \{\varphi_j^*\omega=0\}^{\bot}$.\par

Par ailleurs, dans $M_2(\H^2,\Z)$, on a l'inclusion:
\begin{equation}
\Ker(\delta_2|M_2(\H^2,\Z))=\Im(\delta_3|M_2(\H^2,\Z)) \subset \left\{\omega:\H^2\to\C|\d\omega=0\right\}^{\bot}.
\end{equation}
En effet, l'homologie de la surface de Poincaré donne $H_2(\H^2,\Z)=0$ car $\H^2$ est de dimension $4$.
Puis, d'après le théorème de Stokes, pour $\omega:\H^2\to \C$ une $2$-forme fermée et $C\in M_3^{pte}(\H^2,\Z)$, on obtient :
$\langle \omega, \delta_3 C\rangle = \langle \d \omega, C\rangle = 0.$
C'est à dire $\delta_3 C \in \{\d\omega=0\}^{\bot}$.\par

Et ainsi, on en déduit l'inclusion :
\begin{equation*}
\Ker(\delta_2)+\sum_{j=0}^2 \varphi_j( M_2^{pte}(\H,\Z))  \subset \{\omega:\H^2\to\C|\varphi_0^*\omega=\varphi_1^*\omega=\varphi_2^*\omega=d\omega=0\}^{\bot}.
\end{equation*}
Cette dernière reste valide après tensorisation par $V_{w_1,w_2}^{\Q}$ car chacune des opérations commutent. En effet, on a :
\begin{align}
\left\langle \sum_{m_1,m_2}\omega_{m_1,m_2} X_1^{m_1}X_2^{m_2}, C\right\rangle &= \sum_{m_1,m_2}\langle\omega_{m_1,m_2},C\rangle X_1^{m_1}X_2^{m_2},\\
 \d\left(\sum_{m_1,m_2}\omega_{m_1,m_2} X_1^{m_1}X_2^{m_2}\right) &= \sum_{m_1,m_2}\d\omega_{m_1,m_2} X_1^{m_1}X_2^{m_2}\text{ et }\\
\varphi_j^*\left(\sum_{m_1,m_2}\omega_{m_1,m_2} X_1^{m_1}X_2^{m_2}\right) &= \sum_{m_1,m_2}\varphi_j^*\omega_{m_1,m_2} X_1^{m_1}X_2^{m_2}.
\end{align}
Donc l'annulation de chacune des coordonnées donne l'orthogonale:
\begin{equation*}
\Ker(\delta_2)+\sum_{j=0}^2 \varphi_j( M_2^{pte}(\H,\Z)) \subset \{\omega:\H^2\to V_{w_1,w_2}|\varphi_0^*\omega=\varphi_1^*\omega=\varphi_2^*\omega=d\omega=0\}^{\bot}.
\end{equation*}
Il reste à introduire l'invariance par $\Gamma^2$. Au niveau des $2$-cycles cela permet d'obtenir:
\begin{equation}
\sum_{\gamma\in\Gamma^2}\psi_{\gamma}\left(\Ker(\delta_2)+\sum_{j=0}^2 \varphi_j( M_2^{pte}(\H,\Z))\right)=M_2^{0}(\H^2,\Z).
\end{equation}
En effet, on utilise la linéarité et l'invariance du noyau $\psi_{\gamma}(\Ker(\delta_2))=\Ker(\delta_2)$ pour trouver la définition (\ref{sumM2tr0}).

Puis du coté des $2$-formes différentielles, on trouve:
\begin{equation}
\left(\bigcap_{\gamma\in\Gamma^2}\left\{\omega:\H^2\to V_{w_1,w_2}|\varphi_0^*\omega=\varphi_1^*\omega=\varphi_2^*\omega=d\omega=0\text{ et }\psi^*\omega=\gamma.\omega\right\} \right)^{\bot} = \left(\Omega_{k_1,k_2}\right)^{\bot}.
\end{equation}\par

Pour $g\in\mathcal{J}_2$, on a : $g\delta_2\tau_2\in M_2^{0}(\H^2,\Z)$ par (\ref{defI2cycle}). 
Or on sait que $\Omega_{k_1,k_2}^{\harm}\subset\Omega_{k_1,k_2}$. 
Donc les orthogonaux donnent : $g\tau_2\in \left(\Omega_{k_1,k_2}\right)^{\bot}\subset \left(\Omega_{k_1,k_2}^{\harm}\right)^{\bot}$.
Par la Proposition \ref{propJkk}, ceci donne bien $g\in\mathcal{J}({k_1,k_2})$.\par

Pour un idéal à gauche $I\subset\Z[\Gamma^2]$, on dispose de l'annulateur:
\begin{equation}
V_{w_1,w_2}[I] = \left\{P\in V_{w_1,w_2}\text{ tel que }g.P=0,\,\forall g\in I\right\}.
\end{equation}
L'inclusion des idéaux permet d'obtenir alors l'inclusion des annulateurs:
\begin{equation}
\Per_{w_1,w_2}\subset V_{w_1,w_2}[\mathcal{J}(k_1,k_2)]\subset V_{w_1,w_2}[\mathcal{J}_2].
\end{equation}
\end{proof}

Une fois prouvé que $\mathcal{J}_2=\mathcal{I}_2$, on obtiendra donc la Proposition \ref{prop1}. 

\subsection{Démonstration du Théorème $1$}

On commence par motiver la construction de $\mathcal{J}(k_1,k_2)$.

\begin{prop}\label{propnew}
L'ensemble $V_{w_1,w_2}^{\Q}[\mathcal{J}(k_1,k_2)]$ est le plus petit $\Q$-espace tel que son extension aux complexes contient les polynômes des bi-périodes. 
\end{prop}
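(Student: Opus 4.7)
The inclusion $\Per_{w_1,w_2} \subseteq V_{w_1,w_2}^{\Q}[\mathcal{J}(k_1,k_2)] \otimes \C$ is tautological: by the very definition of $\mathcal{J}(k_1,k_2)$ as the $\Z[\Gamma^{2}]$-annihilator of the family of bi-period polynomials, every $g \in \mathcal{J}(k_1,k_2)$ kills each $P_{f_1,f_2}$ and hence their complex span $\Per_{w_1,w_2}$. Since the $\Z[\Gamma^{2}]$-action is $\Q$-linear and $\C$ is flat over $\Q$, base change commutes with the common-kernel construction, giving $V_{w_1,w_2}^{\Q}[\mathcal{J}(k_1,k_2)] \otimes \C = V_{w_1,w_2}[\mathcal{J}(k_1,k_2)] \supseteq \Per_{w_1,w_2}$. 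So $V_{w_1,w_2}^{\Q}[\mathcal{J}(k_1,k_2)]$ is indeed one $\Q$-subspace with the required property.

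For the minimality, the plan is a Galois-connection argument. Let $W^{\Q}$ be any $\Q$-subspace of $V_{w_1,w_2}^{\Q}$ with $W := W^{\Q} \otimes \C \supseteq \Per_{w_1,w_2}$, and form its annihilator
\[
I_{W} := \{g \in \Z[\Gamma^{2}] : g.P = 0 \text{ for all } P \in W^{\Q}\},
\]
which by $\Q$-linearity of the action also coincides with the annihilator of $W$. The hypothesis $W \supseteq \Per_{w_1,w_2}$ forces $I_{W} \subseteq \mathcal{J}(k_1,k_2)$, and by the anti-monotonicity of $I \mapsto V_{w_1,w_2}^{\Q}[I]$ we obtain
\[
V_{w_1,w_2}^{\Q}[\mathcal{J}(k_1,k_2)] \subseteq V_{w_1,w_2}^{\Q}[I_{W}].
\]
The proof thus reduces to the biduality statement $V_{w_1,w_2}^{\Q}[I_{W}] = W^{\Q}$.

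This biduality is the main obstacle. The inclusion $W^{\Q} \subseteq V_{w_1,w_2}^{\Q}[I_{W}]$ is tautological; the reverse requires the $\Z[\Gamma^{2}]$-module structure on $V_{w_1,w_2}^{\Q}$ to be sufficiently rich. The approach I would take is to exploit the non-degenerate Haberland-type pairing $[\cdot,\cdot]$ on $V_{w_1,w_2}^{\Q}$ (obtained by tensoring the one-variable pairing of Eq.~(\ref{Pol1})), which is $\Gamma^{2}$-invariant up to the anti-involution $\gamma \mapsto \gamma^{-1}$ on $\Z[\Gamma^{2}]$. Through this pairing, $(V_{w_1,w_2}^{\Q})^{*}$ is identified with $V_{w_1,w_2}^{\Q}$ as a $\Z[\Gamma^{2}]$-module up to twisting, so any $\Q$-linear functional vanishing on $W^{\Q}$ is represented by a vector of the $\Q$-orthogonal $W^{\Q,\perp}$. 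Translating the vanishing of all such functionals on a candidate $P \in V_{w_1,w_2}^{\Q}[I_{W}] \setminus W^{\Q}$ into the action of $I_{W}$, and using that the image of $\Z[\Gamma^{2}]$ in $\mathrm{End}_{\Q}(V_{w_1,w_2}^{\Q})$ is a finite-dimensional algebra whose subspace-annihilator correspondence is bidual, produces some $g \in I_{W}$ with $g.P \neq 0$, contradicting $P \in V_{w_1,w_2}^{\Q}[I_{W}]$ and closing the proof.
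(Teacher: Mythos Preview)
Your overall Galois-connection strategy is correct and matches the paper's approach, but the crucial biduality step $V_{w_1,w_2}^{\Q}[I_{W}] = W^{\Q}$ is not justified. You invoke that ``the image of $\Z[\Gamma^{2}]$ in $\mathrm{End}_{\Q}(V_{w_1,w_2}^{\Q})$ is a finite-dimensional algebra whose subspace-annihilator correspondence is bidual'', but for a general finite-dimensional subalgebra $A \subset \mathrm{End}_{\Q}(V)$ this is false: biduality holds for \emph{all} subspaces (not just $A$-stable ones) precisely when $A = \mathrm{End}_{\Q}(V)$. The Haberland pairing gives you a vector $Q \in W^{\Q,\perp}$ with $[P,Q] \neq 0$, but does not by itself convert $Q$ into a group-ring element $g \in I_W$ with $g.P \neq 0$; that conversion again requires surjectivity onto $\mathrm{End}_{\Q}$.

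The paper fills exactly this gap: it invokes Burnside's theorem, using that the representation $\Gamma^2 \to GL(V_{w_1,w_2}^{\Q})$ is absolutely irreducible (a Vandermonde argument on the vectors $(T^i,T^j).X_1^{w_1}X_2^{w_2}=(X_1+i)^{w_1}(X_2+j)^{w_2}$ shows these span), so that $\Q[\Gamma^2] \to \mathrm{End}_{\Q}(V_{w_1,w_2}^{\Q})$ is surjective. Then for any $\Q$-subspace $E$ one can lift a projector with kernel $E$ to an element $g_E \in \Z[\Gamma^2]$, giving $V_{w_1,w_2}^{\Q}[\langle g_E\rangle] = E$ directly. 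Once you add this surjectivity statement your argument goes through; without it, the biduality is asserted rather than proved.
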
 

\begin{proof}
On commence par utiliser le fait que l'application de représentation $\Q[\Gamma^2]\to \mathrm{End}_{\Q}(V_{w_1,w_2}^{\Q})$ est surjective. 
Ceci est une conséquence du Théorème de Burnside \cite{Burnside} et du caractère absolument irréductible de la représentation $\Gamma^2\to GL(V_{w_1,w_2}^{\Q})$. 
En effet $(T^i,T^j).X_1^{w_1}X_2^{w_2}=(X_1+i)^{w_1}(X_2+j)^{w_2}$ est une famille génératrice car le déterminant d'une telle famille finie est le produit tensoriel de deux matrices de Vandermonde après renormalisation par les coefficients binomiaux.\par

Notons alors $E$ le plus petit $\Q$-espace tel que $\Per_{w_1,w_2}\subset E\otimes\C$. 
Il existe alors $g_E\in\Q[\Gamma^2]$ qui représente le projecteur $p_E\in \mathrm{End}_{\Q}(V_{w_1,w_2}^{\Q})$ le long de $E$ sur un supplémentaire. 
En particulier, $V_{w_1,w_2}^{\Q}\left[\langle g_E\rangle\right]=\{P\in V_{w_1,w_2}^{\Q}\text{ tel que }g_E.P=0\}=\Ker(p_E)=E$. 
Quitte à multiplier $g_E$ par un entier suffisamment grand, on peut supposer $g_E\in\Z[\Gamma^2]$ sans modifier l'annulateur.\par
L'élément $g_E$ annule $\Per_{w_1,w_2}$ donc appartient à $\mathcal{J}(k_1,k_2)$. 
Ainsi $\langle g_E\rangle\subset \mathcal{J}(k_1,k_2)$ permet de déduire $V_{w_1,w_2}^{\Q}[\mathcal{J}(k_1,k_2)]\subset V_{w_1,w_2}^{\Q}\left[\langle g_E\rangle\right]=E$.
Or l'extension aux complexes $V_{w_1,w_2}^{\C}[\mathcal{J}(k_1,k_2)]$ contient $\Per_{w_1,w_2}$ et $E$ a été supposé minimal démontrant $E=V_{w_1,w_2}^{\Q}[\mathcal{J}(k_1,k_2)]$.
\end{proof}

Pour obtenir le Théorème \ref{thm1}, il faut désormais ajouter la partie d'Eisenstein d'ordre $2$.
Dans la section suivante, on obtiendra de manière calculatoire les expressions des sous-modules de $V_{w_1,w_2}^{\Q}$:
\begin{align}
V_{w_1,w_2}^{\Q}[I_H]&=\{P_1(X_1)\text{ pour }P_1\in W_{w_1}^{\Q}\}=W_{w_1}^{\Q}\otimes 1,\\
V_{w_1,w_2}^{\Q}[I_D]&=\{P\in V_{w_1,w_2}^{\Q}\text{ tel que }P|_{(1,1)+(S,S)}=P|_{(1,1)+(U,U)+(U^2,U^2)}=0\},\\
\text{et }V_{w_1,w_2}^{\Q}[I_H]&=\{X_1^{w_1}P_2(X_2)\text{ pour }P_2\in W_{w_2}^{\Q}\}=X_1^{w_1}\otimes W_{w_2}^{\Q},
\end{align}
où l'on rappel que $W_w^{\Q}=\{P\in V_w^{\Q}\text{ tel que }P|_{1+S}=P|_{1+U+U^2}\}=V_w^{\Q}[\mathcal{I}_1]$.
Les notations d'annulateurs permettent d'avoir une nouvelle écriture de $E_{w_1,w_2}^{\Q}$, défini en (\ref{defEkk}):
\begin{equation}
E_{w_1,w_2}^{\Q}=V_{w_1,w_2}^{\Q}[I_H]+V_{w_1,w_2}^{\Q}[I_D]+V_{w_1,w_2}^{\Q}[I_V].
\end{equation}
 
On a démontré dans la Proposition \ref{incljkk} que l'idéal à gauche $\mathcal{J}_2$ est inclus dans $\mathcal{J}(k_1,k_2)$. 
D'autre part, la Proposition \ref{inclhvd} montre que $\mathcal{J}_2$ est inclus dans les idéaux $I_H,I_D$ et $I_V$ par construction. 
On en déduit l'inclusion des $\Q$-espaces:
\begin{equation}\label{inclQ}
V_{w_1,w_2}^{\Q}[\mathcal{J}(k_1,k_2)]+E_{w_1,w_2}^{\Q}\subset V_{w_1,w_2}^{\Q}[\mathcal{J}_2].
\end{equation}\par

Une conséquence de la Proposition \ref{propnew} est alors que $V_{w_1,w_2}^{\Q}[\mathcal{J}(k_1,k_2)]+E_{w_1,w_2}^{\Q}$ est le plus petit $\Q$-espace convenant aux conditions du Théorème \ref{thm1}. Il reste à démontrer que l'inclusion (\ref{inclQ}) est une égalité. On recherche l'égalité des dimensions de leurs extensions aux corps des complexes.\par

Pour un $2$-cycle $C\in M_2^{pte}(\H^2,\Z)$, on peut considérer l'application de courant le long de $C$ par : 
\begin{equation}
[C] : V_{w_1,w_2}\to \Hom_{\Q}\left(\Omega_{k_1,k_2},\C\right),\quad
Q\mapsto \left(\omega\mapsto\int_{C}[\omega,Q]\right).
\end{equation}
Alors, pour $Q\in V_{w_1,w_2}$ et $\gamma\in\Gamma^2$, on a:
\begin{equation}
[\gamma C](Q)(\omega) = \int_{\gamma C} [\omega,Q] = \int_{C} [\psi_{\gamma}^*\omega,Q] = \int_{C} [\omega,\gamma^{-1}Q] = [C](\gamma^{-1}Q)(\omega).
\end{equation}

Tout polynôme $Q\in V_{w_1,w_2}$ peut s'écrire $Q=g.X_1^{w_1}X_2^{w_2}$ pour un certain $g\in \C[\Gamma^2]$ d'après le caractère transitif de la représentation. 
Donc $[\tau_2](Q)=[g^{-1}\tau_2](X_1^{w_1}X_2^{w_2})$ et ainsi $[\tau_2](Q)=0$ ssi $g^{-1}\tau_2\in\Omega_{k_1,k_2}^{\bot}$. De plus, on a toujours $g^{-1}\tau_2\in M_2^{\tr}(\H^2,\Z)$.

En particulier, la Proposition \ref{incljkk} et la définition (\ref{defI2cycle}) permet d'obtenir: 
\begin{equation}
\Ker([\tau_2]) = \left\langle g^{-1}(X_1^{w_1}X_2^{w_2}) \text{ pour }g\in\mathcal{J}_2\right\rangle= \widetilde{\mathcal{J}_2} V_{w_1,w_2}.
\end{equation}

De plus $[\tau_2](Q)(\omega)=\left[\int_{\tau_2}\omega,Q\right]$.
Donc l'image de l'application :
\begin{equation}
\{\tau_2\}:\Omega_{k_1,k_2}\to V_{w_1,w_2},\quad \omega\mapsto \int_{\tau_2} \omega,
\end{equation}
vérifie $\Im\{\tau_2\}^{\bot}=\Ker([\tau_2])$ et ainsi $\Im\{\tau_2\}=\left(\widetilde{\mathcal{J}_2} V_{w_1,w_2}\right)^{\bot}$.

Or l'invariance par $\Gamma^2$ du produit scalaire (\ref{invg2}) ainsi que son caractère non dégénéré montre que l'orthogonal de $\widetilde{\mathcal{J}_2}V_{w_1,w_2}$ est $V_{w_1,w_2}[\mathcal{J}_2]$.\par

Ainsi, pour $P\in V_{w_1,w_2}[\mathcal{J}_2]$, il existe $\omega\in\Omega_{k_1,k_2}$ tel $P=\int_{\tau_2} \omega\in \Im\{\tau_2\}$.\par

On peut alors appliquer le Lemme \ref{lem1}, pour écrire la décomposition :
$$\omega = \omega_++\omega_-+\omega_0\in\Omega_{k_1,k_2}^{+}\oplus\Omega_{k_1,k_2}^{-}\oplus\Omega_{k_1,k_2}^0.$$

Ceci permet de décomposer $P=\int_{\tau_2}\omega_+ +\int_{\tau_2}\omega_- +\int_{\tau_2}\omega_0$. 
On obtient par définition $\int_{\tau_2}\omega_+\in\Per_{w_1,w_2}$ et $\int_{\tau_2}\omega_-\in\overline{\Per_{w_1,w_2}}$.
Puis on sait que $\omega_0=\d F$ est exacte donc:
\begin{equation}
\int_{\tau_2}\omega_0 = \int_{\partial\tau_2}F 
= \int_{\varphi_0\tau_1}F - \int_{\varphi_1\tau_1} F + \int_{\varphi_2\tau_1} F.
\end{equation}
Puis on peut remarquer que pour $g\in\Z[\Gamma^2], g.\int_{\varphi_j\tau_1}F = \int_{g\varphi_j\tau_1} F$ car on peut choisir $F:\H^2\to V_{w_1,w_2}$ comme étant $\Gamma^2$-invariante. 
Et ainsi d'après les définitions (\ref{defIH}), (\ref{defIV}) et (\ref{defID}), on obtient :
\begin{equation*}
\int_{\varphi_0\tau_1}F\in V_{w_1,w_2}[I_H], \int_{\varphi_1\tau_1}F\in V_{w_1,w_2}[I_D] \text{ et }\int_{\varphi_2\tau_1}F\in V_{w_1,w_2}[I_V].
\end{equation*}
Et ainsi on obtient $\int_{\tau_2}\omega_0 \in E_{w_1,w_2}$.\par
Ceci démontre l'inclusion réciproque de (\ref{inclQ}) sur $\C$. Mais toutes les applications sont $\Q$-linéaires donc on obtient l'égalité des dimensions sur $\Q$. Démontrant ainsi que sur $\Q$, on a:
\begin{equation}
V_{w_1,w_2}^{\Q}[\mathcal{J}(k_1,k_2)]+E_{w_1,w_2}^{\Q}= V_{w_1,w_2}^{\Q}[\mathcal{J}_2].
\end{equation}


\section{Calcul de l'idéal des relations $\mathcal{I}_2$}

L'idéal des relations de Manin $\mathcal{I}_1=\langle 1+S,1+U+U^2\rangle$ peut être introduit via une description homologique. On dispose de $\tau_1$ la $1$-chaîne de $\H$ liant les pointes $\infty$ à $0$. Elle peut être rapprochée du symbole modulaire fondamental $\{\infty,0\}$, voir Mazur \cite{Maz73}. On a alors :
\begin{equation}
\mathcal{I}_1=\{g\in\Z[\Gamma]\text{ tel que }g.\delta_1\tau_1=0\}.
\end{equation}


\subsection{Calcul de $I_H,I_V$ et $I_D$}

Déterminons $I_H,I_V$ et $I_D$ par calcul direct.\par

\begin{prop}
Ces idéaux sont de type fini et on a:
\begin{align}
I_H&=\Big\langle (1+S,1),(1+U+U^2,1),(1,1-T)\Big\rangle,\\
I_V&=\Big\langle (1,1+S),(1,1+U+U^2),(1-US,1)\Big\rangle,\\
\text{et }I_D&=\Big\langle (1,1)+(S,S),(1,1)+(U,U)+(U^2,U^2)\Big\rangle.
\end{align}
\end{prop}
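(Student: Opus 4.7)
Le plan est d'�tablir les trois �galit�s s�par�ment, chacune par un sch�ma en deux temps : v�rification ais�e que les g�n�rateurs propos�s annulent la cha�ne transverse correspondante � homologie pr�s, puis r�duction d'un �l�ment arbitraire de l'id�al � une combinaison explicite de ces g�n�rateurs. L'ingr�dient commun sera la caract�risation homologique $\mathcal{I}_1=\{g\in\Z[\Gamma]:g.\delta_1\tau_1=0\}=\langle 1+S, 1+U+U^2\rangle$ pos�e en t�te de section, combin�e aux d�compositions en sommes directes (\ref{sumh})--(\ref{sumd}). Pour la v�rification~: $(1,1-T).H_{id}(\tau_1)=0$ car $T.\infty=\infty$, tandis que $(1+S).\tau_1$ (resp. $(1+U+U^2).\tau_1$) est ferm�e dans $\H$ puisque $S$ �change $0$ et $\infty$ (resp. $U$ permute cycliquement $\infty, 0, 1$), d'o� l'appartenance des trois �l�ments cit�s � $I_H$~; les v�rifications pour $I_V$ et $I_D$ sont sym�triques, en utilisant $US\in\Gamma_0$.

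Pour la r�ciproque dans le cas $I_H$, soit $g=\sum_i n_i(\alpha_i,\beta_i)\in I_H$. L'identit� $(1,\beta T^{k-1}).(1,1-T)=(1,\beta T^{k-1})-(1,\beta T^{k})$, it�r�e dans les deux sens (en utilisant aussi $(1,T^{-1}).(1,1-T)=-(1,1-T^{-1})$), montre que modulo l'id�al � gauche engendr� par $(1,1-T)$, on peut substituer chaque $\beta_i$ par n'importe quel repr�sentant fix� de sa classe dans $\Gamma/\Gamma_\infty$. Apr�s regroupement, il existe donc des $a_h\in\Z[\Gamma]$ index�s par un syst�me de repr�sentants $h$ de $\Gamma/\Gamma_\infty$ tels que $g.H_{id}(\tau_1)=\sum_h H_h(a_h.\tau_1)$ modulo cet id�al. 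La d�composition (\ref{sumh}), combin�e au fait que les bords dans $\Z[\pte^2]$ se s�parent selon la seconde coordonn�e $h.\infty$, qui d�pend exactement de la classe de $h$ modulo $\Gamma_\infty$, impose alors la fermeture composante par composante~: $a_h.\delta_1\tau_1=0$, soit $a_h\in\mathcal{I}_1$. �crivant $a_h=\mu_h(1+S)+\nu_h(1+U+U^2)$ dans $\Z[\Gamma]$ et reconstituant dans $\Z[\Gamma^2]$, on exprime $g$ comme combinaison $\Z[\Gamma^2]$-lin�aire � gauche des trois g�n�rateurs de $I_H$. Le cas $I_V$ est strictement sym�trique, l'�l�ment $(1-US,1)$ assurant la normalisation modulo $\Gamma_0=\langle US\rangle$ dans la premi�re coordonn�e.

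Pour $I_D$, on effectue le changement de variables $(\alpha,\beta)=(\gamma_1,\gamma_2\gamma_1^{-1})$ sur $\Gamma^2$, sous lequel l'action diagonale devient transparente~: $(\alpha,\beta\alpha).D_{id}(\tau_1)=D_\beta(\alpha.\tau_1)$, l'indexation par $\beta\in\Gamma$ co�ncidant avec la d�composition (\ref{sumd}). Les deux g�n�rateurs propos�s sont tous deux concentr�s dans la seule composante $\beta=id$, o� ils r�alisent pr�cis�ment $1+S$ et $1+U+U^2$ en la variable $\alpha$. La multiplication � gauche par $(\mu,\beta\mu)$ transporte $(1,1)+(S,S)$ vers la composante index�e par $\beta$ sans affecter les autres, puisque $(\beta\mu)\mu^{-1}=(\beta\mu S)(\mu S)^{-1}=\beta$~; un calcul analogue vaut pour l'autre g�n�rateur. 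Ainsi, tout �l�ment $g=\sum_\beta g_\beta$ dont chaque composante $g_\beta$ appartient � $\mathcal{I}_1$ se reconstitue comme somme finie de multiples � gauche des deux g�n�rateurs, et on conclut comme pour $I_H$ apr�s �criture $g_\beta=\mu_\beta(1+S)+\nu_\beta(1+U+U^2)$.

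La principale difficult� technique se situe dans la justification, pour $I_D$, que la fermeture globale $g.D_{id}(\tau_1)\in D^0$ force effectivement chaque composante $g_\beta$ � appartenir � $\mathcal{I}_1$. Contrairement aux cas de $H^0$ et $V^0$, o� les bords des diff�rentes composantes vivent dans des sous-ensembles disjoints de $\Z[\pte^2]$ (r�partis selon la coordonn�e constante $h.\infty$ ou $h.0$), les stabilisateurs non triviaux des pointes ($\Gamma_p\neq 1$ pour tout $p\in\pte$) autorisent \textit{a priori} des compensations crois�es de bord entre composantes distinctes $\beta\neq\beta'$ v�rifiant $\beta'\beta^{-1}\in\Gamma_p$ pour une certaine pointe $p$. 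Il conviendra donc soit d'analyser finement les conditions d'annulation des bords dans $\Z[\pte^2]$, en exploitant la forme restreinte $g_\beta.\delta_1\tau_1=g_\beta.([0]-[\infty])$, soit de proc�der par r�currence structurelle sur le support de $g$ en r�duisant successivement par les g�n�rateurs, pour �liminer ces compensations dans le cas particulier des cha�nes de la forme $g.D_{id}(\tau_1)$.
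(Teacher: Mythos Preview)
Pour $I_H$ et $I_V$, votre d\'emarche est celle du papier~: regrouper selon la classe dans $\Gamma/\Gamma_\infty$ (resp.\ $\Gamma/\Gamma_0$), observer que les bords des diff\'erentes composantes vivent dans des fibres disjointes de $\pte^2$ index\'ees par la coordonn\'ee constante, et en conclure que chaque composante tombe dans $\mathcal{I}_1$. Votre normalisation pr\'ealable via $(1,1-T)$ n'est qu'une r\'eorganisation inoffensive de l'argument du papier.

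Pour $I_D$, vous reprenez \'egalement le changement de variables $\beta=\gamma_2\gamma_1^{-1}$ du papier, mais vous allez plus loin~: vous isolez pr\'ecis\'ement ce que la phrase \og il suffit dans ce cas de faire agir de mani\`ere diagonale $\mathcal{I}_1$\fg{} escamote, \`a savoir que les bords de composantes $D_\beta$ et $D_{\beta'}$ distinctes peuvent co\"\i ncider au (seul) point $p$ v\'erifiant $\beta^{-1}\beta'\in\Gamma_p$, de sorte que la fermeture de la somme ne force pas celle de chaque terme. Votre diagnostic est juste --- et la difficult\'e n'est pas seulement technique, elle est fatale \`a l'\'egalit\'e annonc\'ee. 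Posons
\[
g=(S,S)+(T^{-1},1)+\bigl(1,(US)^{-1}\bigr)+\bigl(T^{-1}S,(US)^{-1}S\bigr).
\]
Ses quatre morceaux diagonaux forment un quadrilat\`ere ferm\'e de sommets $(0,0)$, $(\infty,\infty)$, $(-1,0)$, $(\infty,-1)$ dans $\pte^2$~: on v\'erifie $g.\bigl([(0,0)]-[(\infty,\infty)]\bigr)=0$, donc $g\in I_D$. Pourtant sa composante en $\beta=1$ se r\'eduit au seul \'el\'ement $S$, qui n'appartient pas \`a $\mathcal{I}_1$. Comme tout \'el\'ement de l'id\'eal \`a gauche engendr\'e par $(1,1)+(S,S)$ et $(1,1)+(U,U)+(U^2,U^2)$ a chacune de ses $\beta$-composantes dans $\mathcal{I}_1$ (les deux termes de $x\cdot[(1,1)+(S,S)]$ partagent la m\^eme valeur de $\beta$), l'\'el\'ement $g$ n'y est pas. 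Aucun des deux rem\`edes que vous proposez ne peut donc aboutir \`a l'\'enonc\'e tel quel.

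Cette lacune est partag\'ee par la preuve du papier et touche en r\'ealit\'e l'\'enonc\'e lui-m\^eme~: avec la d\'efinition donn\'ee de $D^0$ comme sous-groupe des cha\^\i nes ferm\'ees de $D$, la troisi\`eme \'egalit\'e de la proposition est fausse. Une correction coh\'erente consiste \`a red\'efinir $D^0$ comme $\bigoplus_\beta D_\beta(\ker\delta_1^{\H})$ (cha\^\i nes ferm\'ees composante par composante)~; la proposition et la Proposition~\ref{inclhvd} deviennent alors correctes, et c'est cette version qui est effectivement utilis\'ee en aval, notamment dans le calcul de $V_{w_1,w_2}^{\Q}[I_D]$.
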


\begin{proof}
Commençons par le calcul de $I_H$. Soit $\sum_i \lambda_i(\gamma_1^i,\gamma_2^i)\in\Z[\Gamma^2]$. Son action sur $H_{id}(\tau_1)$ s'exprime ainsi:
$$\sum_i \lambda_i(\gamma_1^i,\gamma_2^i). H_{id}(\tau_1)=\sum_i \lambda_i H_{\gamma_2^i}(\gamma_1^i.\tau_1)\text{ par (\ref{acth})}.$$
Supposons cet élément dans $I_H$ alors:
$$\sum_{g\in\Gamma/\Gamma_{\infty}}\sum_{\gamma_2^i\in g\Gamma_{\infty}}\lambda_i H_{g}(\gamma_1^i.\tau_1)
=\sum_{g\in\Gamma/\Gamma_{\infty}}H_{g}\left(\sum_{\gamma_2^i\in g\Gamma_{\infty}} \lambda_i \gamma_1^i. \tau_1\right)\in H^0$$
Les espaces $H_g(M_1(\H,\Z))$ sont en somme directe d'après (\ref{sumh}). Donc pour chaque orbite $g\Gamma_{\infty}\in\Gamma/\Gamma_{\infty}$, on a:
$$H_{g}\left(\sum_{\gamma_2^i\in g\Gamma_{\infty}} \lambda_i \gamma_1^i. \tau_1\right)\in H^0,$$
et ainsi, on obtient:
$$\delta_1\sum_{\gamma_2^i\in g\Gamma_{\infty}} \lambda_i \gamma_1^i. \tau_1=\sum_{\gamma_2^i\in g\Gamma_{\infty}} \lambda_i \gamma_1^i. \delta_1\tau_1=0.$$

On déduit alors $\sum_{\gamma_2^i\in g\Gamma_{\infty}} \lambda_i \gamma_1^i\in\mathcal{I}_1=(1+S,1+U+U^2)$.
De plus, le stabilisateur de la pointe $\infty$ dans $\Gamma$ est:
$$\Gamma_{\infty}=<T^a>_{a\in\Z}\text{ où }T=\left(\begin{smallmatrix}1 & 1\\ 0 & 1 \end{smallmatrix}\right)=U^2S.$$
On obtient ainsi:
\begin{equation*}
I_H=\Big\langle (1,1)+(S,T^a),(1,1)+(U,T^a)+(U^2,T^b)\Big\rangle_{a,b\in\Z}
\end{equation*}

Cet idéal est en faite de type fini. Les relations de Manin agissant sur la première coordonnée et la stabilité de la pointe $\infty$ sur la seconde, on peut les scinder suivant:
$$(1,1)+(S,T^a)=(1+S,1)-(S,1-T^a),$$
$$\text{et }(1,1)+(U,T^a)+(U^2,T^b)=(1+U+U^2,1)-(U,1-T^a)-(U^2,1-T^b).$$
Et donc les éléments $(1+S,1)$, $(1+U+U^2,1)$ et $(1,1-T)$ engendrent $I_H$.\par

De la même manière, on réduit le calcul de $I_V$ à celui du stabilisateur de $0$:
$$\Gamma_0=<(US)^a>_{a\in\Z}\text{ où }US=\left(\begin{smallmatrix}1 & 0\\ 1 & 1 \end{smallmatrix}\right).$$
On en déduit:
\begin{equation*}
I_V=\Big\langle (1,1)+((US)^a,S),(1,1)+((US)^a,U)+((US)^b,U^2)\Big\rangle_{a,b\in\Z}
\end{equation*}
Une réécriture de cette famille de ces générateurs donne de la même manière le résultat.\par

Pour le calcul de $I_D$, on rappel que $(\gamma_1,\gamma_2).D_I(c)=D_{\gamma_2\gamma_1^{-1}}(\gamma_1.c)$.
On peut alors réduire l'action de $\Z[\Gamma^2]$ à celle de $\Z[\Gamma]$ via:
$$\sum \lambda_i (\gamma_1^i,\gamma_2^i).D_I(\tau_1)=\sum_g D_g\left(\sum_{g=\gamma_2\gamma_1^{-1}}\lambda_i \gamma_1^i.\tau_1\right).$$

Il suffit dans ce cas de faire agir de manière diagonale $\mathcal{I}_1$ pour obtenir le résultat.
\end{proof}

\subsection{Homologie relative de $PSL_2(\Z)^2$}

Dans la théorie de Manin-Drinfeld \cite{Ma2,Dr1} d'une forme modulaire, on pouvait oublier l'espace $\H$ en étudiant uniquement l'action de $\Gamma$ sur les pointes $\pte$. On rappelle brièvement cette construction pour ensuite la généraliser au cas de l'action de $\Gamma^2$ sur $\H^2$.\par

On définit le groupe $\Z[\pte]^0$ des $1$-chaînes fermées comme étant le noyau de l'application:
$$\Z[\pte]\to\Z,\quad \sum_i \lambda_i [a_i]\mapsto \sum_i \lambda_i.$$
Le Théorème de Manin donne alors la surjectivité de l'application:
\begin{equation}\label{Theta1}
\Theta_1:\Z[\Gamma]\to\Z[\pte]^0,\quad \gamma\to [\gamma.\infty]-[\gamma.0].
\end{equation}
Le noyau de cette application est l'idéal $\mathcal{I}_1$ des relations de Manin.

\subsubsection{Construction du groupe formel des $m$-chaînes de $\pte^2$}

Notons $P_2=\pte^2$ les sommets de $\H^2$. 
L'idée est de traduire la correspondance donnée, pour tout entier $m> 0$, par:
\begin{equation}
M_m^{pte}(\H^2,\Z)\to\Z[P_2^{m+1}],\quad C\mapsto \left(C(e_0),...,C(e_m)\right).
\end{equation}
Pour $m\in\Z_{>0}$, on définit l'application de bord:
$$\delta_{m}:\Z[P_2^{m+1}]\to\Z[P_2^m],\quad \left[(p_1,q_1),...,(p_{m+1},q_{m+1})\right]\mapsto\sum_{j=1}^{m+1}(-1)^j\left[...,(p_{j-1},q_{j-1}),(p_{j+1},q_{j+1}),...\right].$$
On a $\delta_{m}\circ\delta_{m+1}=0$ ainsi que l'exactitude de la suite longue:
$$...\stackrel{\delta_{m+1}}{\longrightarrow}\Z[P_2^{m+1}]\stackrel{\delta_{m}}{\longrightarrow}\Z[P_2^m]\stackrel{\delta_{m-1}}{\longrightarrow}...
\stackrel{\delta_1}{\longrightarrow}\Z[P_2]\stackrel{\delta_{0}}{\longrightarrow}\Z\longrightarrow 0.$$
Ceci permet de définir, pour tout entier $m\geq 0$, le sous-groupe:
\begin{equation}
\Z[P_2^{m+1}]^0=\Ker(\delta_{m})=\Im(\delta_{m+1}).
\end{equation}

Pour $0\leq j\leq 2$, considérons la famille d'applications $\varphi_j^g:\H\to\H^2$ généralisant les applications définies en (\ref{applitrans}) et dépendant d'un paramètre $g\in\Gamma$ par: 
\begin{equation}
\varphi_0^g(z)=(g.\infty,z),\quad\varphi_1^g(z)=(z,g.z)\text{ et }\varphi_2^g(z)=(z,g.0).
\end{equation}

On a vu que les cycles contenus dans les sous-espaces transverses, image de $\varphi_j^g$, amènent des annulations. Nous allons donc quotienter par ces cycles.
Ce sont ceux de la forme:
$$\varphi_j^g(c),\text{ pour }c\in\Z[P_1^{m+1}]^0,j=0,1,2\text{ et }g\in\Gamma.$$
Posons $\Z[P_2^{m+1}]_j^0=\sum_{g\in\Gamma}\varphi_j^g(\Z[P_1^{m+1}]^0)\subset\Z[P_2^{m+1}]^0$.\par
On définit alors le groupe quotient des $m$-chaînes formelles:
\begin{equation}
H_m(P_2)=\Z[P_2^{m+1}]^0/\left(\sum_{j=0}^2 \Z[P_2^{m+1}]^0_j\right).
\end{equation}

\begin{prop}
L'action de $\Gamma^2$ sur $P_2$ se transmet diagonalement aux groupes $\Z[P_2^{m+1}]$. Cette action passe alors au quotient sur $H_m(P_2)$.\par
\end{prop}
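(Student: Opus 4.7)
The plan is to verify the two compatibility statements separately: first that the diagonal $\Gamma^2$-action on $P_2^{m+1}$ preserves the submodule $\Z[P_2^{m+1}]^0$ of closed chains, and second that it preserves each of the three transverse submodules $\Z[P_2^{m+1}]^0_j$ for $j=0,1,2$. Together these give the well-definedness of the induced action on the quotient $H_m(P_2)$.

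First, I would note that the diagonal action of $(\gamma_1,\gamma_2)\in\Gamma^2$ on $P_2=\pte^2$ extends $\Z$-linearly to the free module $\Z[P_2^{m+1}]$ by acting simultaneously on each of the $m+1$ factors of a generator $[(p_1,q_1),\ldots,(p_{m+1},q_{m+1})]$. A direct inspection shows this action commutes with the face maps $\delta_m$, since $\delta_m$ only erases one of the $m+1$ coordinate pairs: applying $(\gamma_1,\gamma_2)$ before or after deletion yields the same element. In particular $\Z[P_2^{m+1}]^0=\Ker(\delta_m)$ is $\Gamma^2$-stable.

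For the second point, it suffices to show that for each $j$, the generating set $\{\varphi_j^g(c)\mid g\in\Gamma,\,c\in\Z[P_1^{m+1}]^0\}$ is globally stable under $(\gamma_1,\gamma_2)\in\Gamma^2$; stability of $\Z[P_1^{m+1}]^0$ under the $\Gamma$-action on $P_1=\pte$ being clear since $\Gamma$ preserves the augmentation map. A short calculation on a tuple $(p_1,\ldots,p_{m+1})$ representing $c$ gives, in parallel with the formulas (\ref{acth})--(31) already established for $H_g$, $V_g$, $D_g$:
\begin{align*}
(\gamma_1,\gamma_2).\varphi_0^g(c)&=\varphi_0^{\gamma_1 g}(\gamma_2.c),\\
(\gamma_1,\gamma_2).\varphi_1^g(c)&=\varphi_1^{\gamma_2 g\gamma_1^{-1}}(\gamma_1.c),\\
(\gamma_1,\gamma_2).\varphi_2^g(c)&=\varphi_2^{\gamma_2 g}(\gamma_1.c).
\end{align*}
Each right-hand side lies in the appropriate $\Z[P_2^{m+1}]^0_j$, which proves the stability.

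Combining the two points, both the ambient module $\Z[P_2^{m+1}]^0$ and the submodule $\sum_j\Z[P_2^{m+1}]^0_j$ are $\Gamma^2$-stable, so the action passes to the quotient $H_m(P_2)$. I do not expect any real obstacle here: the statement is essentially a bookkeeping verification that the three transverse loci on $\H^2$ (identified with the images of $\varphi_0,\varphi_1,\varphi_2$) are permuted among themselves by $\Gamma^2$, which was already implicit in the decompositions (\ref{sumh})--(\ref{sumd}) used earlier for the continuous transverse chains.
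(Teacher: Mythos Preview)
Your proof is correct and follows essentially the same approach as the paper: verify that the diagonal $\Gamma^2$-action commutes with $\delta_m$ (hence stabilises the kernel $\Z[P_2^{m+1}]^0$), and then check that each transverse submodule $\Z[P_2^{m+1}]^0_j$ is sent to itself by writing down explicit formulas for the action on generators $\varphi_j^g(c)$. The only cosmetic difference is that the paper records the relation in the form $\varphi_j^g(\gamma z)=(\text{element of }\Gamma^2).\varphi_j^{Id}(z)$, whereas you compute $(\gamma_1,\gamma_2).\varphi_j^g(c)$ directly; your formulation is arguably the more direct of the two.
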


\begin{proof}
Pour définir l'action de $\Gamma^2$ sur $H_m(P_2)$, il faut vérifier que l'action sur $\Z[P_2^{m+1}]$ fixe les espaces $\Z[P_2^{m+1}]^0$ et $\Z[P_2^{m+1}]^0_j$ pour tout $j$.
Or on peut mettre en évidence les relations:
$$\delta_{m}\left[(\gamma_1,\gamma_2).(p_1,q_1),...,(\gamma_1,\gamma_2)(p_{m+1},q_{m+1})\right]=(\gamma_1,\gamma_2)\delta_{m}\left[(p_1,q_1),...,(p_{m+1},q_{m+1})\right],$$
$$\text{ et }\varphi_j^g(\gamma z)=
\begin{cases}
(g \infty,\gamma z)=(g,\gamma).\varphi_0^{Id}(z)&\text{ si }j=0,\\
(\gamma z,g \gamma z)=(\gamma,g\gamma).\varphi_1^{Id}(z)&\text{ si }j=1,\\
(\gamma z,g 0)=(\gamma,g).\varphi_2^{Id}(z)&\text{ si }j=2,
\end{cases}$$
qui démontrent la stabilité des sous-groupes.
\end{proof}

\subsubsection{Le triangle fondamental}

On construit un élément $T_2$ de $\Z[P^3_2]$ associé à $\tau_2$ et donné par:
\begin{align}
T_2&=[(\infty,\infty),(0,\infty),(0,0)],\\
\text{et donc : }\delta_2 T_2&=[(0,\infty),(0,0)]-[(\infty,\infty),(0,0)]+[(\infty,\infty),(0,\infty)].\nonumber
\end{align}
On notera $\partial T_2$ la classe de $\delta_2 T_2$ dans $H_1(P_2)$.

Définissons le morphisme de $\Z[\Gamma^2]$-modules:
$$\Theta_2:\Z[\Gamma^2]\to H_1(P_2),\quad(\gamma_1,\gamma_2)\mapsto (\gamma_1,\gamma_2).\partial T_2.$$

\begin{prop}\label{proptheta2}
Le noyau de $\Theta_2$ est l'idéal à gauche $\mathcal{J}_2$.\\
Son image, que nous noterons $H_1(P_2)^0$, est l'ensemble des classes des chaînes engendrées par celles de la forme:
$$\delta_2[(a,g.a),(a,g.b),(b,g.b)]\text{ où }a,b\in\pte\text{ et }g\in\Gamma.$$
\end{prop}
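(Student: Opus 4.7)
The plan is to translate the algebraic construction of $H_1(P_2)$ back to the geometric picture of $M_m^{pte}(\H^2,\Z)$ developed earlier. Because $\H^2$ is contractible and vertices remain constant under any cusp-fixing homotopy, the vertex-tuple map $[C]\mapsto[C(e_0^m),\ldots,C(e_m^m)]$ is a $\Z[\Gamma^2]$-equivariant isomorphism $M_m^{pte}(\H^2,\Z)\cong\Z[P_2^{m+1}]$ intertwining the boundary operators, and it sends $\tau_2$ to $T_2$. Exploiting the transitivity of $\Gamma$ on $\pte$, I check that the closed transverse subgroups correspond as $V^0\leftrightarrow\Z[P_2^2]^0_0$, $D^0\leftrightarrow\Z[P_2^2]^0_1$, $H^0\leftrightarrow\Z[P_2^2]^0_2$; hence $H^0+V^0+D^0$ corresponds to $\sum_{j=0}^2\Z[P_2^2]^0_j$.

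The kernel statement then follows immediately: $g\in\ker\Theta_2$ iff $g.\delta_2 T_2\in\sum_j\Z[P_2^2]^0_j$, iff $g.\delta_2\tau_2\in H^0+V^0+D^0$, which is the defining condition (\ref{defI2cycle}) of $\mathcal{J}_2$.

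For the image, direct computation gives $(\gamma_1,\gamma_2).T_2=[(\gamma_1\infty,\gamma_2\infty),(\gamma_1\cdot 0,\gamma_2\infty),(\gamma_1\cdot 0,\gamma_2\cdot 0)]$; setting $a=\gamma_1\infty$, $b=\gamma_1\cdot 0$, $g=\gamma_2\gamma_1^{-1}$, this becomes the triangle $[(a,g.a),(b,g.a),(b,g.b)]$, so the image of $\Theta_2$ is spanned by the classes of $\delta_2$'s of such triangles. To recover the form stated in the proposition, I would perform a rectangle trick: the two triangles $[(a,g.a),(b,g.a),(b,g.b)]$ and $[(a,g.a),(a,g.b),(b,g.b)]$ triangulate the rectangle with corners $\{a,b\}\times\{g.a,g.b\}$ in the two possible ways and share the diagonal edge $\varphi_1^g([a,b])$; the difference of their boundaries is the oriented perimeter of this rectangle. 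The hard part will be exhibiting this perimeter as a sum of $\varphi_j^g$-images of closed formal chains in $\Z[P_1^2]^0$, which I intend to accomplish by bundling the pairs of $V$- and $H$-type opposite sides together with the shared diagonal so that the resulting combinations become closed in each coordinate. Conversely, for $(a,b)$ in the $\Gamma$-orbit of $(\infty,0)$ one picks $\gamma_1$ with $\gamma_1\infty=a$, $\gamma_1\cdot 0=b$ and sets $\gamma_2=g\gamma_1$; a general pair $(a,b)$ is then reached by telescoping along a Farey path connecting $a$ to $b$ through successive Farey neighbors, each step producing a generator in the image.
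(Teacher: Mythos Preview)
Your kernel argument is correct and coincides with the paper's: the vertex-tuple isomorphism transports the defining condition (\ref{defI2cycle}) of $\mathcal{J}_2$ to $g.\delta_2 T_2\in\sum_j\Z[P_2^2]^0_j$, which the paper phrases via $\mathcal{J}_2=I_H\cap I_V\cap I_D$ and a side-by-side check. Your Farey telescoping for the inclusion of the stated generators in the image is also the paper's route; it carries this out by stripping off $(1,g)$ and then, for two consecutive Farey steps $\gamma_1,\gamma_2$, exhibiting the four-triangle decomposition $[(\gamma_1,\gamma_1)+(\gamma_2,\gamma_2)+(\gamma_2,\gamma_1)+(\gamma_2S,\gamma_1S)].\partial T_2$.

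The rectangle trick, however, does not work. The difference of the boundaries of your two triangulations is
\[
[(b,g.a),(b,g.b)] - [(a,g.b),(b,g.b)] + [(a,g.a),(b,g.a)] - [(a,g.a),(a,g.b)],
\]
and for $a\neq b$ this is \emph{not} in $\sum_j\Z[P_2^2]^0_j$: by the direct-sum splittings analogous to (\ref{sumh})--(\ref{sumd}) the two first-coordinate-constant terms land in distinct summands (indexed by $a$ and by $b$), and each summand receives only the open chain $\pm[g.a,g.b]$. The shared diagonal has already cancelled and cannot be reintroduced, since the $H,V,D$ decomposition of a transverse $1$-chain is unique. Hence the two triangulations of the rectangle give \emph{different} classes in $H_1(P_2)$. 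The correct substitute is a reversal: your triangle $[(a,g.a),(b,g.a),(b,g.b)]$ and the stated generator $[(b,g.b),(b,g.a),(a,g.a)]$ (the stated form with $a\leftrightarrow b$) have $\delta_2$'s whose \emph{sum} consists of three pairs of the shape $[x,y]+[y,x]$, each of which is closed and transverse (lying in $\Z[P_2^2]^0_0$, $\Z[P_2^2]^0_1$, $\Z[P_2^2]^0_2$ respectively). Thus in $H_1(P_2)$ your triangle equals the negative of a stated generator, and the image is contained in their span.
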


\begin{proof}
Pour calculer l'image de l'application, on choisit un élément qui admet pour représentant:
$$\delta_2[(a,g.a),(a,g.b),(b,g.b)]\text{ avec }a,b\in\pte,g\in\Gamma.$$
Cet élément est l'image de la multiplication par $(1,g)$ de:
$$\delta_2[(a,a),(a,b),(b,b)]$$
Puis on écrit comme dans le cas classique une suite finie de matrice $\gamma_i\in\Gamma$ telle que:
$$b=\gamma_1.\infty\to\gamma_1.0=\gamma_2.\infty\to...\to\gamma_{N-1}.0=\gamma_N.\infty\to\gamma_N.0=a.$$
Ceci se fait de manière constructive grâce aux fractions continues, voir Merel \cite{Merel1991}.\par
Il nous suffit alors de montrer le résultat de décomposition suivante pour $N=2$:
$$\delta_2[(\gamma_2.0,\gamma_2.0),(\gamma_1.\infty,\gamma_2.0),(\gamma_1.\infty,\gamma_1.\infty)]=[(\gamma_1,\gamma_1)+(\gamma_2,\gamma_2)+(\gamma_2,\gamma_1)+(\gamma_2 S,\gamma_1 S)].\partial T_2$$
En effet, il suffit de simplifier le terme de droite qui est la classe dans $H_1(P_2)$ du bord d'un triangle qui se décompose en quatre triangles:

\begin{tabular}{ll}
& \multirow{9}*{
\setlength{\unitlength}{0.5cm}
\begin{picture}(9,7)
\put(1,1){\line(1,0){6}}
\put(1,1){\line(1,1){6}}
\put(4,1){\line(0,1){3}}
\put(4,1){\line(1,1){3}}
\put(7,1){\line(0,1){6}}
\put(4,4){\line(1,0){3}}

\put(0.4,0){$\gamma_1i\infty$}
\put(2.7,0){$\gamma_1 0=\gamma_2\infty$}
\put(6.7,0){$\gamma_2 0$}

\put(8,1){$\gamma_1 \infty$}
\put(7.5,4){$\gamma_1 0=\gamma_2 \infty$}
\put(8,7){$\gamma_2 0$}

\end{picture}}\\
$\delta_2[(\gamma_1.\infty,\gamma_1.\infty),(\gamma_1.0,\gamma_1.\infty),(\gamma_1.0,\gamma_1.0)]$&\\
&\\
$+\delta_2[(\gamma_2.\infty,\gamma_2.\infty),(\gamma_2.0,\gamma_2.\infty),(\gamma_2.0,\gamma_2.0)]$& \\
&\\
$+\delta_2[(\gamma_2.\infty,\gamma_1.\infty),(\gamma_2.0,\gamma_1.\infty),(\gamma_2.0,\gamma_1.0)]$& \\
&\\
$+\delta_2[(\gamma_2S.\infty,\gamma_1S.\infty),(\gamma_2S.0,\gamma_1S.\infty),(\gamma_2S.0,\gamma_1S.0)]$.& \\
&\\
\end{tabular}

Ce résultat peut s'itérer sur la famille des $\gamma_1,...,\gamma_N$ et on obtient bien alors un élément de $\Z[\Gamma^2]\partial T_2$ l'image de $\Theta_2$.\par

Pour démontrer que le noyau de $\Theta_2$ est $\mathcal{J}_2$, on utilise l'expression de la Proposition \ref{inclhvd}.
On vérifie que chacun de ces idéaux $I_H, I_D$ et $I_V.$ annulent tour à tour les côtés de :
$$\delta_2 T_2=[(\infty,\infty),(0,\infty)]-[(\infty,\infty),(0,0)]+[(0,\infty),(0,0)],$$
modulo $\sum_{j=0}^2 \Z[P_2^2]^0_j$.
\end{proof}

L'application $\Theta_2$ n'est pas surjective et on peut déterminer plus précisément $H_1(P_2)^0$:
\begin{prop}\label{propimgHP}
Le sous-groupe $H_1(P_2)^0$ est celui des $1$-chaînes transverses:
\begin{align}
H_1(P_2)^0&=\left[\left(\sum_{j,g} \varphi_j^g(\Z[P_1^2])\right)\cap \Z[P_2^2]^0\right]/\sum_{j,g} \varphi_j^g(\Z[P_1^2]^0)\\
&\cong (H+D+V)^0/(H^0+D^0+V^0).\nonumber 
\end{align}
\end{prop}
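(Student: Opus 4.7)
The forward inclusion is essentially Proposition \ref{proptheta2}. Each generator of $H_1(P_2)^0$ is the class of
\begin{equation*}
\delta_2[(a, g.a), (a, g.b), (b, g.b)] = [(a, g.b), (b, g.b)] - [(a, g.a), (b, g.b)] + [(a, g.a), (a, g.b)],
\end{equation*}
which is visibly a sum of one horizontal, one (signed) diagonal, and one vertical $1$-simplex in $\Z[P_2^2]$; it lies in $(H + V + D) \cap \Z[P_2^2]^0$ since $\delta_1 \circ \delta_2 = 0$, and its class therefore has a representative in the right-hand side of the claimed description.

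For the reverse inclusion, take $C \in (H + V + D) \cap \Z[P_2^2]^0$. The pairwise disjointness of $H$, $V$, $D$ (already used in the proof of Proposition \ref{inclhvd}) gives a unique decomposition $C = h + v + d$, and the plan is a two-stage reduction. First, for every elementary diagonal simplex $[(a, g.a), (b, g.b)]$ appearing in $d$, the identity
\begin{equation*}
[(a, g.a), (b, g.b)] = [(a, g.a), (a, g.b)] + [(a, g.b), (b, g.b)] - \delta_2[(a, g.a), (a, g.b), (b, g.b)]
\end{equation*}
trades the diagonal for a vertical plus a horizontal simplex, modulo an element of $H_1(P_2)^0$. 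Collecting all substitutions yields $C \equiv h' + v' \pmod{H_1(P_2)^0}$ with $h' + v'$ a closed chain of mixed horizontal-vertical type.

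The second stage shows that any such closed $h' + v'$ lies in $H^0 + V^0 + H_1(P_2)^0$. The argument mirrors the Farey-chain construction from the proof of Proposition \ref{proptheta2}: at each vertex where the contributions of $\delta_1 h'$ and $\delta_1 v'$ cancel, I pick a finite sequence $\gamma_1, \ldots, \gamma_N \in \Gamma$ with $\gamma_i 0 = \gamma_{i+1} \infty$ linking the coordinates in $\pte$, and assemble the corresponding sum of transverse triangle boundaries; the horizontal and vertical edges of these triangles match $h' + v'$ up to $H^0 + V^0$, while the residual diagonal edges are reabsorbed by a second pass of the first-stage identity. The combinatorial complexity of the mismatch strictly decreases at each iteration, so the procedure terminates and gives $\bar{C} \in H_1(P_2)^0$. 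The principal difficulty I foresee is precisely this quantitative control: ensuring that the Farey-chain decomposition terminates in a uniform way and that the secondary diagonals introduced along the way can be fully cancelled within $H_1(P_2)^0 + D^0$. This rests, exactly as in the proof of Proposition \ref{proptheta2}, on the bounded-length Farey algorithm and the transitivity of $\Gamma$ on $\pte$.
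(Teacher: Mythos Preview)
Your forward inclusion and first-stage reduction are both correct and match the paper. The divergence is in the second stage, and there the proposal has a genuine gap.

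The concern is circularity. In stage~2 you subtract a sum of generating triangles $\delta_2[(a,g.a),(a,g.b),(b,g.b)]$ chosen so that their $H$- and $V$-edges absorb $h'+v'$ up to $H^0+V^0$; the residual is then exactly the sum of the diagonal edges $-[(a,g.a),(b,g.b)]$ of those same triangles. But your stage-1 identity applied to such a diagonal edge is precisely
\[
[(a,g.a),(b,g.b)] = [(a,g.a),(a,g.b)] + [(a,g.b),(b,g.b)] - \delta_2[(a,g.a),(a,g.b),(b,g.b)],
\]
which just undoes the triangle you subtracted. So the two stages can cancel one another, and your assertion that ``the combinatorial complexity of the mismatch strictly decreases'' has no supporting invariant: you have not named the quantity that drops. (There is a way out --- if the $H$- and $V$-parts are matched \emph{exactly} modulo $H^0+V^0$, then the residual diagonal part is automatically in $D^0$, hence zero in $H_1(P_2)$ --- but arranging this simultaneous matching is the real work, and your sketch does not do it.)

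The paper avoids this loop entirely by a different mechanism: it never separates out the diagonals. Instead it uses the relation $\delta_2\circ\delta_3=0$ to triangulate an \emph{arbitrary} class in $H_1(P_2)$ into boundaries of the normal form $\delta_2[(a,g.a),(a,g.b),(c,g.b)]$, by repeatedly inserting a fourth vertex and expanding. Only at the very end does transversality enter: since two of the three sides of each such triangle are already transverse, membership of the full chain in the transverse subspace forces the remaining side $[(a,g.a),(c,g.b)]$ to be transverse as well, hence $c\in\{a,b\}$, which lands each triangle in $H_1(P_2)^0$. This sidesteps any iteration between ``diagonal'' and ``horizontal/vertical'' reductions.
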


\begin{proof}
On commence par montrer que les générateurs obtenus dans la Proposition \ref{proptheta2} sont dans l'espace transverse. En effet, son bord :
$$\delta_2[(a,g.a),(a,g.b),(b,g.b)]=[(a,ga),(a,gb)]-[(a,ga),(b,gb)]+[(a,gb),(b,gb)]\in \Z[P_2^2]^0,$$
vérifie les appartenances :
\begin{align*}
[(a,ga),(a,gb)]&=\varphi_0^{\gamma_1}([ga,gb])\in\varphi_0^{\gamma_1}(\Z[P_1^2]),\\
[(a,ga),(b,gb)]&=\varphi_1^g([a,b])\in\varphi_1^g(\Z[P_1^2]),\\
\text{et }[(a,gb),(b,gb)]&=\varphi_2^{\gamma_2}([a,b])\in\varphi_2^{\gamma_2}(\Z[P_1^2]).
\end{align*}
On a introduit pour cela $\gamma_1,\gamma_2\in\Gamma$ tels que $\gamma_1\infty=a$ et $\gamma_2 0= gb$.\par

Pour démontrer l'inclusion réciproque, on donne un algorithme de décomposition de l'espace $H_1(P_2)$ en triangles simples.\par
On prend un représentant quelconque d'une classe de $H_1(P_2)$:
$$\delta_2[(a_1,a_2),(b_1,b_2),(c_1,c_2)]\in\Z[P_2^2]^0.$$
Il dépend de six pointes $a_1,a_2,b_1,b_2,c_1,c_2\in\pte$.

On va trianguler pour obtenir une combinaison linéaire d'éléments dépend de quatre pointes $\alpha_1,\alpha_2,\beta_1,\beta_2\in\pte$ et de la forme:
$$\delta_2[(\alpha_1,\alpha_2),(\alpha_1,\beta_2),(\beta_1,\beta_2)].$$

Pour cela, on utilise le fait que $\delta_2\circ\delta_3=0$ en ajoutant un nouveau couple de pointes.
Si on adjoint le couple $(a_1,b_2)$, on obtient:
\begin{multline*}
0=\delta_2\circ\delta_3[(a_1,a_2),(a_1,b_2),(b_1,b_2),(c_1,c_2)]\\
=\delta_2[(a_1,b_2),(b_1,b_2),(c_1,c_2)]-\delta_2[(a_1,a_2),(b_1,b_2),(c_1,c_2)]\\
+\delta_2[(a_1,a_2),(a_1,b_2),(c_1,c_2)]-\delta_2[(a_1,a_2),(a_1,b_2),(b_1,b_2)].
\end{multline*}
Ceci permet d'obtenir une réécriture de $\delta_2[(a_1,a_2),(b_1,b_2),(c_1,c_2)]$
$$=\delta_2[(a_1,b_2),(b_1,b_2),(c_1,c_2)]+\delta_2[(a_1,a_2),(a_1,b_2),(c_1,c_2)]-\delta_2[(a_1,a_2),(a_1,b_2),(b_1,b_2)].$$
Le troisième terme est de la forme voulue et les deux premiers ne dépendent plus que de cinq pointes.\par 
Ils se réduisent tous les deux de la même manière. Par exemple, on peut ajouter $(c_1,b_2)$, pour calculer $\delta_2[(a_1,b_2),(b_1,b_2),(c_1,c_2)]$ 
$$=\delta_2[(c_1,b_2),(b_1,b_2),(c_1,c_2)]+\delta_2[(a_1,b_2),(c_1,b_2),(c_1,c_2)]-\delta_2[(a_1,b_2),(c_1,b_2),(b_1,b_2)].$$
Le premier et deuxième termes sont de la forme attendue. Le dernier est en faite un triangle à bord transverse:
$$\delta_2[(a_1,b_2),(c_1,b_2),(b_1,b_2)]=\varphi_2^{\gamma_2}(\delta_2[a_1,c_1,b_1])\in\Z[P_2^2]^0_2,\text{ avec }\gamma_2 0=b_2$$
Sa classe est donc nulle dans $H_1(P_2)$.\par

Ainsi $H_1(P_2)$ est engendré par:
$$\delta_2[(\alpha_1,\alpha_2),(\alpha_1,\beta_2),(\beta_1,\beta_2)]
=\delta_2[(\alpha_1,g_2g_1^{-1}.\alpha_1),(\alpha_1,g_2g_1^{-1}.b),(g_2g_1^{-1}.c,g_2g_1^{-1}.b)],$$
où $\alpha_1=g_1.\infty$, $\alpha_2=g_2\infty$, $b=g_1g_2^{-1}\beta_2$ et $c=g_1g_2^{-1}\beta_1$.\par
Donc les cycles de $H_1(P_2)$ peuvent être décomposés comme combinaison linéaire de cycle de la forme:
$$\delta_2[(a,g.a),(a,g.b),(c,g.b)]\text{ où }a,b,c\in\pte\text{ et }g\in\Gamma.$$

On peut désormais compléter la démonstration de la Proposition \ref{propimgHP}. Supposons que la $1$-chaîne obtenue appartient à $H_1(P_2)^0$ alors 
$\delta_2[(a,ga),(a,gb),(c,gb)]$ a deux de ses côtés dans l'espace transverse imposant le troisième $[(a,ga),(c,gb)]\in\sum_{j,g}\varphi_j^g(\Z[P_1^2])$ c'est à dire $c\in\{a,b\}$. Ceci donne bien l'inclusion manquante.
\end{proof}

\begin{rem}
Les propositions \ref{proptheta2} et \ref{propimgHP} se résument dans la suite exacte décomposant $\Z[\Gamma^2]$ :
\begin{equation}\label{suiexact2}
0\to\mathcal{J}_2\to\Z[\Gamma^2]\stackrel{\Theta_2}{\to}H_1(P_2)^0\to 0.
\end{equation}
\end{rem}

\subsection{Les relations de Manin d'ordre $2$}

\subsubsection{$\mathcal{J}_2$ est de type fini}

Nous montrons que l'idéal $\mathcal{J}_2$ est engendré par des combinaisons linéaires à support dans l'ensemble fini:
$$(U^{i_1},U^{i_2})(S,S)^{\delta}\text{ pour }(i_1,i_2)\in (\Z/3\Z)^2\text{ et }\delta\in \Z/2\Z.$$
Pour cela, nous allons introduire une hauteur sur $H_1(P_2)^0$ associée à une distance de $\pte$ invariante par $\Gamma$.\par

Définissons l'application $d:\pte\times\pte\to \Z_{\geq 0}$ en posant pour $a\neq b\in\pte$:
\begin{multline}
d(a,b)=\min\{N\in\mathbb{N}\text{ tel qu'il existe }\gamma_1,...,\gamma_N\in\Gamma\\
\text{ vérifiant }a=\gamma_1 \infty,\gamma_1 0=\gamma_2 \infty,...,\gamma_N 0=b\},
\end{multline}
et $d(a,a)=0$.

\begin{prop}\label{propdist}
L'application $d$ est une distance invariante par $\Gamma$, au sens où pour tout $a,b,c\in\pte$ et $\gamma\in \Gamma$:
\begin{align*}
d(a,b)=d(b,a),\quad &d(a,b)=0 \Leftrightarrow a=b,\\ 
d(a,c)\leq d(a,b)+d(b,c)\quad\text{ et }\quad &d(\gamma a,\gamma b)=d(a,b).
\end{align*}
\end{prop}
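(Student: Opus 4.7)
Le plan consiste à vérifier successivement les quatre propriétés énoncées, la principale étant de s'assurer au préalable que $d$ est bien à valeurs dans $\Z_{\geq 0}$. La finitude de $d(a,b)$ pour tout couple $(a,b)\in\pte\times\pte$ résulte de la transitivité de l'action de $\Gamma$ sur $\pte$ combinée à l'algorithme des fractions continues déjà invoqué dans la preuve de la Proposition \ref{proptheta2} à travers la référence \cite{Merel1991}: pour tout $a,b\in\pte$ distincts, on peut construire explicitement une suite finie $(\gamma_1,...,\gamma_N)$ satisfaisant $a=\gamma_1\infty$, $\gamma_i 0=\gamma_{i+1}\infty$ et $\gamma_N 0=b$.

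L'inégalité triangulaire $d(a,c)\leq d(a,b)+d(b,c)$ s'obtient par concaténation : si $(\gamma_1,...,\gamma_M)$ réalise $d(a,b)$ et $(\gamma'_1,...,\gamma'_{M'})$ réalise $d(b,c)$, alors la juxtaposition $(\gamma_1,...,\gamma_M,\gamma'_1,...,\gamma'_{M'})$ fournit un chemin valide de $a$ à $c$ de longueur $M+M'$. La propriété de séparation se règle immédiatement : on a $d(a,a)=0$ par définition et réciproquement toute suite vérifiant la condition pour $a\neq b$ contient au moins une matrice, ce qui donne $d(a,b)\geq 1$.

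L'invariance $d(\gamma a,\gamma b)=d(a,b)$ se déduit du fait que la multiplication à gauche par $\gamma$ appliquée à chaque terme d'une suite minimale $(\gamma_i)$ pour $d(a,b)$ produit une suite $(\gamma\gamma_i)$ qui relie $\gamma a$ à $\gamma b$ et qui vérifie encore les conditions de compatibilité aux pointes, donnant $d(\gamma a,\gamma b)\leq d(a,b)$. L'inégalité réciproque s'obtient en appliquant le même argument à $\gamma^{-1}$.

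Le point le moins formel est la symétrie $d(a,b)=d(b,a)$, que je propose d'établir en inversant le chemin grâce à la matrice $S$, qui échange les pointes $\infty$ et $0$. Précisément, si $(\gamma_1,...,\gamma_N)$ réalise un chemin de $a$ vers $b$, je pose $\gamma'_j=\gamma_{N-j+1}S$ pour $j=1,...,N$ ; alors $\gamma'_j\infty=\gamma_{N-j+1}\cdot 0$ et $\gamma'_j 0=\gamma_{N-j+1}\cdot\infty$, donc la condition de recollement $\gamma'_j 0=\gamma'_{j+1}\infty$ devient $\gamma_{N-j+1}\infty=\gamma_{N-j}0$, qui est exactement la condition satisfaite par la suite initiale. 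On obtient ainsi $d(b,a)\leq d(a,b)$, et la symétrie par échange des rôles de $a$ et $b$. Aucune étape ne présente d'obstacle conceptuel sérieux ; la vérification tient essentiellement dans la bonne gestion de ces manipulations combinatoires sur les suites de matrices.
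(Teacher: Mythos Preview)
Your proof is correct and follows essentially the same approach as the paper: finiteness via continued fractions, concatenation for the triangle inequality, left translation by $\gamma$ and $\gamma^{-1}$ for $\Gamma$-invariance, and reversal of the chain via right multiplication by $S$ for symmetry. The only differences are cosmetic (order of the arguments and your explicit indexing $\gamma'_j=\gamma_{N-j+1}S$, which the paper leaves implicit).
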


\begin{rem}
1) Pour la suite, on appellera \textit{chaîne} une telle suite de matrices et on notera:
\begin{equation}
a=\gamma_1 \infty \stackrel{\gamma_1}{\to}\gamma_1 0\stackrel{\gamma_2}{\to}...\gamma_N \infty\stackrel{\gamma_N}{\to} \gamma_N 0=b.
\end{equation}
On appellera \textit{longueur} de la chaîne le nombre $N$ de matrices. Les chaînes de longueurs minimales reliant deux points $a$ et $b$ déterminent ainsi la distance $d(a,b)$.\par
2) L'application $d$ est la distance associée au graphe orienté $\mathcal{G}_1$ dont l'ensemble des sommets est $\pte$ et l'ensemble des arrêtes est $PSL_2(\Z)$. En effet, pour tout couple $\big((p_1:q_1),(p_2:q_2)\big)\in\pte^2$, on peut imposer $p_1,p_2,q_1,q_2\in\Z$ et quitte à changer $(p_1:q_1)=(-p_1:-q_1)$, on a:
\begin{equation}
d\big((p_1:q_1),(p_2:q_2)\big)=1 \Leftrightarrow \mat{p_1}{p_2}{q_1}{q_2}\in PSL_2(\Z)\Leftrightarrow \mat{p_2}{-p_1}{q_2}{-q_1}\in PSL_2(\Z).
\end{equation}
Ainsi toute paire de sommets de ce type est reliée par les matrices $\mat{p_1}{p_2}{q_1}{q_2}$ et $\mat{p_2}{-p_1}{q_2}{-q_1}$. Nous représentons ceci sur le schéma suivant où chaque trait représente ainsi deux arrêtes orientées aller et retour.

\vspace{0.5cm}
\hbox{
\hspace*{-.1cm}

\setlength{\unitlength}{0.7cm}
\begin{picture}(24,7)

\put(12.1,4.5){$id$}
\put(10,4.2){\line(1,0){4}}
\put(12.3,4.2){$_>$}

\put(11.9,3.2){$S$}
\put(10,4){\line(1,0){4}}
\put(11.7,4){$_<$}

\put(8.7,4){${1/0}$}
\put(14.5,4){${0/1}$}
\put(11.2,6.6){${-1/1}$}
\put(11.5,1.3){${1/1}$}
\put(9.5,4.7){\line(1,1){1.8}}
\put(14.5,4.7){\line(-1,1){1.8}}
\put(9.5,3.5){\line(1,-1){1.8}}
\put(14.5,3.5){\line(-1,-1){1.8}}

\put(15.5,3.5){\line(1,-2){0.8}}
\put(8.5,3.5){\line(-1,-2){0.8}}
\put(7.2,1.3){$2/1$}
\put(16.1,1.3){$1/2$}
\put(12.7,1.5){\line(1,0){3}}
\put(8.3,1.5){\line(1,0){2.9}}

\put(7.1,6.6){$-2/1$}
\put(15.8,6.6){$-1/2$}
\put(15.5,4.6){\line(1,2){0.8}}
\put(8.5,4.6){\line(-1,2){0.8}}
\put(12.9,6.8){\line(1,0){2.2}}
\put(9,6.8){\line(1,0){2}}

\put(14,0.2){$2/3$}
\put(13.5,0.7){\line(-2,1){0.7}}
\put(15.2,0.7){\line(2,1){0.7}}
\put(9.3,0.2){$3/2$}
\put(8.8,0.7){\line(-2,1){0.7}}
\put(10.5,0.7){\line(2,1){0.7}}

\put(17,2.8){$1/3$}
\put(17.3,2.5){\line(-1,-1){0.6}}
\put(16.8,3.3){\line(-2,1){0.8}}
\put(6,2.8){$3/1$}
\put(6.6,2.5){\line(1,-1){0.6}}
\put(6.8,3.4){\line(2,1){0.9}}

\put(17.2,4){$1/n$}
\put(17,4.2){\line(-1,0){0.8}}

\end{picture}
}

3) La valence (i.e. le nombre de voisins) de chaque sommet est infinie. Les propriétés de distance données par la Proposition \ref{propdist} se traduisent sur le graphe par sa connexité et son invariance sous l'action de $\Gamma$.
\end{rem}

\begin{proof}
Tout d'abord cette application est bien définie car le développement en fraction continue donne l'existence d'une chaîne de longueur finie, obtenue par l'astuce de Manin \cite{Ma2}, liant $0$ à toute pointe de $\pte$.\\
La condition de séparation est purement formelle, les chaînes de longueur $0$ sont $a=b$.\\
Soient $a,b\in\pte$. Si $d(a,b)=N$ alors il existe une chaîne minimale: $a\stackrel{\gamma_1}{\to}...\stackrel{\gamma_N}{\to}b$. Elle nous permet de construire la chaîne:
$b=\gamma_N S \infty\stackrel{\gamma_N S}{\to}...\stackrel{\gamma_1 S}{\to} \gamma_1 S 0=a.$\\
Ceci démontre que $d(b,a)\leq d(a,b)$ et donc on obtient l'égalité en symétrisant.\\
Soient $a,b,c\in\pte$. On peut concaténer deux chaînes: 
$a\stackrel{\gamma_1}{\to}...\stackrel{\gamma_{d(a,b)}}{\to}b\stackrel{\gamma'_{1}}{\to}...\stackrel{\gamma'_{d(b,c)}}{\to}c.$\\
On obtient une chaîne de longueur $d(a,b)+d(b,c)$ et ainsi $d(a,c)\leq d(a,b)+d(b,c)$.\\
Soit $g\in\Gamma$. Une chaîne de longueur minimale: $a\stackrel{\gamma_1}{\to}...\stackrel{\gamma_{d(a,b)}}{\to}b$ se translate en:
$g a\stackrel{g \gamma_1}{\to}...\stackrel{g \gamma_{d(a,b)}}{\to}g b.$
Ceci donne $d(g a,g b)\leq d(a ,b)$. Puis on applique ceci à $g^{-1}\in\Gamma$ et $ga,gb\in\pte$ pour obtenir l'inégalité manquante.
\end{proof}

Ceci permet de décomposer l'espace image de $\Theta_1:\Z[\Gamma]\to \Z[\pte]^0$, définie en (\ref{Theta1}), suivant une hauteur:
\begin{equation}
h:\pte\to\Z_{\geq 0}, a\mapsto \max(d(a,\infty),d(a,0)),
\end{equation}
en posant, pour tout entier $M>0$:
\begin{equation}
\Z[\pte]^0_M=\left\{\sum_a \lambda_a [a]\in\Z[\pte]^0\text{ tel que }\lambda_a\neq 0\Rightarrow h(a)\leq M\right\}.
\end{equation}

Définissons désormais les parties de $\pte$:
\begin{align*}
B_0&=\{\infty,0,1,-1\}=\{a\in\pte\text{ tel que }h(a)\leq 1\},\\
B_1&=\{(p:q)\in\pte\text{ tel que }0<p<q\},\\
B_2&=\{(p:q)\in\pte\text{ tel que }0<q<p\},\\
B_3&=\{(p:q)\in\pte\text{ tel que }0<-p<q\},\\
\text{et }B_4&=\{(p:q)\in\pte\text{ tel que }0<q<-p\}.
\end{align*}

Ceci nous permet d'obtenir la partition suivante de $\pte$:
\begin{equation}\label{partpte}
\pte=B_0\sqcup B_1\sqcup B_2\sqcup B_3\sqcup B_4.
\end{equation}

De plus, les actions de $S=\mat{0}{-1}{1}{0}$ et $\varepsilon=\mat{-1}{0}{0}{1}$ stabilisent $B_0$ et échangent les espaces $B_1,B_2,B_3$ et $B_4$ selon:
$$B_1=\varepsilon SB_2=\varepsilon B_3=SB_4.$$

Pour tout $j\in\{1,2,3,4\}$, l'ensemble des voisins des sommets de $B_j$ sur le graphe $\mathcal{G}_1$ est $B_j\cup B_0$. On peut ainsi réduire par symétrie l'étude du graphe à l'étude de $B_1\cup B_0$.

\begin{lem}\label{lemtri}
Soit $(p_1:q_1),(p_2:q_2)\in B_1$ vérifiant $d\left(\frac{p_1}{q_1},\frac{p_2}{q_2}\right)=1$. Alors il existe exactement deux pointes à distance $1$ de $\frac{p_1}{q_1}$ et de $\frac{p_2}{q_2}$ qui sont:
$$\frac{p_1-p_2}{q_1-q_2}\quad\text{ et }\quad\frac{p_1+p_2}{q_1+q_2}.$$
De plus, leurs hauteurs sont données par:
$$h\left(\frac{p_1-p_2}{q_1-q_2}\right)\leq \min\left[h\left(\frac{p_1}{q_1}\right),h\left(\frac{p_2}{q_2}\right)\right]
\text{ et }h\left(\frac{p_1+p_2}{q_1+q_2}\right)= \min\left[h\left(\frac{p_1}{q_1}\right),h\left(\frac{p_2}{q_2}\right)\right]+1.$$
\end{lem}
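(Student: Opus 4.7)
The plan has two parts: an elementary algebraic characterization of the two common neighbors, and a metric analysis of their heights in the Farey graph $\mathcal{G}_1$.

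For the algebraic part, the hypothesis $d(p_1/q_1,p_2/q_2)=1$ is equivalent to $p_1 q_2-p_2 q_1=\pm 1$, so $(p_1,q_1),(p_2,q_2)$ form a $\Z$-basis of $\Z^2$. Writing any primitive candidate common neighbor as $(p,q)=\alpha(p_1,q_1)+\beta(p_2,q_2)$ with $\alpha,\beta\in\Z$, the two conditions $p_i q-p q_i=\pm 1$ become $\beta=\pm 1$ and $\alpha=\pm 1$; modulo the projective identification $(p:q)=(-p:-q)$ one obtains exactly the two distinct classes $(p_1\pm p_2:q_1\pm q_2)$, distinctness following from the non-vanishing of the determinant.

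The upper bound $h(a_+)\leq \min(h_1,h_2)+1$ is immediate from the triangle inequality for $d$: since $a_+$ is a neighbor of each $a_i$, $d(a_+,x)\leq 1+d(a_i,x)$ for $x\in\{0,\infty\}$, and choosing the index minimizing $h_i$ gives the bound. The matching lower bound together with $h(a_-)\leq\min(h_1,h_2)$ I would establish by induction on $q_1+q_2$, using the Stern--Brocot structure of $\mathcal{G}_1$. After possibly swapping labels, assume $q_1>q_2$; a short calculation (using that $p_i/q_i\in(0,1)$ and $p_1 q_2-p_2 q_1=\pm 1$) shows $p_1\geq p_2$, with equality exactly in the base cases where $a_-\in B_0$. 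Otherwise $(a_-,a_2)$ is again a Farey pair contained in $B_1\cup B_0$, with strictly smaller denominator sum $q_-+q_2=q_1<q_1+q_2$, and with $a_1=\mathrm{med}(a_-,a_2)$; the inductive hypothesis applied to this smaller pair yields the mediant recurrence $h(a_1)=\min(h(a_-),h(a_2))+1$, which combined with the triangle-inequality upper bound for $a_+$ pins down both claimed inequalities.

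The main obstacle is the lower bound $h(a_+)>\min(h_1,h_2)$: the triangle inequality gives only upper bounds, so one must exclude any chain from $a_+$ to $\{0,\infty\}$ that bypasses both Farey parents. A convenient device is the matrix $M=\mat{p_1}{p_2}{q_1}{q_2}$ of determinant $\pm 1$: it sends $(\infty,0,1,-1)$ to $(a_1,a_2,a_+,a_-)$, and it acts as an isometry of $(\pte,d)$ because the $\pm 1$-determinant condition defining the edges of $\mathcal{G}_1$ is preserved by left multiplication by such an $M$. This transports all the height comparisons for the quadruple $(a_1,a_2,a_+,a_-)$ to corresponding distance comparisons for the canonical quadruple $B_0=\{\infty,0,1,-1\}$ together with the two images $M^{-1}(\infty),M^{-1}(0)\in\pte$, where the required strict inequality can be extracted by descent on the continued-fraction expansion. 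The base cases of the induction, in which $a_-$ lies in $B_0$, correspond to $p_1=p_2$ (giving $a_-=0$) or $p_1-q_1=\pm(p_2-q_2)$ (giving $a_-=\pm 1$), and are disposed of by direct calculation using the explicit neighbor description of $B_0$ recalled just before the lemma.
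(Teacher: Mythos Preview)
Your identification of the two common neighbours via the $\Z$-basis $(p_1,q_1),(p_2,q_2)$ is the paper's argument in different clothing: the paper packages it as the isometry $\gamma=\mat{p_1}{p_2}{q_1}{q_2}\in\Gamma$ sending $(\infty,0,1,-1)$ to $(a_1,a_2,a_+,a_-)$.

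For the height estimates the paper does \emph{not} induct. It notes that $h|_{B_1}=d(\,\cdot\,,\infty)$, so after translation by $\gamma$ the four quantities $h(a_1),h(a_2),h(a_+),d(\infty,a_-)$ become the distances from the single point $\gamma^{-1}\infty=-q_2/q_1$ to $\infty,0,1,-1$ respectively. Assuming WLOG $-q_2/q_1\in B_3$, the paper asserts the chain $d(\cdot,-1)\le d(\cdot,0)\le d(\cdot,\infty)\le d(\cdot,1)$ and extracts the needed strict inequality by reusing the first half of the lemma. Your matrix $M$ is exactly this $\gamma$, and your closing paragraph is a sketch of precisely this argument; the Stern--Brocot induction on $q_1+q_2$ that precedes it is a detour you yourself abandon for the hard direction.

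There is a genuine obstruction that your induction exposes and that the paper glosses over: the first inequality as stated, with $\min$, is \emph{false}. Take $a_1=4/7$, $a_2=1/2$ in $B_1$ (Farey neighbours since $4\cdot 2-7\cdot 1=1$); their common neighbours are $a_-=3/5$ and $a_+=5/9$, and one checks $h(1/2)=2$ while $h(4/7)=h(3/5)=h(5/9)=3$, so $h(a_-)=3>2=\min(h_1,h_2)$. Your induction hits exactly this wall: from $h(a_1)=\min(h(a_-),h(a_2))+1$ you get $h(a_-)<h(a_1)$, but in the case $h(a_-)>h(a_2)$ (which the example realises) there is no way to conclude $h(a_-)\le h(a_2)$, because it is not true. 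If you look at what the paper's proof actually derives, it is only $h(a_-)\le h(a_1)$ under the WLOG ordering $q_2<q_1$, i.e.\ the bound $h(a_-)\le\max(h_1,h_2)$ rather than $\min$. For that corrected statement your inductive recurrence closes immediately (case analysis on which of $h(a_-),h(a_2)$ is smaller), and the paper's translation argument goes through as written; so the discrepancy is in the lemma's statement, not in either proof strategy.
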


\begin{proof}
Posons $\gamma=\mat{p_1}{p_2}{q_1}{q_2}$, afin d'obtenir $\frac{p_1}{q_1}=\gamma \infty$ et $\frac{p_2}{q_2}=\gamma 0$. L'hypothèse $d\left(\frac{p_1}{q_1},\frac{p_2}{q_2}\right)=1$ se traduit par $\gamma\in\Gamma$ et ainsi l'invariance de $d$ par $\Gamma$ se traduit par:
\begin{center}
Les pointes à distance $1$ de $\gamma \infty$ et $\gamma 0$ sont $\gamma (1)=\frac{p_1+p_2}{q_1+q_2}$ et $\gamma (-1)=\frac{p_1-p_2}{q_1-q_2}$.
\end{center}\par
Pour calculer leurs hauteurs, on commence par remarquer que pour tout $\alpha \in B_1,h(\alpha)=d(\infty,\alpha)$ et ainsi la pointe qui nous concerne après translation par $\gamma$ provient de $\gamma (\frac{-q_2}{q_1})=\infty$. Or on sait que $\frac{-q_2}{q_1}\in B_3\cup B_4\cup \{-1\}$. Lorsque $\frac{-q_2}{q_1}=-1$ le résultat est vérifiable simplement car $d(-1,\infty)=d(-1,0)=d(-1,1)-1$. Sinon, par symétrie entre $p_1/q_1$ et $p_2/q_2$, on peut supposer $\frac{-q_2}{q_1}\in B_3$ et on obtient:
$$d(\frac{-q_2}{q_1},-1)\leq d(\frac{-q_2}{q_1},0)\leq d(\frac{-q_2}{q_1},\infty)\leq d(\frac{-q_2}{q_1},1).$$
La première inégalité se traduit par $h\left(\frac{p_1-p_2}{q_1-q_2}\right)\leq h\left(\frac{p_1}{q_1}\right)$ après translation par $\gamma$. Par l'absurde, on montre que parmi la deuxième et la troisième inégalité au moins une est stricte faute de contredire le début du Lemme. Ainsi on obtient:
$$d(\frac{-q_2}{q_1},0)<d(\frac{-q_2}{q_1},1)\leq d(\frac{-q_2}{q_1},0) + d(0,1)=d(\frac{-q_2}{q_1},0)+1.$$
Ceci se translate par $\gamma$ en $h\left(\frac{p_1+p_2}{q_1+q_2}\right)= h\left(\frac{p_1}{q_1}\right)+1$.
\end{proof}

Nous avons désormais les outils pour montrer que $\mathcal{J}_2$ est de type fini. 
Commençons par appliquer la méthode à l'idéal $\mathcal{I}_1$ des relations de Manin.
L'idéal $\mathcal{I}_1$ est le noyau de l'application $\Theta_1:\Z[\Gamma]\to \Z[\pte]^0$. Un élément du groupe $\Gamma$ est une arrête orientée de $\mathcal{G}_1$ dont les extrémités dans $\pte$ sont données par l'application $\Theta_1$. Ainsi un élément du groupe $\Z[\Gamma]$ correspond à un $1$-cycle du graphe $\mathcal{G}_1$ et son image par $\Theta_1$ correspond à son bord. Les chemins de $\mathcal{I}_1$, c'est-à-dire annulant $\Theta_1$, sont ainsi les chemins fermées de $\mathcal{G}_1$.\par

Pour un élément $\sum \lambda_{\gamma} [\gamma]\in\Z[\Gamma]$, définissons son support la partie de $\pte$ donnée par: 
$$Supp(\sum \lambda_{\gamma} [\gamma])=\bigcup_{\lambda_{\gamma}\neq 0} \{\gamma \infty,\gamma 0\}.$$
Notons $C^0(X)$ pour les $1$-cycles fermées de $\mathcal{G}_1$ à support dans $X$ une partie de $\pte$:
$$C^0(X)=\{g\in\mathcal{I}_1\text{ tel que }Supp(g)\subset X\}.$$
La partition de $\pte$ (\ref{partpte}) et les propriétés d'invariances élémentaires montrent que:
\begin{align*}
\mathcal{I}_1\cong C^0(\pte)&= C^0(B_0\cup B_1) + C^0(B_0\cup B_2) + C^0(B_0\cup B_3) + C^0(B_0\cup B_4)\\
&=C^0(B_0\cup B_1) + \varepsilon S.C^0(B_0\cup B_1) + \varepsilon .C^0(B_0\cup B_1) + S.C^0(B_0\cup B_1).
\end{align*}
Ainsi on recherche les chaînes fermées à support dans $B_0\cup B_1$. Une méthode de descente sur la hauteur donne alors:

\begin{prop}
L'idéal $\mathcal{I}_1$ est engendré comme $\Z[\Gamma]$-module par les chemins fermés de $\mathcal{G}_1$ à support dans $B_0$:
\begin{equation}
\mathcal{I}_1 = \Z[\Gamma] . C^0(B_0)=\Z[\Gamma] (1+S,1+U+U^2).
\end{equation}
\end{prop}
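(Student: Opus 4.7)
Je raisonnerais par r�currence sur la hauteur maximale $N=\max_{a\in\mathrm{Supp}(g)} h(a)$ d'un �l�ment $g\in\mathcal{I}_1$. Gr�ce � la d�composition
$$C^0(\pte) = C^0(B_0\cup B_1) + \varepsilon S\cdot C^0(B_0\cup B_1) + \varepsilon\cdot C^0(B_0\cup B_1) + S\cdot C^0(B_0\cup B_1)$$
obtenue juste avant l'�nonc�, et � la $\Z[\Gamma]$-lin�arit� du r�sultat cherch�, il suffit de traiter le cas $g\in C^0(B_0\cup B_1)$. L'inclusion $\Z[\Gamma]\cdot C^0(B_0)\subset\mathcal{I}_1$ est �vidente ; seule la r�ciproque demande un argument.

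Pour le cas de base $N\leq 1$, les matrices $\gamma\in\Gamma$ v�rifiant $\{\gamma\infty,\gamma 0\}\subset B_0=\{\infty,0,1,-1\}$ sont en nombre fini (au plus dix, correspondant aux deux orientations des cinq ar�tes du sous-graphe de $\mathcal{G}_1$ induit par $B_0$). Un calcul explicite dans ce $\Z$-module de rang fini montre que $C^0(B_0)$ est engendr� par les bigones $\gamma(1+S)$ et les triangles $\gamma(1+U+U^2)$ pour $\gamma$ stabilisant $B_0$ ; ces �l�ments sont tous des translat�s des deux g�n�rateurs souhait�s.

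Pour l'h�r�dit� $N\geq 2$, je choisirais un sommet $a=(p_1+p_2:q_1+q_2)\in B_1$ de hauteur $N$ dans le support de $g$, et j'invoquerais le Lemme \ref{lemtri} pour exhiber ses deux parents de Farey $a_1=(p_1:q_1)$ et $a_2=(p_2:q_2)$, avec $\min(h(a_1),h(a_2))=N-1$. La matrice $\gamma=\mat{p_1}{p_2}{q_1}{q_2}\in\Gamma$ d�finit un triangle $\{a_1,a_2,a\}$ de $\mathcal{G}_1$, dont le bord est pr�cis�ment $\gamma(1+U+U^2)\in\mathcal{I}_1$. J'utiliserais cet �l�ment, joint � des translat�s appropri�s de $1+S$ pour neutraliser les allers-retours, afin de r��crire modulo $\Z[\Gamma](1+S,1+U+U^2)$ chaque ar�te incidente � $a$ en combinaison d'ar�tes entre $a_1$, $a_2$ et d'autres voisins de hauteur strictement inf�rieure � $N$. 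La condition de cycle $\Theta_1(g)=0$ en $a$ assurera qu'apr�s un nombre fini d'op�rations, $a$ dispara�t enti�rement du support.

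L'obstacle principal sera de contr�ler que ces r��critures n'introduisent pas de nouveaux sommets de hauteur $\geq N$ ailleurs. La cl� r�side dans l'estimation $h\bigl((p_1-p_2)/(q_1-q_2)\bigr)\leq\min(h(a_1),h(a_2))=N-1$ du Lemme \ref{lemtri} : en exploitant syst�matiquement les triangles dont le troisi�me sommet est de hauteur $\leq N-1$, on garantit que toutes les ar�tes cr��es ont leurs extr�mit�s de hauteur $<N$. En it�rant sur l'ensemble (fini) des sommets de hauteur $N$ dans $\mathrm{Supp}(g)$, on aboutit � un �l�ment de $\mathcal{I}_1$ de hauteur maximale strictement inf�rieure � $N$, sur lequel s'applique l'hypoth�se de r�currence ; le cas de base conclut alors.
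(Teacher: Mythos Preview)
Your proposal follows essentially the same descent-on-height strategy as the paper's proof, invoking Lemma~\ref{lemtri} and the cycle condition $\Theta_1(g)=0$ at the maximal-height vertex to eliminate it via triangle relations. The organization differs slightly: you fix the Farey parents $a_1,a_2$ of $a$ in advance and propose using the single triangle $\{a,a_1,a_2\}$ together with bigons, whereas the paper pairs up the edges at $b$ two at a time and, for each pair $(b,a_1),(b,a_2)$ (with $a_1,a_2$ the \emph{actual} endpoints appearing in $g$, not the Farey parents), finds a replacement vertex $c$ of lower height common to both, distinguishing the cases $d(a_1,a_2)=1$ and $d(a_1,a_2)=2$. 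The paper's pairing is cleaner because the edges of $g$ incident to $a$ need not go to the Farey parents at all; your single triangle $\gamma(1+U+U^2)$ only relates the edges $(a,a_1)$ and $(a,a_2)$ and cannot by itself reroute an edge $(a,a')$ with $a'\notin\{a_1,a_2\}$. You do recover from this in your final paragraph by invoking ``les triangles dont le troisi\`eme sommet est de hauteur $\leq N-1$'' in the plural, which is exactly what is needed---but then the initial emphasis on the Farey parents becomes superfluous, and what remains is precisely the paper's argument.
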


\begin{proof}
Soit $g=\sum \mu_{\gamma} [\gamma] \in \Ker(\Theta_1)$.
Si $g\neq 0$, il existe un point du support $b\in Supp(g)$ de hauteur maximale. Posons:
$$R_b=\sum_{\gamma\in\Gamma\text{ tel que }b\in Supp([\gamma])} \mu_{\gamma} \Theta_1(\gamma)\text{ avec }\Theta_1(\gamma)=\pm([b]-[a_{\gamma}]).$$
On va remplacer $b$ par un de ces voisins de hauteur inférieure pour réduire la hauteur maximale du support. Sans perdre en généralité, d'après l'invariance par $\Gamma$, on peut supposer $b\in B_1$. Pour $\gamma_1,\gamma_2$ distincts vérifiant $b\in Supp(\gamma_j)$, on peut supposer que $\Theta_1(\gamma_1)=b-a_1$ et $\Theta_1(\gamma_2)=b-a_2$ quitte à changer $\gamma$ par $\gamma S$ et $\mu_{\gamma S}=-\mu_{\gamma}$. On a $a_1\neq a_2$ donc $d(a_1,a_2)\in\{1,2\}$. On distingue alors ces deux cas.\par
Si $d(a_1,a_2)=1$ alors d'après le Lemme \ref{lemtri}, il existe une unique pointe $c$ tel que $b$ et $c$ soit à distance $1$ de $a_1$ et $a_2$. De plus, la hauteur de $b$ étant maximale on obtient $h(c)\leq \min\left[h(a_1),h(a_2)\right]<h(b)$.\par
Si $d(a_1,a_2)=2$ alors la situation est analogue. $a_1=(p_1:q_1)$ est l'image de $1$ et $a_2=(p_2:q_2)$ l'image de $-1$ par la matrice $\gamma=\mat{p_1+p_2}{p_1-p_2}{q_1+q_2}{q_1-q_2}$. On obtient alors $b\in\{\gamma 0,\gamma \infty\}$ car seules deux pointes sont à distance $1$ des pointes $1$ et $-1$ et notons $c$ la seconde différente de $b$. On obtient à nouveau une configuration du type du Lemme \ref{lemtri}. Et ainsi on a: $h(c)+1=\max[h(a_1),h(a_2)]\leq h(b)$.\par
Ainsi dans les deux cas, on a:
$$\Theta_1\left([\gamma_1]-[\gamma_2]\right)=([b]-[a_1])-([b]-[a_2])=([c]-[a_1])-([c]-[a_2])=\Theta_1\left([\gamma_1']-[\gamma_2']\right).$$
Ainsi $[\gamma_1]-[\gamma_2]-[\gamma_1']+[\gamma_2']$ est un élément du noyau. Ces quatre matrices sont à support dans $\{b,a_1,a_2,c\}$ un même translaté de $B_0$. Et comme on a $\sum_{\gamma\in\Gamma\text{ tel que }b\in Supp(\gamma)}\mu_{\gamma}=0$ alors, par itération finie, on peut construire un élément $R_b'$ à support de hauteur inférieure tel que: $R_b-R_b'\in \Z[\Gamma] C^0(B_0)$ et décomposer:
$$g=\sum_{\gamma\in\Gamma\text{ tel que }b\notin Supp[\gamma]}\mu_{\gamma}[\gamma]+R_b'+\left(R_b-R_b'\right).$$
Le support de $g$ étant fini on réduit bien celui-ci à une somme d'éléments de $\Gamma C^0(B_0)$ par récurrence sur la hauteur maximale. On remarquera notamment que dans le déroulement de la démonstration $b$ n'est pas nécessairement l'unique élément de hauteur maximale. Mais l'ensemble de ces pointes est fini et son cardinal diminue strictement car la seule nouvelle pointe introduite vérifie $h(c)<h(b)$.
\end{proof}

Par analogie, il est alors naturel d'introduire le graphe $\mathcal{G}_2$ dont les sommets sont $P_2=\pte^2$ et les arêtes sont $H\cup D\cup V$. Un élément $(\gamma_1,\gamma_2)\in\Gamma^2$ est identifié dans ce graphe par le triangle orienté:
$$[(\gamma_1.\infty,\gamma_2.\infty),(\gamma_1 0,\gamma_2 \infty),(\gamma_1 0,\gamma_2 0)].$$
Son bord est l'image par $\Theta_2$ c'est à dire la somme des trois arêtes orientées dans $H_1(P_2)^0$:
$$[(\gamma_1 \infty,\gamma_2 \infty),(\gamma_1 0,\gamma_2 \infty)]+
[(\gamma_1 0,\gamma_2 \infty),(\gamma_1 0,\gamma_2 0)]+
[(\gamma_1 0,\gamma_2 0),(\gamma_1 \infty,\gamma_2 \infty)].$$\par
Le support d'un élément de $g=\sum \lambda_{\gamma_1,\gamma_2}[\gamma_1,\gamma_2]\in\Z[\Gamma^2]$ est la partie de $\pte^2$ définie par:
$$Supp(g)=\bigcup_{\lambda_{\gamma_1,\gamma_2}\neq 0} \{(\gamma_1 \infty,\gamma_2 \infty),(\gamma_1 0,\gamma_2 \infty),(\gamma_1 0,\gamma_2 0)\}.$$
Le schéma suivant fourni une représentation de la partie de $\mathcal{G}_2$ à support dans $\{\infty,0,1\}^2\subset P_2$. Pour une question de lisibilité nous avons représenté ici huit fois les mêmes neuf couples de pointes de $P_2$ pour illustrer l'ensemble des recouvrement possibles de cette partie. Les couples de matrices représentent alors les différents triangles orientés possibles.

\hbox{
\hspace{1.5cm}

\setlength{\unitlength}{0.7cm}
\begin{picture}(24,5)
\put(0,0){\line(1,0){4}}
\put(0,2){\line(1,0){4}}
\put(0,4){\line(1,0){4}}
\put(0,0){\line(0,1){4}}
\put(2,0){\line(0,1){4}}
\put(4,0){\line(0,1){4}}
\put(0,2){\line(1,1){2}}
\put(0,0){\line(1,1){4}}
\put(2,0){\line(1,1){2}}
\put(1,0.5){$_{(1,1)}$}
\put(0.2,1.5){$_{(S,S)}$}
\put(3,0.5){$_{(U,1)}$}
\put(2.2,1.5){$_{(US,S)}$}
\put(1,2.5){$_{(1,U)}$}
\put(0.0,3.5){$_{(S,US)}$}
\put(3,2.5){$_{(U,U)}$}
\put(2.0,3.5){$_{(US,US)}$}

\put(5,0){\line(0,1){4}}
\put(9,0){\line(0,1){4}}
\put(5,0){\line(1,0){4}}
\put(5,2){\line(1,0){4}}
\put(5,4){\line(1,0){4}}
\put(5,2){\line(2,-1){4}}
\put(5,4){\line(2,-1){4}}
\put(10,0){\line(0,1){4}}
\put(14,0){\line(0,1){4}}
\put(10,0){\line(1,0){4}}
\put(12,0){\line(0,1){4}}
\put(10,4){\line(1,0){4}}
\put(10,4){\line(1,-2){2}}
\put(12,4){\line(1,-2){2}}
\put(15,0){\line(0,1){4}}
\put(19,0){\line(0,1){4}}
\put(15,0){\line(1,0){4}}
\put(15,4){\line(1,0){4}}
\put(15,0){\line(1,1){4}}
\put(5.5,0.5){$_{(U^2,1)}$}
\put(5.5,2.5){$_{(U^2,U)}$}
\put(7,1.2){$_{(U^2S,S)}$}
\put(7,3.5){$_{(U^2S,US)}$}

\put(10.4,3.5){$_{(1,U^2)}$}
\put(12.4,3.5){$_{(U,U^2)}$}
\put(15.1,3.5){$_{(U^2,U^2)}$}
\put(10,0.5){$_{(S,U^2S)}$}
\put(12.1,0.5){$_{(US,U^2S)}$}
\put(17,0.5){$_{(U^2S,U^2S)}$}
\put(0,-0.4){$_{\infty}$}
\put(1.8,-0.4){$_{0}$}
\put(3.9,-0.4){$_{1}$}
\put(4.8,-0.4){$_{\infty}$}
\put(6.8,-0.4){$_{0}$}
\put(8.9,-0.4){$_{1}$}
\put(9.8,-0.4){$_{\infty}$}
\put(11.8,-0.4){$_{0}$}
\put(13.9,-0.4){$_{1}$}
\put(14.8,-0.4){$_{\infty}$}
\put(16.8,-0.4){$_{0}$}
\put(18.9,-0.4){$_{1}$}
\put(-0.8,0){$_{\infty}$}
\put(-0.6,2){$_{0}$}
\put(-0.6,4){$_{1}$}

\end{picture}
}

\bigskip

\hbox{
\hspace{1.5cm}

\setlength{\unitlength}{0.7cm}
\begin{picture}(24,4)
\put(0,0){\line(1,0){4}}
\put(0,2){\line(1,0){4}}
\put(0,4){\line(1,0){4}}
\put(0,0){\line(0,1){4}}
\put(2,0){\line(0,1){4}}
\put(4,0){\line(0,1){4}}

\put(0,4){\line(1,-1){4}}
\put(0,2){\line(1,-1){2}}
\put(2,4){\line(1,-1){2}}

\put(0.2,0.5){$_{(S,1)}$}
\put(1,1.5){$_{(1,S)}$}
\put(2.2,0.5){$_{(US,1)}$}
\put(2.9,1.5){$_{(U,S)}$}
\put(0.1,2.5){$_{(S,U)}$}
\put(0.8,3.5){$_{(1,US)}$}
\put(2.1,2.5){$_{(US,U)}$}
\put(2.7,3.5){$_{(U,US)}$}

\put(5,0){\line(0,1){4}}
\put(9,0){\line(0,1){4}}
\put(5,0){\line(1,0){4}}
\put(5,2){\line(1,0){4}}
\put(5,4){\line(1,0){4}}
\put(5,2){\line(2,1){4}}
\put(5,0){\line(2,1){4}}

\put(10,0){\line(0,1){4}}
\put(14,0){\line(0,1){4}}
\put(10,0){\line(1,0){4}}
\put(12,0){\line(0,1){4}}
\put(10,4){\line(1,0){4}}
\put(10,0){\line(1,2){2}}
\put(12,0){\line(1,2){2}}

\put(15,0){\line(0,1){4}}
\put(19,0){\line(0,1){4}}
\put(15,0){\line(1,0){4}}
\put(15,4){\line(1,0){4}}
\put(15,4){\line(1,-1){4}}

\put(7,0.5){$_{(U^2S,1)}$}
\put(7,2.5){$_{(U^2S,U)}$}
\put(5.5,1.2){$_{(U^2,S)}$}
\put(5.5,3.5){$_{(U^2,US)}$}

\put(10,3.5){$_{(S,U^2)}$}
\put(12.1,3.5){$_{(US,U^2)}$}
\put(17,3.5){$_{(U^2S,U^2)}$}

\put(10.5,0.5){$_{(1,U^2S)}$}
\put(12.4,0.5){$_{(U,U^2S)}$}
\put(15.1,0.5){$_{(U^2,U^2S)}$}

\put(0,-0.4){$_{i\infty}$}
\put(1.8,-0.4){$_{0}$}
\put(3.9,-0.4){$_{1}$}
\put(4.8,-0.4){$_{i\infty}$}
\put(6.8,-0.4){$_{0}$}
\put(8.9,-0.4){$_{1}$}
\put(9.8,-0.4){$_{i\infty}$}
\put(11.8,-0.4){$_{0}$}
\put(13.9,-0.4){$_{1}$}
\put(14.8,-0.4){$_{i\infty}$}
\put(16.8,-0.4){$_{0}$}
\put(18.9,-0.4){$_{1}$}
\put(-0.8,0){$_{i\infty}$}
\put(-0.6,2){$_{0}$}
\put(-0.6,4){$_{1}$}

\end{picture}
}

\bigskip

L'idéal $\mathcal{J}_2$ est le noyau de l'application $\Theta_2:\Z[\Gamma^2]\to H_1(P_2)^0$. Un élément de $\mathcal{J}_2$ correspond donc à une combinaison linéaire de triangles orientés dont les bords orientés se compensent. On notera $C_2^0(X)\subset\Z[\Gamma^2]$ les tels recouvrements de $2$-chaînes fermées à support dans une partie $X\subset P_2$. La décomposition de $\pte$ et le fait que les pointes de $B_0$ sont un point de passage entre deux parties, nous permet d'obtenir:
\begin{equation}
C_2^0(P_2)=\sum_{i_1,i_2=1}^4 C_2^0\left((B_0\cup B_{i_1})\times (B_0\cup B_{i_2})\right).
\end{equation}
On réduit alors ceci par descente sur la hauteur. En effet, les projections suivants les coordonnées respectent la structure du graphe $\mathcal{G}_1$.

\begin{prop}
L'idéal $\mathcal{J}_2$ est engendré comme $\Z[\Gamma^2]$-module par les $2$-chaînes fermées de $\mathcal{G}_2$ à support dans $B_0^2$:
\begin{equation}
\mathcal{J}_2 = \Z[\Gamma^2] . C_2^0(B_0^2).
\end{equation}
\end{prop}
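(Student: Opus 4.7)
La strat�gie est une transposition bidimensionnelle de la descente effectu�e pour la Proposition pr�c�dente concernant $\mathcal{I}_1$, en exploitant l'observation faite plus haut que les projections coordonn�es $P_2\to\pte$ respectent la structure du graphe $\mathcal{G}_1$. On �tend la hauteur en posant $H(a_1,a_2)=\max(h(a_1),h(a_2))$ et on d�finit la hauteur du support d'un �l�ment $g\in\Z[\Gamma^2]$ comme le maximum de $H$ sur $Supp(g)$. L'objectif est de montrer, par descente sur cette hauteur, que tout $g\in\mathcal{J}_2$ peut �tre �crit, modulo $\Z[\Gamma^2]\cdot C_2^0(B_0^2)$, comme un �l�ment de $\mathcal{J}_2$ de support inclus dans $B_0^2$, lequel appartient alors directement � $C_2^0(B_0^2)$.

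La premi�re �tape utilise la $\Gamma^2$-�quivariance pour r�duire le support de $g$. L'action des matrices $(S,1)$, $(\varepsilon,1)$, $(1,S)$, $(1,\varepsilon)$ permute les parties $B_j$ tout en fixant $B_0$; combin�e avec la d�composition rappel�e $C_2^0(P_2)=\sum_{i_1,i_2=1}^4 C_2^0\bigl((B_0\cup B_{i_1})\times(B_0\cup B_{i_2})\bigr)$, elle ram�ne l'�tude au cas o� $Supp(g)\subset(B_0\cup B_1)^2$. Soit alors $(b_1,b_2)\in Supp(g)$ de hauteur maximale, avec $(b_1,b_2)\notin B_0^2$; quitte � sym�triser les r�les des coordonn�es, on suppose $b_1\in B_1$. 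On regroupe les triangles orient�s $(\gamma_1,\gamma_2)$ du support admettant $(b_1,b_2)$ comme sommet. La condition $\Theta_2(g)=0$ impose que la somme des ar�tes incidentes � $(b_1,b_2)$ se compense dans $H_1(P_2)^0$; en projetant sur la premi�re coordonn�e, on obtient une relation ferm�e de $\mathcal{G}_1$ au sommet $b_1$, � laquelle on applique le Lemme \ref{lemtri} pour trouver un voisin $c_1$ de $b_1$ strictement plus bas.

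La difficult� technique principale consistera � r�aliser cette r�duction coordonn�e dans l'espace $\Z[\Gamma^2]\cdot C_2^0(B_0^2)$ plut�t que dans un id�al plus gros. Pour chaque paire de triangles $(\gamma_1,\gamma_2),(\gamma_1',\gamma_2')$ de $Supp(g)$ s'annulant apr�s projection sur la premi�re coordonn�e, il faudra construire explicitement un �l�ment $(g_1,g_2)\cdot r$ avec $r\in C_2^0(B_0^2)$ r�alisant le remplacement $(b_1,b_2)\leadsto(c_1,b_2)$ modulo des triangles � sommets de hauteur strictement inf�rieure. Les diagrammes de recouvrement de $\{\infty,0,1\}^2$ donn�s plus haut fournissent le mod�le de ces constructions locales : ils mettent en �vidence les quatre types de relations (deux coordonn�es et deux diagonales, correspondant pr�cis�ment aux �l�ments (\ref{defI2})) qui engendrent $C_2^0(B_0^2)$ et permettent d'effectuer toutes les descentes �l�mentaires. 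Une r�currence sur le cardinal fini des sommets de hauteur maximale du support ach�vera l'argument, puisque chaque �tape remplace strictement un tel sommet par un sommet de hauteur moindre, garantie par le Lemme \ref{lemtri}.
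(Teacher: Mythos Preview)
Votre plan suit la m�me id�e que la d�monstration de l'article : r�duire par descente sur la hauteur en se ramenant au mod�le local $B_0$ via le Lemme~\ref{lemtri}. Deux points m�ritent toutefois d'�tre signal�s.

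D'abord, l'usage de $(\varepsilon,1)$ et $(1,\varepsilon)$ pour ramener le support � $(B_0\cup B_1)^2$ est ill�gitime : $\varepsilon\notin\Gamma=PSL_2(\Z)$, et l'�nonc� porte sur $\Z[\Gamma^2]$. Ce n'est pas fatal --- l'argument de descente s'applique de la m�me fa�on dans chaque $B_j$ et la r�duction ``par sym�trie'' ne sert qu'� all�ger la r�daction --- mais vous devez le formuler comme un ``sans perte de g�n�ralit�'' sur $b_1$, pas comme une translation par $\Gamma^2$.

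Ensuite, et c'est la diff�rence d'organisation avec l'article, votre descente sur la hauteur combin�e $H(a_1,a_2)=\max(h(a_1),h(a_2))$ vous oblige � affirmer que chaque pas �l�mentaire $(b_1,b_2)\leadsto(c_1,b_2)$ est directement un $\Gamma^2$-translat� d'un �l�ment de $C_2^0(B_0^2)$. Or les triangles incidents � $(b_1,b_2)$ peuvent avoir des secondes coordonn�es $\gamma_2$ distinctes, de sorte que le support en seconde coordonn�e de la relation locale n'a aucune raison de tenir dans un translat� de $B_0$. L'article contourne cette difficult� en proc�dant \emph{s�quentiellement} : on filtre par $H_1(P_2)^0_{(M_1,M_2)}$ et on r�duit d'abord $M_1$ � $1$ par des $(\Gamma,1)$-translat�s (la seconde coordonn�e restant libre, de hauteur $\leq M_2$), puis $M_2$ � $1$ par des $(1,\Gamma)$-translat�s. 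Chaque �tape n'invoque alors que la descente unidimensionnelle sur une seule coordonn�e, et la composition $(\Gamma,1)\cdot(1,\Gamma)=\Gamma^2$ donne le r�sultat. Votre plan reste r�alisable, mais il serait plus s�r et plus court d'adopter cette r�duction coordonn�e par coordonn�e plut�t que de chercher un mod�le local bidimensionnel � chaque pas.
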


\begin{proof}
Comme $H_1(P_2)^0$ est le sous-groupe des chaînes transverses alors on va traiter une coordonnée puis l'autre par projection. Définissons pour tout couple d'entiers $M_1,M_2>0$, le sous-ensemble des $2$-chaînes bornées par:
\begin{equation*}
H_1(P_2)^0_{(M_1,M_2)}=\left\{C\in H_1(P_2)^0\text{ tel que }Supp(C)\subset (h\times h)^{-1}([0,M_1]\times[0,M_2])\right\}.
\end{equation*}
Donc les relations vérifiées par $H_1(P_2)^0_{(M_1,M_2)}$ sont les translatés par $(\Gamma,1)\subset\Gamma^2$, agissant sur la première coordonnées, de celles vérifiées par $H_1(P_2)^0_{(1,M_2)}$. Puis le même raisonnement sur la seconde coordonnée réduit l'étude de $H_1(P_2)^0_{(1,M_2)}$ à celle de $H_1(P_2)^0_{(1,1)}$. Ce dernier correspond bien à $C_2^0(B_0^2)$.
\end{proof}

\subsubsection{Les générateurs de $\mathcal{I}_2$}

\begin{thm}\label{thm3}
L'idéal $\mathcal{J}_2$ annulateur de $\partial T_2$ dans $\Z[\Gamma^2]$ est:
\begin{multline*}
\mathcal{J}_2=\mathcal{I}_2=\Big\langle (1+S,1+S),(S,S)+(S,US)+(US,US)+(1,U)-(U^2,U^2),\\
(1+U+U^2,1)((1,1)+(S,S)),(1,1+U+U^2)((1,1)+(S,S))\Big\rangle
\end{multline*}
\end{thm}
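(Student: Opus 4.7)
My plan is to prove the two inclusions $\mathcal{I}_2 \subset \mathcal{J}_2$ and $\mathcal{J}_2 \subset \mathcal{I}_2$ separately, with the latter carrying the bulk of the work.

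For $\mathcal{I}_2 \subset \mathcal{J}_2$ I would verify directly that each of the four listed generators annihilates $\partial T_2$ in $H_1(P_2)^0$. Three of them, namely $(1+S,1+S)$, $(1+U+U^2,1)((1,1)+(S,S))$ and $(1,1+U+U^2)((1,1)+(S,S))$, are built from tensor products of the one-variable Manin relators $1+S$ and $1+U+U^2$; for these, the verification splits along coordinates and reduces to the known identity $\langle 1+S, 1+U+U^2\rangle \subset \mathcal{I}_1$, combined with the decomposition of transverse $1$-chains into $H$, $V$ and $D$ from Section 2.3. The genuinely two-variable generator $(S,S)+(S,US)+(US,US)+(1,U)-(U^2,U^2)$ requires a direct check: I would tabulate the boundary edges of its five triangles in $\Z[P_2^2]$ and show that after cancellation the remaining $1$-chains each factor through one of $\varphi_0^g, \varphi_1^g, \varphi_2^g$, hence lie in $\sum_j \Z[P_2^2]^0_j$.

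For the converse, the previous proposition reduces the problem to showing that every closed $2$-chain with vertex support in $B_0^2$, where $B_0 = \{\infty,0,1,-1\}$, belongs to $\mathcal{I}_2$. The set of $\gamma \in \Gamma$ sending $\{\infty,0\}$ into $B_0$ consists of exactly $12$ elements, corresponding to the directed edges of the complete graph on $B_0$, so $C_2^0(B_0^2)$ sits as a submodule of the free $\Z$-module on $144$ admissible pairs $(\gamma_1,\gamma_2)$. My strategy is a coordinate-by-coordinate reduction: the third and fourth generators allow one to replace, modulo $\mathcal{I}_2$, any triangle whose second coordinate involves a classical Manin relator by a simpler one; iteration reduces to the case where the second coordinate lies in $\{1,S\}$, and applying the same process to the first coordinate (using the coordinate-swap symmetry built into the symmetric shape of $\mathcal{I}_2$) reduces to closed chains with both coordinates in $\{1,S\}$. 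On this residual piece, the relator $(1+S,1+S)$ and the diagonal generator $(S,S)+(S,US)+(US,US)+(1,U)-(U^2,U^2)$ should suffice; the dihedral symmetry generated by $S$ and $\varepsilon$ on each factor further cuts down the number of orbit representatives to check.

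The main obstacle is the final combinatorial check. Although the ambient $\Z$-module is finite and the candidate generators are explicit, one must carefully track orientation signs and the transverse-chain equivalence, making sure that no residual relation slips through beyond those produced by the four given ones. A concrete way to carry this out is to write down the presentation matrix of the quotient $C_2^0(B_0^2) / \bigl(\mathcal{I}_2 \cap C_2^0(B_0^2)\bigr)$, using as rows the pullbacks of the four generators by the finite set of relevant $\Z[\Gamma^2]$-translates, and verify that this matrix has full rank over $\Z$, which is a finite linear-algebra computation. The identification of the diagonal generator as the unique new $2$-dimensional relation (beyond the relations pulled back from each factor by the tensor construction) is the conceptual heart of the argument.
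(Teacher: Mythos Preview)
Your overall strategy matches the paper's: verify that the four listed elements lie in $\mathcal{J}_2$, invoke the previous proposition to reduce the reverse inclusion to a finite problem on $C_2^0(B_0^2)$, and finish by a finite linear-algebra check. Two points deserve correction.

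For the inclusion $\mathcal{I}_2\subset\mathcal{J}_2$, the paper does not track boundary edges of triangles directly. It uses instead the identification $\mathcal{J}_2=I_H\cap I_D\cap I_V$ from Proposition~\ref{inclhvd} and exhibits, for each of the four generators, an explicit decomposition as an element of $I_H$, of $I_V$, and of $I_D$ separately. This is cleaner than your proposed edge-tabulation and avoids any ambiguity about the transverse equivalence.

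For the reverse inclusion, your intermediate reduction has a gap. You invoke a ``coordinate-swap symmetry built into the symmetric shape of $\mathcal{I}_2$'', but $\mathcal{I}_2$ is \emph{not} manifestly swap-symmetric: the generator $(S,S)+(S,US)+(US,US)+(1,U)-(U^2,U^2)$ is asymmetric, and $T_2=[(\infty,\infty),(0,\infty),(0,0)]$ is not invariant under exchanging the two factors. The swap of $T_2$ is the orientation-reversal of $(S,S)\cdot T_2$, so the symmetry one actually gets is $g\in\mathcal{J}_2\Leftrightarrow \sigma(g)\,(S,S)\in\mathcal{J}_2$, which involves \emph{right} multiplication by $(S,S)$ and does not automatically preserve a left ideal. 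Likewise, the claimed reduction of each coordinate to $\{1,S\}$ does not follow from the generators as written: the relators containing $1+U+U^2$ always carry the factor $[(1,1)+(S,S)]$, which couples the two coordinates and prevents a clean one-variable elimination. The paper does not attempt such a reduction; it only uses the involution $S$ to remove the vertex $-1$ from each coordinate, reducing $B_0$ to $\{\infty,0,1\}$, and then performs an exhaustive (computer-confirmed) check on the resulting $36$ triangles. Your proposed presentation-matrix computation is essentially this same exhaustive step, so the conclusion is sound once you drop the unjustified symmetry shortcut.
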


Le schéma précédent permet d'observer ces annulations des images de $\partial T_2$ dans le graphe $\mathcal{G}_2$. Il nous permet de de proposer les décompositions utiles pour la démonstration. En effet, les recouvrements s'annulent si les segments horizontaux, verticaux et diagonaux se compensent entre eux respectivement. Ces annulations se traduisent par l'appartenance aux idéaux $I_H$, $I_V$ et $I_D$ respectivement.

\begin{proof}
On commence par démontrer que chacun des générateurs appartient bien à $I_H\cap I_D\cap I_V$.
Pour cela, on donne une écriture explicite:
\begin{align*}
(1+S,1+S)&=(1,1+S)(1+S,1)\in I_H\\
&=(1+S,1)(1,1+S)\in I_V\\
&=(1,1+S)[(1,1)+(S,S)]\in I_D 
\end{align*}

Les deux derniers sont dans $I_D$ comme multiple de $(1,1)+(S,S)$ et on a:
\begin{align*}
(1,1+U+U^2)&[(1,1)+(S,S)]=(1,S+US+U^2S)(1+S,1)+(1,1+U+U^2)(1,1-U^2S)\in I_H\\
&=(1,1+U+U^2)+(S,1)(1,1+U+U^2)(1,1+S)-(S,1)(1,1+U+U^2)\in I_V
\end{align*}
et
\begin{align*}
(1+U+U^2,1)&[(1,1)+(S,S)]=(S+US+U^2S,1)(1,1+S)+(1+U+U^2,1)(1,1-US)\in I_V\\
&=(1+U+U^2,1)+(1,S)(1+U+U^2,1)(1+S,1)-(1,S)(1+U+U^2,1)\in I_H.
\end{align*}
Finalement, on décompose $(S,S)+(S,US)+(US,US)+(1,U)-(U^2,U^2)$ en somme de deux éléments de $I_H$ en le découpant selon: 
\begin{align*}
(S,S)+(1,U)&=(1+S,1)+(1,U)(1,1-U^2S)\in I_H,\\
(S,US)+(US,US)-(U^2,U^2)&=(1+U,US)(1+S,1)\\
&-(1,US)(1+U+U^2,1)-(U^2,U^2)(1,1-U^2S)\in I_H.
\end{align*}
C'est aussi une somme d'éléments de $I_V$:
\begin{align*}
(US,US)+(1,U)&=(1,1+S)+(1,U)(1-US,1)\in I_V,\\
(S,S)+(S,US)-(U^2,U^2)&=(S,1+U)(1,1+S)-(S,1)(1,1+U+U^2)-(U^2,U^2)(1-US,1)\in I_V.
\end{align*}
et enfin une somme d'éléments de $I_D$:
\begin{align*}
(S,US)+(1,U)&=(1,U)[(1,1)+(S,S)]\in I_D,\\
(S,S)+(US,US)-(U^2,U^2)&=[(1,1)+(U,U)][(1,1)+(S,S)]-[(1,1)+(U,U)+(U^2,U^2)]\in I_D.
\end{align*}
Or on a réduit $\mathcal{I}_2$ à l'étude des relations vérifiées par les triangles à coordonnées parmi $\{0,\infty,1,-1\}^2$. Puis l'action involutive de $S$ permet de transformer les arrêtes contenant la pointe $-1$ sur une de ses coordonnées en des arrêtes à support dans $\{0,\infty,1\}^2$. De plus les coefficients intervenant dans les combinaisons sont nécessairement bornées car il y a un nombre fini d'arêtes. Une étude exhaustive des combinaisons des $36$ triangles obtenus donne bien l'égalité des idéaux. Cette étude a été confirmée par ordinateur par un calcul exhaustif des combinaisons possibles. D'autre part la surjectivité obtenue dans le section suivante serait un argument suffisant pour obtenir le caractère complet de la famille de générateurs.
\end{proof}

La Proposition \ref{prop1} et Théorème \ref{thm3} fournissent un système exhaustif de relations vérifiées par le polynôme des bi-périodes.


\section{Contrôle de l'espace des polynômes des bi-périodes}

Nous allons ici considérer les structures sur $\Q$. Les espaces étant de dimensions finies et sans torsions, les relations sur $\Q$ peuvent être compiler sur $\Z$.
L'espace des formes modulaires $S_k$ peut en réalité être vu comme le produit tensoriel $S_k^{\Q}\otimes_{\Q}\C$ avec $S_k^{\Q}$ les formes dont les coefficients de Fourrier sont rationnelles.
Ceci s'étend aux $2$-formes et on remarquera que l'espace $\Per_{k_1,k_2}$ dispose également d'une $\Q$-structure.\par

La partie précédente nous permet d'obtenir l'inclusion $\Per_{w_1,w_2}\subset W_{w_1,w_2}$. 
On recherche ici à préciser l'écart qu'il existe dans cette inclusion. 
Nous allons alors en déduire une démonstration du Théorème \ref{thm2}.

\subsection{Calcul de la partie Eisenstein d'ordre $2$}

Pour déterminer $E_{w_1,w_2}^{\Q}$, il nous reste à préciser le calcul de $V_{w_1,w_2}^{\Q}[I_D]$ car on a déjà:
\begin{equation*}
V_{w_1,w_2}^{\Q}[I_H] = W_{w_1}^{\Q}\otimes 1\text{ et }V_{w_1,w_2}^{\Q}[I_V] = X_1^{w_1}\otimes W_{w_2}^{\Q}.
\end{equation*}

\begin{prop}\label{propsection}
L'application $\psi: V_{w_1,w_2}^{\Q}[I_D]\to W_{w_1+w_2}^{\Q}, P(X_1,X_2)\mapsto P(Z,Z)$ est surjective.
On dispose d'une section rationnelle.\\ Un élément :
$P(Z)=\sum_{m=0}^{w_1+w_2}\binom{w_1+w_2}{m}a_{m}(-Z)^{w_1+w_2-m}$ admet pour antécédent :
\begin{equation}
Q(X_1,X_2)=\sum_{m_1=0}^{w_1}\sum_{m_2=0}^{w_2} \binom{w_1}{m_1}\binom{w_2}{m_2}a_{m_1+m_2}(-X_1)^{w_1-m_1}(-X_2)^{w_2-m_2}.
\end{equation}
\end{prop}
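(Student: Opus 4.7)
The plan is to exhibit the explicit linear section $\iota: P\mapsto Q$ given in the statement and to check both that it is a one-sided inverse of $\psi$ and that its image always lands in $V_{w_1,w_2}^{\Q}[I_D]$, which yields the surjectivity of $\psi$.

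First I would verify directly the section identity $\psi\circ \iota = \mathrm{id}$. Substituting $X_1=X_2=Z$ in the formula for $Q$ and regrouping the monomials of total degree $w_1+w_2-m$, the inner sum over pairs $(m_1,m_2)$ with $m_1+m_2=m$ collapses via Vandermonde's identity
\begin{equation*}
\sum_{m_1+m_2=m}\binom{w_1}{m_1}\binom{w_2}{m_2}=\binom{w_1+w_2}{m},
\end{equation*}
and one recovers exactly the expansion of $P$.

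The crux is then to prove that $\iota$ is $\Gamma$-equivariant, where $\Gamma$ acts on the source $V_{w_1+w_2}^{\Q}$ in the standard way and on the target $V_{w_1,w_2}^{\Q}$ diagonally. I would check this on the generating family $L_\alpha(Z):=(-Z-\alpha)^{w_1+w_2}$ with $\alpha\in\Q$, which spans $V_{w_1+w_2}^{\Q}$ by a Vandermonde non-vanishing argument applied to $w_1+w_2+1$ distinct values of $\alpha$. Expanding binomially gives the product form
\begin{equation*}
\iota(L_\alpha)(X_1,X_2)=(-X_1-\alpha)^{w_1}(-X_2-\alpha)^{w_2},
\end{equation*}
so that $\iota(L_\alpha)$ factorizes into the same elementary piece in each variable. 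A direct computation with $\gamma=\pm\matg$ yields $\gamma.L_\alpha=(c\alpha-d)^{w_1+w_2}L_{\alpha'}$ with $\alpha'=(b-a\alpha)/(c\alpha-d)$ (using $w_1+w_2$ even to absorb a sign), and the same computation applied coordinate by coordinate gives $(\gamma,\gamma).\iota(L_\alpha)=(c\alpha-d)^{w_1+w_2}\iota(L_{\alpha'})$, with identical scalar factor. Hence $\iota(\gamma.L_\alpha)=(\gamma,\gamma).\iota(L_\alpha)$ on the whole space.

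To conclude I would invoke the fact that the diagonal embedding $\Z[\Gamma]\to\Z[\Gamma^2]$ sends the generators $1+S$ and $1+U+U^2$ of $\mathcal{I}_1$ to $(1,1)+(S,S)$ and $(1,1)+(U,U)+(U^2,U^2)$, which are exactly the generators of $I_D$ established in the preceding subsection. By equivariance, $P\in W_{w_1+w_2}^{\Q}$ forces $\iota(P)\in V_{w_1,w_2}^{\Q}[I_D]$, completing the surjectivity. I expect the only delicate point to be the sign bookkeeping in the equivariance calculation; no ingredient beyond the description of $I_D$ already obtained is needed.
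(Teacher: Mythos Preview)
Your proposal is correct. The paper's own proof is a one-line ``par le calcul'' assertion that $Q\in V_{w_1,w_2}^{\Q}[I_D]$ and $Q(Z,Z)=P(Z)$, so in substance both of you verify the explicit section directly; you simply supply the details the paper omits.

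Your organization via $\Gamma$-equivariance of $\iota$ on the generating family $L_\alpha(Z)=(-Z-\alpha)^{w_1+w_2}$ is exactly the algebraic distillation of the remark the paper places immediately after its proof, where the antecedent is motivated by the integral $\int_{\infty}^{0} f(z)(X_1-z)^{w_1}(X_2-z)^{w_2}\d z$ for $f\in S_{w_1+w_2+2}$: your $L_\alpha$ plays the role of the kernel $(X-z)^{w_1+w_2}$ with the integration variable frozen to a rational point, and the product factorisation $\iota(L_\alpha)=(-X_1-\alpha)^{w_1}(-X_2-\alpha)^{w_2}$ is precisely what the paper exploits analytically. What your route buys is that the verification becomes coordinate-free and avoids both the analysis and any direct manipulation of the relations $(1,1)+(S,S)$ and $(1,1)+(U,U)+(U^2,U^2)$ on the coefficients $a_m$; what the paper's remark buys is an explanation of where the formula for $Q$ comes from in the first place. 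Note that since $w_1$ and $w_2$ are individually even, your sign bookkeeping works on each factor separately and you do not actually need the parity of $w_1+w_2$.
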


\begin{proof}
Par le calcul, on trouve bien $Q\in V_{w_1,w_2}^{\Q}[I_D]$ et $Q(Z,Z)=P(Z)$ impliquant la surjectivité de $\psi$.
\end{proof}

\begin{rem}
L'idée de la formulation précédente provient du Théorème d'Eichler-Shimura qui limite la recherche d'antécédent au polynôme des périodes.
Puis la construction d'un antécédent de $P_f(Z)=\int_{\infty}^0f(z)(X-z)^{w_1+w_2}\d z\in W_{w_1+w_2}$ pour $f\in S_{w_1+w_2+2}$, déterminant la surjectivité de $\psi\otimes_{\Q}\C$, nous inspire ce résultat. Posons :
\begin{align*}
Q_f(X_1,X_2)&=\int_{\infty}^0f(z)(X_1-z)^{w_1}(X_2-z)^{w_2} \d z.\\
=\sum_{m_1=0}^{w_1}\sum_{m_2=0}^{w_2}& \binom{w_1}{m_1}\binom{w_2}{m_2} \int_{\infty}^0 f(z)z^{m_1+m_2}(-X_1)^{w_1-m_1}(-X_2)^{w_2-m_2}\d z.
\end{align*}
Cet élément est bien dans $V_{w_1,w_2}[I_D]$ car pour $\gamma\in\Gamma$:
\begin{equation}
(\gamma,\gamma).Q_h(X_1,X_2)=\int_{\gamma^{-1}\infty}^{\gamma^{-1}0}h(z)(X_1-z)^{w_1}(X_2-z)^{w_2} \d z.
\end{equation}
Donc on peut en déduire que $[(1,1)+(S,S)].Q_h=[(1,1)+(U,U)+(U^2,U^2)].Q_h=0$.\par
En développant de la même manière $P_f(Z)$, on peut identifier les coefficients et on obtient un antécédent dans $V_{w_1,w_2}^{\Q}[I_D]$.
\end{rem}

La proposition \ref{propsection} permet d'obtenir une construction inductive de l'espace $V_{w_1,w_2}^{\Q}[I_D]$ et donc de $E_{w_1,w_2}^{\Q}$.

\begin{prop}\label{prop19}
On a la décomposition :
\begin{equation}
V_{w_1,w_2}^{\Q}[I_D] \cong \bigoplus_{w=|w_1-w_2|}^{w_1+w_2} W_w.
\end{equation}
De plus, la suite suivante est exacte :
\begin{equation}\label{suitexEww}
0\to \Q \to W_{w_1}\otimes 1 \times V_{w_1,w_2}^{\Q}[I_D]\times X_1^{w_1}\otimes W_{w_2}\to E_{w_1,w_2}^{Q}\to 0,
\end{equation}
avec dans l'ordre $1\mapsto \big(1-X^{w_1},1-X_1^{w_1}X_2^{w_2},X^{w_1}(1-X_2^{w_2})\big)$ et $(P_H,P_D,P_V)\mapsto P_H-P_D+P_V$.
\end{prop}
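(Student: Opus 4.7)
Le plan est de scinder la preuve en deux parties : �tablir la d�composition de $V_{w_1,w_2}^{\Q}[I_D]$, puis v�rifier l'exactitude de la suite (\ref{suitexEww}).

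Pour la d�composition, je proc�derais par r�currence sur $\min(w_1,w_2)$ en utilisant la surjection $\psi:V_{w_1,w_2}^{\Q}[I_D]\to W_{w_1+w_2}^{\Q}$ fournie par la Proposition \ref{propsection}. Le point cl� est d'identifier son noyau. Un �l�ment $P\in V_{w_1,w_2}^{\Q}[I_D]$ v�rifiant $P(Z,Z)=0$ est divisible par $X_1-X_2$ dans $\Q[X_1,X_2]$, et s'�crit $P=(X_1-X_2)R$ avec $R\in V_{w_1-1,w_2-1}^{\Q}$. Un calcul direct utilisant $ad-bc=1$ donne pour $\gamma=\pm\matg\in\Gamma$:
\begin{equation*}
\gamma^{-1}X_1-\gamma^{-1}X_2 = \frac{X_1-X_2}{(-cX_1+a)(-cX_2+a)},
\end{equation*}
ce qui entra�ne que l'action diagonale en poids $(w_1,w_2)$ sur $(X_1-X_2)R$ �gale $(X_1-X_2)$ fois l'action diagonale en poids $(w_1-1,w_2-1)$ sur $R$. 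On obtient ainsi un isomorphisme $\Ker(\psi)\cong V_{w_1-1,w_2-1}^{\Q}[I_D]$. La section rationnelle de la Proposition \ref{propsection} scinde cette suite exacte courte, et l'it�ration jusqu'au cas initial $V_{w_1-w_2,0}^{\Q}[I_D]=W_{w_1-w_2}^{\Q}$ (en supposant $w_2\leq w_1$, sans perte de g�n�ralit�) fournit la d�composition voulue.

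Pour la suite exacte (\ref{suitexEww}), je v�rifierais d'abord que les trois composantes $1-X_1^{w_1}$, $1-X_1^{w_1}X_2^{w_2}$ et $X_1^{w_1}(1-X_2^{w_2})$ appartiennent respectivement � $W_{w_1}^{\Q}\otimes 1$, $V_{w_1,w_2}^{\Q}[I_D]$ et $X_1^{w_1}\otimes W_{w_2}^{\Q}$ : l'appartenance � $V_{w_1,w_2}^{\Q}[I_D]$ r�sulte des calculs directs $(S,S).1=X_1^{w_1}X_2^{w_2}$, $(S,S).X_1^{w_1}X_2^{w_2}=1$, et analogues avec $U,U^2$. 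L'injectivit� de la premi�re fl�che est imm�diate, la compos�e nulle r�sulte de l'identit� t�lescopique $(1-X_1^{w_1})-(1-X_1^{w_1}X_2^{w_2})+X_1^{w_1}(1-X_2^{w_2})=0$, et la surjectivit� de la seconde fl�che provient de la d�finition de $E_{w_1,w_2}^{\Q}$ comme somme.

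Le c\oe ur de la d�monstration, et l'obstacle principal, est l'exactitude au milieu. Soit $(P_H,P_D,P_V)$ dans le noyau, avec $P_H\in W_{w_1}^{\Q}$, $P_V=X_1^{w_1}Q(X_2)$ o� $Q\in W_{w_2}^{\Q}$, et $P_D=P_H+X_1^{w_1}Q\in V_{w_1,w_2}^{\Q}[I_D]$. En utilisant les identit�s $S.P_H=-P_H$, $S.Q=-Q$, $S.X_1^{w_1}=1$ et $S.1=X_2^{w_2}$ (valides car les poids sont pairs et les polyn�mes sont dans $W$), la d�composition tensorielle de l'action donne
\begin{equation*}
[(1,1)+(S,S)].P_D = (1-X_2^{w_2})P_H(X_1) + (X_1^{w_1}-1)Q(X_2) = 0,
\end{equation*}
soit $(1-X_2^{w_2})P_H(X_1)=(1-X_1^{w_1})Q(X_2)$. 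Les variables �tant disjointes, la factorialit� de $\Q[X_1,X_2]$ et les contraintes de degr� forcent $Q(X_2)=\lambda(1-X_2^{w_2})$ puis $P_H(X_1)=\lambda(1-X_1^{w_1})$ pour un unique $\lambda\in\Q$, montrant que le triplet est exactement l'image de $\lambda$ par la premi�re fl�che. On constate en outre que la seule relation $[(1,1)+(S,S)]$ suffit � conclure, la relation en $U$ �tant automatiquement satisfaite ensuite.
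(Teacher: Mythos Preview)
Your proof is correct and follows essentially the same route as the paper: the same short exact sequence $0\to V_{w_1-1,w_2-1}^{\Q}[I_D]\xrightarrow{\,(X_1-X_2)\cdot\,} V_{w_1,w_2}^{\Q}[I_D]\xrightarrow{\psi} W_{w_1+w_2}^{\Q}\to 0$ split by the section of Proposition~\ref{propsection}, iterated down to the one-variable case. For the exactness of~(\ref{suitexEww}) the paper simply asserts that the displayed telescopic relation is ``la seule formule non triviale'' between the three subspaces; your explicit computation of $[(1,1)+(S,S)].(P_H+X_1^{w_1}Q)$ and the ensuing separation-of-variables argument supplies precisely the justification the paper omits.
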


\begin{proof}
La définition de $V_{w_1,w_2}^{\Q}[I_D]$, voir (\ref{defVID}), nous permet de construire la suite exacte de $\Q$-espaces vectoriels:
$$0 \longrightarrow V_{w_1-1,w_2-1}^{\Q}[I_D] \stackrel{\phi}{\longrightarrow} V_{w_1,w_2}^{\Q}[I_D] \stackrel{\psi}{\longrightarrow} W_{w_1+w_2-2}^{\Q} \longrightarrow 0$$
où on définit les $\Gamma$-morphismes par $\phi(P)(X,Y)=(X-Y)P(X,Y)$ et $\psi(P)(Z)=P(Z,Z)$.
L'application $\phi$ est injective et est bien un $\Gamma$-morphisme car $(\gamma X-\gamma Y)(cX+d)(cY+d)=(ad-bc)(X-Y)$ pour tout $\gamma=\mat{a}{b}{c}{d}\in\Gamma$.
L'image de $\phi$ est l'ensemble des polynômes s'annulant diagonalement, c'est à dire le noyau de $\psi$.
Enfin la Proposition \ref{propsection} donne la surjectivité de $\psi$ ainsi que l'existence d'une section.\par

Pour obtenir la suite exacte (\ref{suitexEww}), on observe que la seule formule non triviale parmi les trois espaces $V_{w_1,w_2}^{\Q}[I_H],V_{w_1,w_2}^{\Q}[I_V]$ et $V_{w_1,w_2}^{\Q}[I_D]$ est:
\begin{equation}
1-X_1^{w_1}X_2^{w_2}=\big(1-X^{w_1}\big)+X^{w_1}\left(1-X^{w_2}\right).
\end{equation}
Donc en particulier $\dim E_{w_1,w_2}^{\Q}=\dim V_{w_1,w_2}^{\Q}[I_H]+\dim V_{w_1,w_2}^{\Q}[I_V]+\dim V_{w_1,w_2}^{\Q}[I_D] -1$.
\end{proof}

La démonstration nous fournit un algorithme de calcul de ces espaces pour les petites dimensions:\par
1) Lorsque $w_1=0$ ou $w_2=0$, il n'y a qu'une variable et on a:
$$V_{0,w}^{\Q}[I_D]\cong V_{w,0}^{\Q}[I_D]\cong W_w^{\Q},\quad\text{pour tout } w\geq 0.$$\par
2) Pour tout $w\leq 8$, $W_w^{\Z}=<1-Z^{k-2}>$ donc on a pour $w_1+w_2\leq 8$:
\begin{align*}
V_{2,2}^{\Q}[I_D]&=<1-X_1^2X_2^2,(X_1-X_2)(1-X_1X_2)>,\\
V_{2,4}^{\Q}[I_D]&=<1-X_1^2X_2^4,(X_1-X_2)(1-X_1X_2^3),(X_1-X_2)^2(1-X_2^2)>,\\
V_{2,6}^{\Q}[I_D]&=<1-X_1^2X_2^6,(X_1-X_2)(1-X_1X_2^5),(X_1-X_2)^2(1-X_2^4)>,\\
V_{4,4}^{\Q}[I_D]&=<1-X_1^4X_2^4,(X_1-X_2)(1-X_3X_2^3),(X_1-X_2)^2(1-X_1^2X_2^2),(X_1-X_2)^3(1-X_1X_2)>.
\end{align*}
3) Lorsque $w_1+w_2=10$, on a:
$$W_{10}^{\Q}=<1-X^{10},\quad X^2-3X^4+3X^6-X^8,\quad 4X-25X^3+42X^5-25X^7+4X^9>.$$
Et ainsi on obtient par exemple:
$$V_{2,8}^{\Q}[I_D]=<1-X_1^2X_2^8,(X_1-X_2)(1-X_1X_2^7),(X_1-X_2)(1-X_2^6),P_{2,8}^+(X_1,X_2),P_{2,8}^-(X_1,X_2)>,$$
avec :
\begin{align*}
P_{2,8}^+(X_1,X_2)&=
\left(\frac{28}{45} X_2^2-\frac{70}{70}X_2^4+\frac{28}{70}X_2^6-\frac{1}{45}X_2^8\right)
+2X_1\left(\frac{8}{45}X_2-\frac{56}{70}X_2^3+\frac{56}{70}X_2^5-\frac{8}{45}X_2^7\right)\\
&+X_1^2\left(\frac{1}{45}-\frac{28}{70}X_2^2+\frac{70}{70}X_2^4-\frac{28}{45}X_2^6\right),\\
P_{2,8}^-(X_1,X_2)&=\left(\frac{16}{5} X_2-\frac{35}{3}X_2^3+\frac{28}{3}X_2^5-\frac{5}{3}X_2^7\right)
+2X_1\left(\frac{2}{5}-\frac{35}{6}X_2^2+\frac{35}{3}X_2^4-\frac{35}{6}X_2^6+\frac{2}{5}X_2^8\right)\\
&+X_1^2\left(-\frac{5}{3}X_2+\frac{28}{3}X_2^3-\frac{35}{3}X_2^5+\frac{16}{5}X_2^7\right).
\end{align*}

\subsection{Action de conjugaison sur $W_{w_1,w_2}$}

Notons $\varepsilon=\pm\mat{-1}{0}{0}{1}\in PGL_2(\Z)$. Il agit par conjugaison sur $\Gamma, \H$ et $V_{w}$ par:
\begin{equation}
\varepsilon\matg\varepsilon=\mat{\phantom{-}a}{-b}{-c}{\phantom{-}d},\quad 
\varepsilon.z=-\bar{z}\text{ et }
\varepsilon.P(X) = P(-X).
\end{equation}

Dans le cas classique, on remarque notamment que $\varepsilon\mathcal{I}_1\varepsilon=\mathcal{I}_1$ puis que pour $f\in S_k^{\Q}$, alors :
\begin{equation}\label{modeps}
\omega_f(\varepsilon.z,X)=\omega_f(-\bar{z},X)=\overline{\omega_f(z,-X)}=\varepsilon.\omega_{\bar{f}}(z,X),
\end{equation}
avec $\bar{f}\in \overline{S_{k}^{\Q}}$ une forme antiholomorphe sur $\H$.
Ceci permet d'obtenir le résultat (\ref{Wsom}) dû à Eichler et Shimura.\par

Ces actions s'étendent diagonalement sur $\Gamma^2$ et $\H^2$ pour les trois éléments $(1,\varepsilon), (\varepsilon,1)$ et $(\varepsilon,\varepsilon)$.\par

Commençons par regarder leurs actions sur l'idéal $\mathcal{I}_2$, on a:
\begin{align}
(\varepsilon,\varepsilon)\mathcal{I}_2(\varepsilon,\varepsilon)&=\mathcal{I}_2\label{symI2}\\
\text{et }(1,\varepsilon)\mathcal{I}_2(1,\varepsilon)&=(\varepsilon,1)\mathcal{I}_2(\varepsilon,1).
\end{align}
Notons $\mathcal{I}_2^{-}$ ce second idéal alors:
\begin{multline*}
\mathcal{I}_2^{-}=\Big\langle (1+S,1+S),(S,S)+(S,VS)+(US,VS)+(1,V)-(U^2,V^2),\\
(1+U+U^2,1)((1,1)+(S,S)),(1,1+U+U^2)((1,1)+(S,S))\Big\rangle,
\end{multline*}
avec $V=\varepsilon U\varepsilon = SU^2S = \mat{0}{-1}{1}{1}$.\par

La stabilité de $\H^2$ et donc celle $\tau_2$ par la conjugaison, nous contraint à introduire:
\begin{equation}
W_{w_1,w_2}^{-,\Q} = V_{w_1,w_2}^{\Q}[\mathcal{I}_2^-] = (1,\varepsilon) W_{w_1,w_2}^{\Q}.
\end{equation}
Nous noterons $W_{w_1,w_2}^{-}$ son extension au complexe.\par

Soient $\epsilon_1,\epsilon_2\in\{+,-\}$ des signes. On note $\Per_{w_1,w_2}^{\epsilon_1,\epsilon_2}$ l'espace engendré par les polynômes des bi-périodes $P_{f_1,f_2}$ avec respectivement $f_j$ holomorphe si $\epsilon_j=+$ et antiholomorphe si $\epsilon_j=-$.
La relation (\ref{modeps}) permet d'observer que:
\begin{align}
\Per_{w_1,w_2} &= \Per_{w_1,w_2}^{+,+} = (\varepsilon,\varepsilon) \Per_{w_1,w_2}^{-,-} =\overline{\Per_{w_1,w_2}^{-,-}},\\
(1,\varepsilon)\Per_{w_1,w_2} &= \Per_{w_1,w_2}^{+,-} = (\varepsilon,\varepsilon) \Per_{w_1,w_2}^{-,+} =\overline{\Per_{w_1,w_2}^{-,+}}.
\end{align}

La décomposition de $W_{w_1,w_2}$ se fait alors couplée à celle de $W_{w_1,w_2}^-$.

\begin{prop}
On a la décomposition suivante:
\begin{equation}\label{decSec}
W_{w_1,w_2}+W_{w_1,w_2}^{-}
=\left(E_{w_1,w_2}+(1,\varepsilon)E_{w_1,w_2}\right)\oplus \bigoplus_{(\epsilon_1,\epsilon_2)\in\{+,-\}^2} \Per^{\epsilon_1,\epsilon_2}_{w_1,w_2}.
\end{equation}
\end{prop}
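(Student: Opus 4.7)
The plan is to combine the decomposition of $W_{w_1,w_2}$ arising from the proof of Theorem \ref{thm1} with its transform under the involution $(1,\varepsilon)$, and then to establish directness of the total sum by lifting to the level of harmonic $2$-forms on $\H^2$. Concretely, every $P \in W_{w_1,w_2}$ is of the form $\int_{\tau_2}\omega$ for some $\omega \in \Omega_{k_1,k_2}$ (by Proposition \ref{incljkk}), and Lemma \ref{lem1} splits $\omega = \omega_+ + \omega_- + \omega_0$ with $\omega_\pm \in \Omega_{k_1,k_2}^{\pm}$ and $\omega_0$ exact. Integration yields the identification $W_{w_1,w_2} = E_{w_1,w_2} + \Per_{w_1,w_2}^{+,+} + \Per_{w_1,w_2}^{-,-}$, since $\overline{\Per_{w_1,w_2}} = \Per_{w_1,w_2}^{-,-}$. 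Applying $(1,\varepsilon)$ and using the computations $(1,\varepsilon)\Per_{w_1,w_2}^{\epsilon_1,+} = \Per_{w_1,w_2}^{\epsilon_1,-}$ recorded just before the proposition, the same argument gives $W_{w_1,w_2}^{-} = (1,\varepsilon)E_{w_1,w_2} + \Per_{w_1,w_2}^{+,-} + \Per_{w_1,w_2}^{-,+}$. Summing these identifications yields the set-theoretic equality underlying (\ref{decSec}).

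To upgrade this to a direct sum, I would extend Lemma \ref{lem1} to incorporate mixed holomorphy types by enlarging $\Omega_{k_1,k_2}$ to a space $\Omega_{k_1,k_2}^{\mathrm{ext}}$ containing each subspace $\Omega_{k_1,k_2}^{\epsilon_1,\epsilon_2}$ spanned by forms $\omega_{h_1,h_2}$ with $h_j$ holomorphic if $\epsilon_j=+$ and antiholomorphic if $\epsilon_j=-$. The Petersson pairing of order $2$ pairs harmonic forms of different holomorphy types to zero by a direct residue computation, so the four subspaces are pairwise orthogonal and also orthogonal to the exact forms. Combined with the Eichler--Shimura injectivity of $f \mapsto P_f$ applied in each variable, integration against $\tau_2$ then embeds each $\Omega_{k_1,k_2}^{\epsilon_1,\epsilon_2}$ onto the independent summand $\Per_{w_1,w_2}^{\epsilon_1,\epsilon_2}$ of $V_{w_1,w_2}$.

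For the directness of the Eisenstein part against the sum of periods, any element of the intersection $\bigl(E_{w_1,w_2} + (1,\varepsilon)E_{w_1,w_2}\bigr) \cap \sum_{\epsilon_1,\epsilon_2} \Per_{w_1,w_2}^{\epsilon_1,\epsilon_2}$ lifts to an exact form $\omega_0 \in \Omega_{k_1,k_2}^0$ whose integral $\int_{\tau_2}\omega_0$ equals that of a harmonic form; applying Proposition \ref{incljkk} and the non-degeneracy of the bracket then forces the harmonic form to be exact, hence zero by the extended Hodge decomposition. The main technical obstacle will be constructing the mixed extension $\Omega_{k_1,k_2}^{\mathrm{ext}}$ and verifying that integration against $\tau_2$ remains a non-degenerate pairing against the appropriate cycle space; this should follow from applying the arguments of Proposition \ref{incljkk} separately to each of the four holomorphy sectors, using that the transversality conditions $\varphi_j^*\omega=0$ and the closedness $d\omega=0$ are insensitive to holomorphy type.
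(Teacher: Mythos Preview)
Your approach is genuinely different from the paper's, and the directness portion has a gap.

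The paper does not attempt to extend the Hodge decomposition to mixed holomorphy types. Instead it introduces the linear map
\[
\varphi_S : W_{w_1,w_2}+W_{w_1,w_2}^{-}\longrightarrow W_{w_1}/E_{w_1}\otimes W_{w_2}/E_{w_2},\qquad P\longmapsto [(1,1)+(S,S)].P,
\]
and exploits the shuffle identity $[(1,1)+(S,S)].\tau_2=\tau_1\times\tau_1$, which yields $\varphi_S\bigl(\langle\omega_1\wedge\omega_2,\tau_2\rangle\bigr)=P_{\omega_1}\otimes P_{\omega_2}$. Surjectivity of $\varphi_S$ is then immediate from single-variable Eichler--Shimura, the section built from harmonic $\omega_1,\omega_2$ lands exactly on $\bigoplus_{\epsilon_1,\epsilon_2}\Per_{w_1,w_2}^{\epsilon_1,\epsilon_2}$, and the kernel is computed purely algebraically (showing $\mathcal{I}_2+\langle(1,1)+(S,S)\rangle=I_D$ and handling the $E_{w_j}$-terms via $I_H$, $I_V$) to be $E_{w_1,w_2}+(1,\varepsilon)E_{w_1,w_2}$. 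The direct sum is then automatic: kernel plus image of a section.

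Your directness argument does not go through as written. From $\int_{\tau_2}\omega_0=\int_{\tau_2}\omega_h$ with $\omega_0$ exact and $\omega_h$ harmonic you want to conclude $\omega_h$ is exact, invoking Proposition \ref{incljkk} and ``non-degeneracy of the bracket''. But Proposition \ref{incljkk} identifies the annihilator of $\Omega_{k_1,k_2}$ inside the cycle space; it says nothing about the kernel of $\omega\mapsto\int_{\tau_2}\omega$ on the form side, and that map is certainly not injective on all of $\Omega_{k_1,k_2}$. Likewise, Petersson-orthogonality of the four holomorphy sectors at the level of forms does not by itself transfer to linear independence of their $\tau_2$-integrals in $V_{w_1,w_2}$: you need a way to pass from the simplicial integral $\int_{\tau_2}$ back to a product of single integrals where classical Eichler--Shimura applies. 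That passage is exactly what $\varphi_S$ provides, and it is absent from your proposal. Without it, both the independence of the four $\Per^{\epsilon_1,\epsilon_2}$ and their transversality to the Eisenstein part remain unproved.
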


\begin{proof}
Pour démontrer ce résultat, on commence par vérifier l'exactitude de la suite (\ref{suiteex}). Et ainsi, on étend l'application $\Phi_S$ en : 
\begin{equation*}
\varphi_S: W_{w_1,w_2}+W_{w_1,w_2}^-\to W_{w_1}/E_{w_1}\otimes W_{w_2}/E_{w_2}, 
P\mapsto \left[(1,1)+(S,S)\right] P.
\end{equation*}

L'application est bien définie car la détermination de $\mathcal{I}_2$ montre que l'image par $(1,1)+(S,S)$ de $V_{w_1,w_2}^{\Q}[\mathcal{I}_2]$ est :
$$V_{w_1,w_2}[(1+S,1),(1+U+U^2,1),(1,1+S),(1,1+U+U^2)]=W_{w_1}\otimes W_{w_2}.$$

On montre alors que $\varphi_S$ est surjective. 
Soit $P_1\otimes P_2\in W_{w_1}\otimes W_{w_2}$. 
Alors d'après le Théorème d'Eichler-Shimura la classe de $P_j$ dans $W_{w_j}/E_{w_j}$ admet un unique antécédent $\omega_j\in\Omega_{k_j}^+\oplus \Omega_{k_j}^-$ pour $j=1,2$. Dans ces notations, les signes précisent le caractère holomorphe ou antiholomorphe des $1$-formes invariantes à valeurs dans $V_{w_j}$.\par

Posons $P=\langle \omega_1\wedge\omega_2, \tau_2\rangle$. On sait que $P\in W_{w_1,w_2}+W_{w_1,w_2}^-$ et de plus :
\begin{equation*}
\varphi_S(P) = \left[(1,1)+(S,S)\right].\langle\omega_1\wedge\omega_2, \tau_2\rangle
=\langle\omega_1\wedge\omega_2, \left[(1,1)+(S,S)\right].\tau_2\rangle
=\langle\omega_1\wedge\omega_2, \tau_1\times\tau_1\rangle = P_1\otimes P_2.
\end{equation*}

Ceci nous donne une section de $\varphi_S$ uniquement valable sur le corps des complexes. On remarque de plus que l'image de cette section est bien $\bigoplus_{(\epsilon_1,\epsilon_2)\in\{+,-\}^2} \Per^{\epsilon_1,\epsilon_2}_{w_1,w_2}$, les polynômes des bi-périodes de formes modulaires harmoniques. Cette section nous permet d'obtenir la décomposition (\ref{decSec}) sous réserve de démontrer que :
\begin{equation}
\Ker \varphi_S = E_{w_1,w_2}+(1,\varepsilon)E_{w_1,w_2}.
\end{equation}

Le calcul du noyau reste valide sur $\Q$. On commence par regarder les éléments de $W_{w_1,w_2}^{\Q}$ annulés par $(1,1)+(S,S)$ dans $W_{w_1}^{\Q}\otimes W_{w_2}^{\Q}$. Ce sont les éléments annulés par l'idéal:
$$\mathcal{I}_2+[(1,1)+(S,S)]\Z[\Gamma^2]=\left[(1,1)+(S,S);(1,1)+(U,U)+(U^2,U^2)\right]\Z[\Gamma^2]=I_D.$$
Ceci démontre que $\{P\in W_{w_1,w_2}^{\Q}\text{ tel que }P|_{(1,1)+(S,S)}=0\}=V_{w_1,w_2}^{\Q}[I_D].$\par

On en déduit que $\{P\in W_{w_1,w_2}^{-,\Q}\text{ tel que }P|_{(1,1)+(S,S)}=0\}=(1,\varepsilon)V_{w_1,w_2}^{\Q}[I_D].$

Il nous reste à déterminer les éléments qui s'envoient sur $W_{w_1}^{\Q}\otimes E_{w_2}^{\Q}+E_{w_1}^{\Q}\otimes W_{w_2}^{\Q}$. 

Or on obtient, pour $P\in V_{w_1,w_2}^{\Q}$ la série d'équivalence:
\begin{align*}
P&\in V_{w_1,w_2}^{\Q}[I_H]\Leftrightarrow (1+S,1)P=(1+U+U^2,1)P=(1,1-T)P=0\\
&\Leftrightarrow P|_{(1,1)+(S,S)}=(1,1-S)P\text{ et }(1+S,1)P=(1+U+U^2,1)P=(1,1-T)P=0\\
&\Leftrightarrow P|_{(1,1)+(S,S)}\in (1,1-S)\left(V_{w_1}^{\Q}[1+S,1+U+U^2]\otimes V_{w_2}^{\Q}[1-T]\right)\\
&\Leftrightarrow P|_{(1,1)+(S,S)}\in W_{w_1}^{\Q}\otimes E_{w_2}^{\Q}.
\end{align*}
De plus, on remarque que $(1,\varepsilon)I_H(1,\varepsilon)=I_H$. Donc il n'y a pas de nouveau terme pour la conjugaison.
Ceci se symétrise pour $I_V$ sans difficulté car on a aussi $E_{w_1}^{\Q}=(1-S)V_{w_1}^{\Q}[1-US]$.\par 

On obtient ainsi:
$$\Ker (\varphi_S^{\Q})=V_{w_1,w_2}^{\Q}[I_H]+ V_{w_1,w_2}^{\Q}[I_V]+ V_{w_1,w_2}^{\Q}[I_D]+ (1,\varepsilon) V_{w_1,w_2}^{\Q}[I_D] = E_{w_1,w_2}^{\Q}+(1,\varepsilon).E_{w_1,w_2}^{\Q}.$$\par

En prenant la restriction à l'espace $W_{w_1,w_2}^{\Q}$, on obtient bien que: $\Ker (\Phi_S^{\Q})=E_{w_1,w_2}^{\Q}$. Toutefois l'application n'est pas toujours surjective.
\end{proof}

\subsection{Calcul de l'écart entre $W_{w_1,w_2}$ et $E_{w_1,w_2}$}

La relation (\ref{symI2}) permet d'obtenir que $W_{w_1,w_2}^{\Q}$ est stable par $(\varepsilon,\varepsilon)$. Ainsi on peut considérer les parties paires et impaires par:
\begin{align}
P^+(X_1,X_2)&=\frac{1}{2}\left(P(X_1,X_2)+P(-X_1,-X_2)\right)=\sum_{m_1+m_2\text{ pair}}A_{m_1,m_2}X_1^{m_1}X_2^{m_2},\\
P^-(X_1,X_2)&=\frac{1}{2}\left(P(X_1,X_2)-P(-X_1,-X_2)\right)=\sum_{m_1+m_2\text{ impair}}A_{m_1,m_2}X_1^{m_1}X_2^{m_2}.
\end{align}
Si $P\in W_{w_1,w_2}^{\Q}$ alors $P^+$ et $P^-$ sont aussi des éléments de $W_{w_1,w_2}^{\Q}$.
Cette stabilité par conjugaison double permet de définir les espaces:
\begin{align}
W_{w_1,w_2}^{pair,\Q}&=\{P^+(X_1,X_2)\text{ pour }P\in W_{w_1,w_2}^{\Q}\},\\
\text{et }W_{w_1,w_2}^{imp,\Q}&=\{P^-(X_1,X_2)\text{ pour }P\in W_{w_1,w_2}^{\Q}\}.
\end{align}

Ces ensembles, et plus simplement encore leur dimension, peuvent être compilés car ils sont le noyau des quatre relations de Manin double.

De plus, on obtient la même stabilité pour les idéaux $I_D, I_H$ et $I_V$ donc on peut construire $E_{w_1,w_2}^{pair,\Q}$ et $E_{w_1,w_2}^{imp,\Q}$. 
La dimension de $E_{w_1,w_2}^{\Q}$ donnée par la Proposition \ref{prop19} peut se scinder simplement en parties paire et impaire par :
\begin{align}
V_{w_1,w_2}^{pair,\Q}[I_D]&\cong \bigoplus_{w=|w_1-w_2|}^{w_1+w_2} W_w^{i^{w+w_1+w_2}},\\
\text{et }V_{w_1,w_2}^{imp,\Q}[I_D]&\cong \bigoplus_{w=|w_1-w_2|}^{w_1+w_2} W_w^{-i^{w+w_1+w_2}}.
\end{align}
Lorsqu'un des poids est inférieur à $8$ ou vaut $12$ alors l'espace d'arrivée est nulle et on a $W_{w_1,w_2}=E_{w_1,w_2}$.
On résume dans le tableau les dimensions que l'on a pu calculer:\\

\begin{tabular}{cc|ccc|ccc}
$w_1$ & $w_2$ & $\dim W_{w_1,w_2}^{pair}$ & $\dim E_{w_1,w_2}^{pair}$ & écart & $\dim W_{w_1,w_2}^{imp}$ & $\dim E_{w_1,w_2}^{imp}$ & écart\\
\hline
$10$ & $10$ & $15$ & $13$ & $2$ & $13$ & $12$ & $1$ \\
$14$ & $14$ & $24$ & $22$ & $2$ & $22$ & $21$ & $1$ \\
$16$ & $16$ & $29$ & $27$ & $2$ & $27$ & $26$ & $1$ \\
$18$ & $18$ & $35$ & $33$ & $2$ & $33$ & $32$ & $1$ \\
$20$ & $20$ & $42$ & $40$ & $2$ & $40$ & $39$ & $1$ \\
$24$ & $24$ & $57$ & $55$ & $2$ & $55$ & $54$ & $1$ \\
\hline

$10$ & $14$ & $19$ & $17$ & $2$ & $17$ & $15$ & $2$ \\
$10$ & $16$ & $21$ & $19$ & $2$ & $19$ & $17$ & $2$ \\
$10$ & $18$ & $23$ & $21$ & $2$ & $21$ & $19$ & $2$ \\
$10$ & $20$ & $25$ & $23$ & $2$ & $23$ & $21$ & $2$ \\
$10$ & $24$ & $29$ & $27$ & $2$ & $27$ & $25$ & $2$ \\

$14$ & $16$ & $27$ & $25$ & $2$ & $25$ & $23$ & $2$ \\
$14$ & $18$ & $29$ & $27$ & $2$ & $27$ & $25$ & $2$ \\
$14$ & $20$ & $32$ & $30$ & $2$ & $30$ & $28$ & $2$ \\
$14$ & $24$ & $37$ & $35$ & $2$ & $35$ & $33$ & $2$ \\

$16$ & $18$ & $33$ & $31$ & $2$ & $31$ & $29$ & $2$ \\
$16$ & $20$ & $35$ & $33$ & $2$ & $33$ & $31$ & $2$ \\
$16$ & $24$ & $41$ & $39$ & $2$ & $39$ & $37$ & $2$ \\

$18$ & $20$ & $39$ & $37$ & $2$ & $37$ & $35$ & $2$ \\
$18$ & $24$ & $45$ & $43$ & $2$ & $43$ & $41$ & $2$ \\
$20$ & $24$ & $49$ & $47$ & $2$ & $47$ & $45$ & $2$ \\

\hline

$10$ & $22$ & $29$ & $25$ & $4$ & $27$ & $23$ & $4$\\
$14$ & $22$ & $37$ & $33$ & $4$ & $35$ & $31$ & $4$\\
$16$ & $22$ & $41$ & $37$ & $4$ & $39$ & $35$ & $4$\\
$18$ & $22$ & $45$ & $41$ & $4$ & $43$ & $39$ & $4$\\

$20$ & $22$ & $49$ & $45$ & $4$ & $47$ & $43$ & $4$\\
$22$ & $22$ & $57$ & $49$ & $8$ & $55$ & $48$ & $7$\\
$34$ & $34$ & $127$ & $109$ & $18$ & $125$ & $108$ & $17$\\

\end{tabular}
\\

La différence entre $W_{w_1,w_2}$ et $E_{w_1,w_2}$ donne les cas de surjectivité de l'application $\Phi_S$ que l'on peut restreindre aux parties paires et impaires.
Définissons :
\begin{align}
\Phi_S^{pair}: W_{w,w}^{pair}&\to \left(W_{w}/E_{w}\otimes W_{w}/E_{w}\right)\cap V_{w,w}^{pair},\\
\text{et }\Phi_S^{imp}: W_{w,w}^{imp}&\to \left(W_{w}/E_{w}\otimes W_{w}/E_{w}\right)\cap V_{w,w}^{imp}.
\end{align}
 Les espaces d'arrivée sont respectivement les parties globales paires et impaires de $W_{w_1}/E_{w_1}\otimes W_{w_2}/E_{w_2}$. 
Ils sont donc chacun de dimension $2\dim S_{k_1}\dim S_{k_2}$.\par

Au vue de la suite exacte (\ref{suiteex}), on observe donc dans la table que $\Phi_S^{pair}$ est toujours surjective. 
Alors que $\Phi_S^{imp}$ est surjective sauf lorsque $w_1=w_2$ et $S_{w_1+2}\neq 0$. 
Nous ne donnons pas de démonstration générale de la surjectivité à part celle observée dans la table.

\subsection{Démonstration du Théorème \ref{thm2}}

La surjectivité de l'application peut se traduire par des résultats d'irrationalité formulés dans le Théorème \ref{thm2}.\par
On rappelle que pour $f_{w_j}\in S_{w_j+2}$ une forme propre de Hecke, on écrit :
\begin{equation*}
P_{f_{w_j}}=\Omega_{w_j}^+P_{w_j}^++i\Omega_{w_j}^-P_{w_j}^-\text{ et }r_{w_j}=\Omega_{w_j}^+/\Omega_{w_j}^-.
\end{equation*}
On part alors des expressions:
\begin{align}
\Phi_S^{pair}(P_{f_{w_1},f_{w_2}}^+) &= \Omega_{w_1}^+\Omega_{w_2}^+ (P_{w_1}^+\otimes P_{w_2}^+) - \Omega_{w_1}^-\Omega_{w_2}^- (P_{w_1}^-\otimes P_{w_2}^-),\\
\text{et }\Phi_S^{imp}(P_{f_{w_1},f_{w_2}}^-) &= i\Omega_{w_1}^+\Omega_{w_2}^- (P_{w_1}^+\otimes P_{w_2}^-) + i\Omega_{w_1}^-\Omega_{w_2}^+ (P_{w_1}^-\otimes P_{w_2}^+).
\end{align}
On suppose par l'absurde $r_{w_1}r_{w_2}\in\Q$. On sait que $P_{w_1}^+\otimes P_{w_2}^+$ et $P_{w_1}^-\otimes P_{w_2}^-$ sont à coefficients rationnels et dans l'image de $\Phi_S$. Donc il existe $P_1, P_2\in W_{w_1,w_2}^{\Q,pair}$ tel que : $\Phi_S(P_1)=P_{w_1}^+\otimes P_{w_2}^+$ et $\Phi_S(P_2)=P_{w_1}^-\otimes P_{w_2}^-$. Posons alors $P=r_{w_1}r_{w_2}P_1-P_2\in W_{w_1,w_2}^{pair,\Q}$ de manière à avoir $\Phi_S(P) = \Omega_{w_1}^-\Omega_{w_2}^-\Phi_S(P_{f_{w_1},f_{w_2}}^+)$.
On en déduit que :
$$P_{f_{w_1},f_{w_2}}^+ \in \C P+\Ker (\Phi_S) =\C P +E_{w_1,w_2}.$$ 
Donc le $\Q$-espace $\Q P+E_{w_1,w_2}^{pair,\Q}$ est strictement contenu dans $W_{w_1,w_2}^{pair,\Q}$. Pourtant son extension au corps des complexes contient $\Per_{w_1,w_2}^{pair}$. 
Quitte à compléter par la partie impaire, ceci contre-dit la minimalité de $W_{w_1,w_2}^{\Q}$ donnée par le Théorème \ref{thm1}.\par

De même, si l'on suppose $w_1\neq w_2$ et $r_{w_1}/r_{w_2}\in\Q$ alors la surjectivité permet de construire un antécédent rationnel $P\in W_{w_1,w_2}^{\Q}$ tel que :
\begin{equation}
\Phi_S(P)=\frac{r_{w_1}}{r_{w_2}}(P_{w_1}^+\otimes P_{w_2}^-) + (P_{w_1}^-\otimes P_{w_2}^+)=i\Omega_{w_1}^-\Omega_{w_2}^+\Phi_S(P_{f_{w_1},f_{w_2}}^-).
\end{equation}
Le $\Q$-espace $\Q P+E_{w_1,w_2}^{imp,\Q}$ est strictement contenu dans $W_{w_1,w_2}^{imp,\Q}$ et vérifie $P_{f_{w_1},f_{w_2}}^-\in \C P+\Ker (\Phi_S)$. Ceci contredit à nouveau le Théorème \ref{thm1}.

\begin{rem}
On remarquera que le même raisonnement dans le cas du polynôme des périodes d'une seule forme donne le résultat trivial $\Omega_w^+/(i\Omega_w^-)=-ir_w\notin\Q$.
Et ainsi le Théorème \ref{thm2} fournit une motivation particulière à l'étude des polynôme des bi-périodes et à leurs généralisations pour un nombre quelconque de formes modulaires.
\end{rem}

Le cas de non-surjectivité est naturel. On peut le voir comme conséquence du Théorème \ref{thm1} et on obtient le résultat général:

\begin{prop}
Soit $w$ un poids tel que $S_{w+2}\neq 0$ alors $\Phi_S^{imp}$ n'est pas surjective.
\end{prop}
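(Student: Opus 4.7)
My plan is to compute $\Phi_S$ explicitly on the bi-periods and identify a specific direction in the target that is never attained, using Theorem~\ref{thm1} to control what sits in $W^{imp}_{w,w}$.

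First, from the cycle identity $[(1,1)+(S,S)]\tau_2 = \tau_1 \times \tau_1$ (already established in the proof of the decomposition proposition earlier in this section), integration of $\omega_{f_1}\wedge\omega_{f_2}$ immediately yields
$$\Phi_S(P_{f_1,f_2}) \;=\; \int_{\tau_1\times\tau_1}\omega_{f_1}(z_1)\wedge\omega_{f_2}(z_2) \;=\; P_{f_1}(X_1)\,P_{f_2}(X_2)$$
in $(W_w/E_w)^{\otimes 2}$. Expanding via the Eichler--Shimura decomposition $P_{f_j}=\Omega_{f_j}^+P_{f_j}^+ + i\Omega_{f_j}^-P_{f_j}^-$ and extracting the parity-odd component, one obtains
$$\Phi_S^{imp}(P_{f_1,f_2}^-) \;=\; i\,\Omega_{f_1}^+\Omega_{f_2}^-\,P_{f_1}^+\otimes P_{f_2}^- \;+\; i\,\Omega_{f_1}^-\Omega_{f_2}^+\,P_{f_1}^-\otimes P_{f_2}^+. \qquad (\diamond)$$

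Second, by Theorem~\ref{thm1}, the space $W_{w,w}^{\Q}$ is generated over $\Q$ by $E_{w,w}^{\Q}$ together with rational ``shadows'' of the bi-periods: concretely, the $\Q$-coefficients of the $P_{f_i,f_j}$ with respect to a $\Q$-basis of the subalgebra of $\C$ generated by the period products $\Omega_{f_i}^{\epsilon}\Omega_{f_j}^{\epsilon'}$. Since $\Phi_S^{\Q}$ kills $E^{\Q}$ and is $\Q$-linear, the image $\Phi_S^{imp}(W_{w,w}^{imp,\Q})$ is the $\Q$-span of the ``shadows'' of the right-hand side of $(\diamond)$ for every pair $(i,j)$.

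Third, I exhibit the missing direction. Setting $f_1=f_2=f$ in $(\diamond)$, the diagonal contributions are
$$\Phi_S^{imp}(P_{f,f}^-)\;=\;i\,\Omega_f^+\Omega_f^-\,\bigl(P_f^+\otimes P_f^- + P_f^-\otimes P_f^+\bigr),$$
a purely $\sigma$-symmetric combination (with $\sigma$ swapping tensor factors). The $\sigma$-antisymmetric combination $P_f^+\otimes P_f^- - P_f^-\otimes P_f^+$ cannot arise from such a diagonal shadow, nor from any off-diagonal shadow: for $i\ne j$, the two distinct period products $\Omega_i^+\Omega_j^-$ and $\Omega_i^-\Omega_j^+$ yield the shadows $P_i^+\otimes P_j^-$ and $P_i^-\otimes P_j^+$ separately, producing only pure tensors $P_i^\epsilon\otimes P_j^{-\epsilon}$ with $i\ne j$. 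Summed over a Hecke eigenbasis, the Galois-invariant direction $\sum_i(P_i^+\otimes P_i^- - P_i^-\otimes P_i^+)$ thus gives a nonzero rational element of $(W_w^{\Q}/E_w^{\Q})^{\otimes 2,\,imp}$ that lies outside the image, proving non-surjectivity.

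The main obstacle is making the ``shadow'' bookkeeping rigorous: one needs the $\Q$-linear independence of the relevant period products, which is a standard (conjectural in general, established in the Hecke-orthogonal form here by Shimura) transcendence-type input. However, for the mere statement of non-surjectivity, it suffices to observe that $(\diamond)$ exhibits every image element as a linear combination of $P_i^+\otimes P_j^-$ and $P_i^-\otimes P_j^+$ with paired coefficients $\Omega_i^+\Omega_j^-$ and $\Omega_i^-\Omega_j^+$, a constraint incompatible with the antisymmetric diagonal class above.
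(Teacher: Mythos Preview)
Your computation of $\Phi_S^{imp}(P_{f,f}^-)$ and the observation that it is symmetric in the tensor factors are correct, and this is exactly the starting point of the paper's proof. The divergence, and the gap, is in how you pass from ``the image of $\Per^{imp}$ misses the antisymmetric direction'' to ``the image of $W^{imp}$ misses it''. You do this by interpreting Theorem~\ref{thm1} as saying that $W^{\Q}$ is spanned by $E^{\Q}$ together with ``rational shadows'' of the $P_{f_i,f_j}$, and then you need the period products $\Omega_i^{+}\Omega_j^{-}$ to be $\Q$-linearly independent so that those shadows are exactly the pure tensors $P_i^+\otimes P_j^-$. You acknowledge this yourself as the main obstacle, and it is a real one: such independence statements are precisely the kind of result that Theorem~\ref{thm2} of the paper \emph{derives} from this machinery, so they cannot be assumed here. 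Your last paragraph does not repair this, because formula~$(\diamond)$ constrains only $\Phi_S^{imp}$ on $\Per^{imp}$, not on all of $W^{imp}$; the smallest $\Q$-space whose complexification contains a $\C$-line can very well be the whole plane.

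The paper avoids this transcendence input entirely by arguing by contradiction through the minimality in Theorem~\ref{thm1}. Assume $\Phi_S^{imp}$ surjective; since the map is defined over $\Q$ and $P_f^{+}\otimes P_f^{-}$, $P_f^{-}\otimes P_f^{+}$ are rational (take $f$ with rational Fourier coefficients), choose rational preimages $P_1,P_2$, linearly independent because their images are. Now replace $P_1,P_2$ by $P_1+P_2$ in a $\Q$-basis of $W_{w,w}^{\Q}$ extending a basis of $E_{w,w}^{\Q}$: the resulting $W'^{\Q}$ is strictly smaller, still contains $E^{\Q}$, and still satisfies $\Per_{w,w}\subset W'\otimes\C$ because $\Phi_S^{imp}(P_{f,f}^-)$ is a scalar multiple of $\Phi_S^{imp}(P_1+P_2)$, so $P_{f,f}^-\in\C(P_1+P_2)+E^{imp}\subset W'$. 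This contradicts the minimality of $W_{w,w}^{\Q}$, with no appeal to any independence of periods. The moral: rather than trying to compute $\Phi_S^{imp}(W^{imp,\Q})$ explicitly, use the fact that if it were the whole target you could build a smaller $\Q$-structure still trapping $\Per$.
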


\begin{proof}
Pour toute forme $f\in S_{w+2}$ propre pour les opérateurs de Hecke. On a :
\begin{equation}
\Phi_S(P_{f,f})= P_{f} \otimes P_{f} = 
\left(\Omega_f^+\right)^2 (P_f^+\otimes P_f^+) 
-\left(\Omega_f^-\right)^2 (P_f^-\otimes P_f^-)
+i\Omega_f^+\Omega_f^- (P_f^+\otimes P_f^- + P_f^-\otimes P_f^+).
\end{equation}
Donc $\Phi_S^{imp}(P_{f,f}^-)=i\Omega_f^+\Omega_f^-(P_f^+\otimes P_f^- + P_f^-\otimes P_f^+)$.
Or si l'on suppose $\Phi_S^{imp}$ surjective alors il existe $P_1,P_2\in W_{w,w}^{imp}$ tels que $\Phi_S^{imp}(P_1)=P_f^+\otimes P_f^-$ et $\Phi_S^{imp}(P_2)=P_f^+\otimes P_f^-$. 
On a $P_1$ et $P_2$ linéairement indépendant et :
\begin{equation*}
\Phi_S^{imp}(P_1+P_2)=P_f^+\otimes P_f^- + P_f^-\otimes P_f^+=\Phi_S^{imp}(P_{f,f}^-)/\left(i\Omega_f^+\Omega_f^-\right).
\end{equation*} 
Donc en remplaçant $P_1$ et $P_2$ par $P_1+P_2$ dans une famille génératrice sur $\Q$ de $W_{w,w}^{\Q}$, on obtiendrait un espace strictement plus petit et vérifiant le Théorème \ref{thm1}. Ce qui est absurde.
\end{proof}

\bibliography{bibmod}

\begin{thebibliography}{10}
  \providebibliographyfont{name}{}%
  \providebibliographyfont{lastname}{}%
  \providebibliographyfont{title}{\emph}%
  \providebibliographyfont{jtitle}{\btxtitlefont}%
  \providebibliographyfont{etal}{\emph}%
  \providebibliographyfont{journal}{}%
  \providebibliographyfont{volume}{}%
  \providebibliographyfont{ISBN}{\MakeUppercase}%
  \providebibliographyfont{ISSN}{\MakeUppercase}%
  \providebibliographyfont{url}{\url}%
  \providebibliographyfont{numeral}{}%
  \expandafter\btxselectlanguage\expandafter {\btxfallbacklanguage}

\expandafter\btxselectlanguage\expandafter {\btxfallbacklanguage}
\bibitem {Dr1}
\btxnamefont {V.\btxfnamespacelong G. \btxlastnamefont
  {Drinfeld}}\btxauthorcolon\ \btxjtitlefont {\btxifchangecase {Two theorems on
  modular curves}{Two theorems on modular curves}}.
\newblock \btxjournalfont {Func. An. and its Applications}, 7:2, 1973.

\bibitem {Eich57}
\btxnamefont {M. \btxlastnamefont {Eichler}}\btxauthorcolon\ \btxjtitlefont
  {\btxifchangecase {Eine verallgemeinerung der abelschen integrale}{Eine
  Verallgemeinerung der Abelschen Integrale}}.
\newblock \btxjournalfont {Mathematische Zeitschrift}, 67(1):267--298,
  1957\ifbtxprintISSN {, \mbox{\btxISSN~\btxISSNfont {0025-5874}}}.

\bibitem {Hab83}
\btxnamefont {K. \btxlastnamefont {Haberland}}\btxauthorcolon\ \btxjtitlefont
  {\btxifchangecase {Perioden von modulformen einer variabler und
  gruppencohomologie, i}{Perioden von Modulformen einer Variabler und
  Gruppencohomologie, I}}.
\newblock \btxjournalfont {Mathematische Nachrichten}, 112(1):245--282, 1983.

\bibitem {Hat}
\btxnamefont {A. \btxlastnamefont {Hatcher}}\btxauthorcolon\ \btxtitlefont
  {Algebraic Topology}.
\newblock \btxpublisherfont {Cambridge University Press}, 2002.

\bibitem {Burnside}
\btxnamefont {V. \btxlastnamefont {Lomonosov}} \btxandlong {}\ \btxnamefont {P.
  \btxlastnamefont {Rosenthal}}\btxauthorcolon\ \btxjtitlefont
  {\btxifchangecase {The simplest proof of burnside's theorem on matrix
  algebras}{The simplest proof of Burnside's theorem on matrix algebras}}.
\newblock \btxjournalfont {Linear Algebra and its Applications}, 383:45 -- 47,
  2004\ifbtxprintISSN {, \mbox{\btxISSN~\btxISSNfont {0024-3795}}}.

\bibitem {Ma2}
\btxnamefont {Y.\btxfnamespacelong I. \btxlastnamefont {Manin}}\btxauthorcolon\
  \btxjtitlefont {\btxifchangecase {Parabolic points and zeta-functions of
  modular curves}{Parabolic points and zeta-functions of modular curves}}.
\newblock \btxjournalfont {USSR Izvestija, publ. by AMS, vol. 6, No 1}, 1972.

\bibitem {Ma1}
\btxnamefont {Y.\btxfnamespacelong I. \btxlastnamefont {Manin}}\btxauthorcolon\
  \btxtitlefont {\btxifchangecase {Iterated integrals of modular forms and
  noncommutative modular symbols}{Iterated integrals of modular forms and
  noncommutative modular symbols}}.
\newblock \Btxinlong {}\ \btxtitlefont {Algebraic Geometry and Number Theory},
  \btxvolumelong {}\ \btxvolumefont {253} \btxofserieslong {}\ \btxtitlefont
  {Progress in Mathematics}, \btxpageslong {}\ 565--597. \btxpublisherfont
  {Birkh?user}, 2006.

\bibitem {Maz73}
\btxnamefont {B. \btxlastnamefont {Mazur}}\btxauthorcolon\ \btxtitlefont
  {\btxifchangecase {Courbes elliptiques et symboles modulaires}{Courbes
  Elliptiques et Symboles Modulaires}}.
\newblock \Btxinlong {}\ \btxtitlefont {Séminaire Bourbaki vol. 1971/72 Exposés
  400-417}, \btxvolumelong {}\ \btxvolumefont {317} \btxofserieslong {}\
  \btxtitlefont {Lecture Notes in Mathematics}, \btxpageslong {}\ 277--294.
  \btxpublisherfont {Springer Berlin Heidelberg}, 1973.

\bibitem {Merel1991}
\btxnamefont {L. \btxlastnamefont {Merel}}\btxauthorcolon\ \btxjtitlefont
  {\btxifchangecase {Opérateurs de hecke pour $\gamma_0(n)$ et fractions
  continues}{Opérateurs de Hecke pour $\Gamma_0(N)$ et fractions continues}}.
\newblock \btxjournalfont {Annales de l'institut Fourier}, 41(3):519--537,
  1991.

\bibitem {Rham}
\btxnamefont {G. de~\btxlastnamefont {Rham}}\btxauthorcolon\ \btxjtitlefont
  {\btxifchangecase {Intégrales harmoniques et théorie des
  intersections}{Intégrales harmoniques et théorie des intersections}}.
\newblock \btxjournalfont {Proceedings of the International Congress of
  Mathematicians}, 2:209--215, 1950.

\bibitem {Sh59}
\btxnamefont {G. \btxlastnamefont {Shimura}}\btxauthorcolon\ \btxjtitlefont
  {\btxifchangecase {Sur les int\'egrales attach\'ees aux formes
  automorphes.}{Sur les int\'egrales attach\'ees aux formes automorphes.}}
\newblock \btxjournalfont {Journal of the Mathematical Society of Japan},
  11(4):291--311, \btxprintmonthyear{.}{10}{1959}{long}.

\bibitem {Za91}
\btxnamefont {D. \btxlastnamefont {Zagier}}\btxauthorcolon\ \btxjtitlefont
  {\btxifchangecase {Periods of modular forms and jacobi theta
  functions}{Periods of modular forms and Jacobi theta functions}}.
\newblock \btxjournalfont {Inventiones mathematicae}, 104(1):449--465, 1991.

\end{thebibliography}

\bigskip
\textbf{Contact E-mail:} \textsf{provostmath@gmail.com}
\end{document}